\documentclass[11pt]{amsart}

\usepackage[utf8]{inputenc}
\usepackage[T1]{fontenc}
\usepackage[english]{babel}
\usepackage{mathtools}

\usepackage{amsmath, amsfonts, amssymb, amsthm, mathtools, bbm}
\usepackage{tensor}

\usepackage[mathscr]{eucal}
\usepackage{relsize}

\usepackage{graphicx}
\usepackage{xcolor}
\usepackage[a4paper, twoside=false, vmargin={2cm,2.5cm}, includehead]{geometry}
\usepackage{tikz-cd}

\usepackage{enumitem}
\setlist{nolistsep}

\usepackage[linktocpage=true]{hyperref}
\usepackage[capitalize]{cleveref}

\usepackage{pdfpages}
\usepackage[official]{eurosym}
\usepackage[stable]{footmisc}
\usepackage{comment}
\usepackage{cancel}
\usepackage{pgfgantt}
\usepackage{fancyhdr}

\newtheoremstyle{plain}{3mm}{3mm}{\slshape}{}{\bfseries}{.}{.5em}{}
\newtheoremstyle{definition}{2mm}{2mm}{}{}{\bfseries}{.}{.5em}{}

\theoremstyle{plain}

\newtheorem{Theorem}{Theorem}[section]
\newtheorem{Lemma}[Theorem]{Lemma}
\newtheorem{Proposition}[Theorem]{Proposition}
\newtheorem{Corollary}[Theorem]{Corollary}

\newtheorem{Conjecture}[Theorem]{Conjecture}
\newtheorem{Problem}[Theorem]{Problem}

\theoremstyle{definition}
\newtheorem{Definition}[Theorem]{Definition}
\newtheorem{Remark}[Theorem]{Remark}

\newtheorem{Claim}[Theorem]{Claim}

\newtheorem{innercustomthm}{Theorem}
\newenvironment{customthm}[1]
{\renewcommand\theinnercustomthm{#1}\innercustomthm}
{\endinnercustomthm}

\numberwithin{equation}{section}

\ganttset{
  group/.append style={orange},
  milestone/.append style={red},
  progress label node anchor/.append style={text=red}
}

\newcommand{\Oh}{{\rm O}}
\newcommand{\oh}{{\rm o}}
\newcommand{\e}{\varepsilon}

\newcommand{\norm}[1]{\left\Vert #1 \right\Vert}
\newcommand{\nnorm}[1]{\lvert\!|\!| #1 |\!|\!\rvert}

\newcommand{\N}{\mathbb{N}}
\newcommand{\Z}{\mathbb{Z}}
\newcommand{\Q}{\mathbb{Q}}
\newcommand{\R}{\mathbb{R}}
\newcommand{\C}{\mathbb{C}}

\newcommand{\I}{\mathcal{I}}

\newcommand{\X}{\mathcal{X}}
\renewcommand{\L}{\mathscr{L}}

\newcommand{\CC}{\mathcal{C}}

\newcommand{\CF}{\mathcal F}
\newcommand{\CH}{\mathcal H}

\newcommand{\CP}{\mathcal P}
\newcommand{\CQ}{\mathcal Q}
\newcommand{\CR}{\mathcal R}
\newcommand{\CS}{\mathcal S}

\newcommand{\CY}{\mathcal Y}
\newcommand{\CZ}{\mathcal Z}

\newcommand{\Hilb}{\mathscr{H}}

\newcommand{\1}{\mathbbm{1}}

\newcommand{\xm}{(X,\mu)}
\newcommand{\xmt}{(X,\mu,T)}
\newcommand{\xms}{(X,\mu,S)}

\newcommand{\E}{\raisebox{-0.4ex}{\scalebox{1.4}[1.3]{$\mathbb{E}$}}}
\newcommand{\logE}{\E^{\log}}

\renewcommand{\P}{\mathbb{P}}

\renewcommand{\epsilon}{\varepsilon}

\newcommand{\Loc}{\textup{Loc}}

\newcommand{\wt}{\widetilde}

\def \colon{{:}\;}

\newcommand{\floor}[1]{{\left \lfloor #1 \right \rfloor}}

\newcommand{\Cesaro}{Ces\`{a}ro}

\newcommand{\Folner}{F\o{}lner}

\newcommand{\Katai}{K\'{a}tai}
\newcommand{\Mobius}{M\"{o}bius}

\newcommand{\Turan}{Tur{\'a}n}

\newcommand{\Halasz}{Hal\'{a}sz}

\usepackage[normalem]{ulem}

\title{Multiple ergodic averages for commuting multiplicative actions}

\author{Dimitrios Charamaras}
\address[Dimitrios Charamaras]{Centre de recherche math\'ematiques\\
Universit\'e de Montr\'eal, Montr\'eal, QC H3T 1J4, Canada}
\email{dimitrios.charamaras@umontreal.ca}

\author{Andreas Koutsogiannis}
\address[Andreas Koutsogiannis]{
Department of Mathematics, Aristotle University of Thessaloniki, Thessaloniki 54124, Greece}
\email{akoutsogiannis@math.auth.gr}

\author{Konstantinos Tsinas}
\address[Konstantinos Tsinas]{Institute of Mathematics, Ecole Polytechnique F\'{e}d\'{e}rale de Lausanne (EPFL), Lausanne 1015,
Switzerland}
\email{konstantinos.tsinas@epfl.ch}

\date{}

\thanks{This work was initiated while the first author was supported by the Swiss National Science Foundation grant TMSGI2-211214, and it was completed while he was supported by the CRM-ISM Postdoctoral Fellowship and the Fondation Courtois through the Chaire Courtois en recherche fondamentale - II.
For the second author, the research project is implemented in the framework of H.F.R.I call ``3rd Call for H.F.R.I.’s
Research Projects to Support Faculty Members \& Researchers'' (H.F.R.I. Project Number: 24979).  
The third author was supported by the Swiss National Science Foundation grant TMSGI2-211214.
For the purpose of Open Access, the authors have applied a CC-BY public copyright license to any Author Accepted Manuscript (AAM) version arising from this submission.}

\subjclass{Primary: 37A44; Secondary: 37A30, 28D05}

\keywords{Multiplicative actions, multiple ergodic averages, \Katai's orthogonality criterion, box seminorms, magic extensions.}

\begin{document}

\begin{abstract} 
We study the convergence of multiple ergodic averages involving several commuting multiplicative actions. We prove that for finitely generated systems, the averages 
$$\frac{1}{N}\sum_{n=1}^N S_{1,n}F_1\cdot\ldots\cdot  S_{\ell,n}F_\ell$$
converge in norm, settling a conjecture of Frantzikinakis.
 
Our methods rely on a novel generalization of \Katai's orthogonality criterion that allows us to obtain box seminorm control, the machinery of magic extensions originating in the work of Host, and a delicate induction on the complexity of the initial averages that ultimately reduces our problem to well-known mean convergence results.
\end{abstract}

\maketitle


\section{Introduction}\label{section_intro}

\subsection{History}

The study of the limiting behavior of multiple ergodic averages has been a major field of study originating from Furstenberg's influential work on Szemer\'{e}di's theorem. In \cite{Furstenberg-original}, Furstenberg derived some information on the limiting behavior of the averages \begin{equation}\label{eq_Furstenberg averages}
    \frac{1}{N}\sum_{n=1}^N \int_X F_0\cdot T^nF_1\cdot T^{2n}F_2\cdot\ldots\cdot T^{\ell n}F_\ell\,d\mu,
\end{equation} 
where $\ell$ is a positive integer, $T$ is an invertible measure-preserving map acting on a probability space $(X,\X,\mu)$ and $F_1,\ldots, F_\ell$ are 1-bounded measurable functions. Note that the transformation induces
an action of the group $(\Z,+)$ on $X$ by $(n,x)\mapsto T^nx$.

Restricting to $F_0=\ldots= F_\ell=\1_{A}$ for some $A\in \X$ with $\mu(A)>0$, Furstenberg's multiple recurrence result is the assertion that the limit inferior of the averages in \eqref{eq_Furstenberg averages} is positive, a fact that can be shown to be equivalent to Szemer\'{e}di's theorem on the existence of arbitrarily long arithmetic progressions in large subsets of integers.

Strictly speaking, the methods introduced in \cite{Furstenberg-original} could not establish whether the averages
\begin{equation}\label{eq_Furstenberg averages 2}
     \frac{1}{N}\sum_{n=1}^N  T^nF_1\cdot T^{2n}F_2\cdot\ldots\cdot T^{\ell n}F_\ell
\end{equation}
converge in $L^2(\mu)$, at least for $\ell\geq 3$.\footnote{The pointwise convergence of these averages is a notorious open problem, only known in the case $\ell=2$ by work of Bourgain \cite{Bourgain_pointwise}.} The mean convergence of the averages was a major open problem until 2005, when it was resolved by Host and Kra \cite{Host-Kra-annals} (building on work of Conze and Lesigne \cite{Conze_Lesigne}, who verified the conjecture for $\ell=3$ and totally ergodic systems) with a different proof by Ziegler \cite{Ziegler_jams}.
The highlight of the newly introduced tools was not only the proof of the convergence, but the introduction of some factors of the system $(X,\mu,T)$ that
control the behavior of the averages \eqref{eq_Furstenberg averages 2} and which have an explicit algebraic structure. This allows the calculation of the limit (carried out in \cite{Ziegler_jams}) of the averages in \eqref{eq_Furstenberg averages 2}. Moreover, the methods extend to averages for $\Z$-actions with iterates involving more complicated sequences, such as polynomials \cite{Leibman-several}, functions arising from Hardy fields \cite{Fra-Hardy-singlecase}
and more (see \cite{Fra-open} for the history of these problems).

Multidimensional analogues of Szemeredi's theorem and other similar combinatorial problems necessitate the study of multiple ergodic averages involving several commuting transformations. For instance, a natural question is whether or not the multiple ergodic averages 
\begin{equation}\label{E:T1}
    \frac{1}{N}\sum_{n=1}^N T_1^n F_1\cdot\ldots\cdot T_\ell^n F_\ell
\end{equation}
converge, where $T_1,\ldots, T_\ell$ are (invertible) commuting (i.e., $T_iT_j=T_jT_i$ for all $i,j\in [\ell]$) measure preserving transformations acting on a probability space $(X,\X,\mu)$. Note that such a collection of transformations $T_1,\ldots, T_\ell$ induces a $(\Z^\ell,+)$-action on $X$.
Here, the situation is radically different. The Host-Kra factors cannot control the behavior of these averages. In fact, simple examples show that any factor controlling the behavior of an average as in \eqref{E:T1} could contain any possible measure-preserving system.  
Nonetheless, the norm-convergence problem was fully resolved by Tao \cite{Tao-L^2commuting} using a transference argument to finitistic objects and then establishing a finite convergence result. This proof was later translated to ergodic theoretic language by Towsner \cite{Towsner} using non-standard methods. Later, Austin \cite{Austin_normconvergence} gave a genuinely ergodic theoretic proof, followed by Host \cite{Host-L^2-commuting-linearcase}, who gave another proof inspired by Austin's arguments. A common idea in these last two articles was the construction of an extension of the original system, wherein certain factors that control the averages admit a simple description that reduces the complexity of the problem. Since the averages converge for all functions in the larger system, the convergence result in the initial system follows.
The drawback of all these approaches is that they provide no description of the limit, unlike the case of a single transformation.
Still, these methods provide a simpler proof of the convergence of the averages \eqref{eq_Furstenberg averages 2}.

\subsection{Multiple ergodic averages in the multiplicative setting}

We discussed the case of $\Z^k$-actions thus far, though all questions can be recast in the setting of a general abelian group (or semigroup) action. Of particular interest is the case of a semigroup action by $(\N,\times)$. A \emph{multiplicative system} is a quadruple $(X,\X,\mu,S)$,
where $(X,\X,\mu)$ is a probability space and $S=(S_n)_{n\in\N}$ is a sequence of measure-preserving transformations satisfying the relation 
\begin{equation*}
    S_{nm}=S_n\circ S_m
\end{equation*}
for any $m,n\in \N$. Note that the previous relation implies that the maps $S_n$ are pairwise commuting. If we assume that the transformations $S_n$ are invertible, then they naturally induce a  $(\Q_{+},\times)$-action on $(X,\X,\mu)$ by $S_{n/m}=S_n\circ S_m^{-1}$.

Simple examples of multiplicative systems include:

i) \emph{rotations by multiplicative functions}: if $f\colon\N\to \mathbb{S}^1$ is a completely multiplicative function, then we define the map $S_n:\mathbb{S}^1\to \mathbb{S}^1$ by $S_nz=f(n)z$, which preserves the Lebesgue measure on $\mathbb{S}^1$.

ii) \emph{$(\Z,+)$ actions evaluated at completely additive sequences}: if $(X,\X,\mu,T)$ is a $(\Z,+)$-system and $a:\N\to \N$ is a completely additive function, then the maps $S_n=T^{a(n)}$ yield a multiplicative action on $(X,\X,\mu)$.

iii) \emph{dilations on the torus}: Given a positive integer $k$, the maps $S_nz=z^{n^k}$ preserve the Lebesgue measure and form a multiplicative 
action.

iv) any sequence of transformations $T_p$ indexed by the prime numbers $p\in \P$ gives a multiplicative action by defining $S_{p^k}=\underbrace{T_p\circ\ldots\circ T_p}_{\text{$k$ terms}}$
and $S_{p_1^{a_1}\ldots p_k^{a_k}}=S_{p_1^{a_1}}\circ\ldots\circ S_{p_k^{a_k}}$.

Multiplicative systems have recently gained prominence due to their connections to partition regularity results for homogeneous quadratic forms \cite{Fra-Klu-Mor}, \cite{fra-klu-mor-2}, \cite{Fra_survey_mult}, \cite{Fra_Mount}. They arise naturally in combinatorial problems that involve sets that are multiplicatively large. Recurrence results for multiplicative actions have been studied \cite{Moreira&friends} (see also \cite{CMT25} and \cite{tafula_sunkai}), though results in this field have been sparse.

We remark that the natural way to study multiplicative actions
is through multiplicative F{\o}lner sequences. Namely, these are sequences $\Phi_N\subseteq \N$ that are approximately dilation invariant, in the sense that \begin{equation*}
    \lim_{N\to\infty} \frac{|\Phi_N\cap \Phi_N/a|}{|\Phi_N|}=1
\end{equation*}for every $a\in \N$. Then, we define the ergodic averages \begin{equation*}
    \frac{1}{|\Phi_N|}\sum_{n\in \Phi_N} S_nF
\end{equation*}which converge by the mean ergodic theorem for amenable group actions (\cite[Theorem~3.33]{Glasner_book_2003}). Using a multiplicative averaging scheme, some results from the additive setting admit analogs in the multiplicative setting. Still, the structure theory of Host-Kra factors has not been adapted to multiplicative actions (it is restricted in the general case to finitely generated abelian group actions \cite{Griesmer-thesis} or to lower-order cases \cite{Asgar&friends}) and most results involving multiple averages remain unknown.

Nonetheless, it is an interesting question whether multiplicative actions behave nicely under standard Ces\`{a}ro averages. For example, all results in \cite{Fra-Klu-Mor} or \cite{Moreira&friends} involve additive averages of multiplicative objects.
The simplest possible question that can be formulated in this context is whether the averages 
\begin{equation*}
    \frac{1}{N}\sum_{n\leq N} S_nF
\end{equation*}
converge in any multiplicative system $(X,\X,\mu,S)$ and for any $F\in L^{\infty}(\mu)$.

Specializing to the case of rotations by multiplicative functions, the problem reduces to the study of mean values of 1-bounded completely multiplicative functions \begin{equation*}
    \frac{1}{N}\sum_{n\leq N}f(n).
\end{equation*} This question was resolved in full generality by \Halasz{} \cite{Halasz} (the case of real valued multiplicative functions was solved by earlier work of Wirsing \cite{Wirsing1, Wirsing2}). The case where $f$ is the Liouville function, where the mean value is zero, is equivalent to the prime number theorem.
However, not all completely multiplicative functions have a mean value, and this readily implies that the averages do not necessarily converge in mean in a general multiplicative system.
For instance, if we consider the rotation by the multiplicative function $f(n)=n^i$ and the function $F(z)=z, z\in \mathbb{S}^1$, we deduce that 
\begin{equation*}
      \frac{1}{N}\sum_{n\leq N} S_nF(z)=  \frac{1}{N}\sum_{n\leq N} n^iz=\frac{N^i}{1+i}z+\oh_{N\to\infty}(1),
\end{equation*}
which does not converge in norm.

In \cite{Charamaras-multiplicative}, the first author overcame this problem by restricting attention to a special class of systems. A multiplicative system $(X,\X,\mu, S)$ is called \emph{finitely generated} if the set $\{S_p\colon p\in \P\}$ is finite.
\begin{customthm}{A}[{\cite[Corollary 1.9]{Charamaras-multiplicative}}]
\label{thm_PMET}
    Let $(X,\X,\mu,S)$ be a finitely generated multiplicative system. Then, for any $F\in L^2(\mu)$, the averages \begin{equation*}
        \frac{1}{N}\sum_{n= 1}^N S_nF
    \end{equation*}
    converge in $L^2(\mu)$.
\end{customthm}
Theorem A is a consequence of the Pretentious Mean Ergodic Theorem \cite[Theorem A]{Charamaras-multiplicative}, an analogue of the von Neumann mean ergodic theorem for finitely generated multiplicative systems, which also gives an explicit description of the limit.

Our main result is an extension of \cref{thm_PMET} to the setting of multiple ergodic averages. Here, we say that the tuple $(X,\X,\mu,S_1,\ldots,S_\ell)$ is a \emph{(finitely generated) multiplicative system} if each $(X,\X,\mu,S_i),$ $i\in[\ell]$ is a (finitely generated) multiplicative system. The system is \emph{commuting} if $S_{i,n}S_{j,m}=S_{j,m}S_{i,n}$ for all $n,m\in\N, i,j \in [\ell].$

\begin{Theorem}\label{main theorem}
    Let $\ell\in\N$ and $(X,\X,\mu,S_1,\ldots,S_\ell)$ be a commuting finitely generated multiplicative system. Then, for any functions $F_1,\ldots,F_\ell\in L^\infty(\mu)$, the averages
    \begin{equation}\label{main average general}
       \frac{1}{N}\sum_{n=1}^N S_{1,n}F_1\cdot\ldots\cdot S_{\ell,n}F_\ell
    \end{equation}
    converge in $L^2(\mu)$.
\end{Theorem}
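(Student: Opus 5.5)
The natural starting point is to reduce to a structured setting using finite generation. Since each $S_i$ is finitely generated, the set of all maps $\{S_{i,p}\colon i\in[\ell],\, p\in\P\}$ is finite; call its elements $T_1,\ldots,T_k$. Because the system is commuting, they generate a $\Z^k$-action (after passing to an invertible extension if necessary). For each $i\in[\ell]$ and $j\in[k]$, the set of primes $p$ with $S_{i,p}=T_j$ gives a partition of $\P$ into $k$ classes $\P_{i,1},\ldots,\P_{i,k}$. Let $\Omega_{i,j}(n)$ be the number of prime factors of $n$, counted with multiplicity, that lie in $\P_{i,j}$. Then
\[
S_{i,n}=T_1^{\Omega_{i,1}(n)}\cdots T_k^{\Omega_{i,k}(n)}.
\]
So the average \eqref{main average general} becomes a multiple average for a $\Z^k$-action evaluated along the vector-valued completely additive functions $\vec\Omega_i(n)$. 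The plan is to prove convergence by induction on a complexity parameter for the tuple $(\vec\Omega_1,\ldots,\vec\Omega_\ell)$, reducing at the end to the known mean convergence for commuting linear averages (Tao, Austin, Host) together with \cref{thm_PMET}.

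The first key step is seminorm control: bound $\limsup_N\|\frac1N\sum_n\prod_i S_{i,n}F_i\|_2$ by a box seminorm of $F_\ell$ along suitable subgroups of $\Z^k$. For this I would prove a generalized \Katai{} orthogonality criterion. If $a(n)$ is a bounded sequence of functions in $L^2$ such that, for distinct primes $p,q$ in fixed classes, the correlations $\frac1N\sum_n\langle a(pn),a(qn)\rangle$ are small, then $\frac1N\sum_n a(n)$ is small. Since $S_{i,pn}=S_{i,p}S_{i,n}$, applying the criterion turns the averages over $pn$ and $qn$ into the same type of average with the functions shifted by $S_{i,p}$ and $S_{i,q}$, which are among the generators $T_j$. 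Iterating this (a Cauchy–Schwarz/van der Corput style scheme) produces control by a box seminorm of $F_\ell$ along directions of the form $S_{\ell,p}S_{i,p}^{-1}S_{i,q}S_{\ell,q}^{-1}$, with only finitely many distinct directions. The main subtlety is choosing the class pairs $(p,q)$ so that the primes in each class have positive relative density. I would restrict to classes with divergent $\sum 1/p$ and handle the other classes through the pretentious decomposition used in \cite{Charamaras-multiplicative}.

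The second step uses Host's magic extensions. I would pass to an extension in which the box-seminorm factor controlling $F_\ell$ is generated by functions invariant under one of the relevant subgroup actions, or by products of such functions. For such functions, some $S_{\ell,n}$ composed with the corresponding invariant directions acts like another $S_{i,n}$, or trivially on a subgroup. This lets the term $S_{\ell,n}F_\ell$ merge with, or be absorbed into, the other terms, lowering the complexity: fewer distinct actions, or fewer "generic" generator classes. Repeating this, everything reduces either to the single-action case, handled by \cref{thm_PMET}, or to averages where the distribution of $\vec\Omega(n)$ is controlled. In the latter case one uses that $\vec\Omega(n)$ is approximately Gaussian in the relevant sense, and that convergence of $\frac1N\sum_n g(\vec\Omega(n))$ for polynomial-phase/nilsequence-type $g$ follows from \Halasz-type mean value results.

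I expect the main obstacle to be designing the complexity measure so that the magic-extension reduction strictly decreases it while keeping the system finitely generated. The seminorm directions from the \Katai{} step must also match exactly the subgroups whose invariant factors the magic extension provides. I would try first the lexicographic order on (number of actions, rank of the subgroup generated by differences $S_{i,p}S_{j,p}^{-1}$).
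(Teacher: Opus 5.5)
\paragraph{The seminorm-control step.} The gap is here: a \Katai-type criterion over single primes cannot give box-seminorm control for these averages.

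Refine your classes to a common partition. This is the paper's reduction to nice $a_j$ with $\CP(a_j)$ large, where thin classes are removed by passing to $\CP$-free integers via \Mobius{} inversion. After this, every prime $p$ in a class $\CP(a_j)$ acts on all actions by the same fixed element: $S_{i,pn}=T_{i,j}S_{i,n}$ for every $i$.

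The \Turan--Kubilius plus Cauchy--Schwarz argument forces $p$ and $q$ to run over the same set of primes with weights $1/(pq)$. So you cannot keep only cross-class pairs. With $J$ classes of relative logarithmic weights $w_1,\dots,w_J$, the same-class pairs carry total weight $\sum_j w_j^2\ge 1/J$. For those pairs $\langle a(pn),a(qn)\rangle=\|a(n)\|^2=1$ when $|F_i|=1$. Hence a criterion using only cross-class correlations is not available, and the bound you do get is trivial.

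Cross pairs do not help either. They only produce correlations at one fixed shift such as $T_{i,j}T_{i,j'}^{-1}$. A box seminorm needs $\E_{k_1,k_2\in[K]}R^{k_1}F\cdot R^{k_2}\overline{F}$ with $K\to\infty$, and iterating a prime criterion only composes boundedly many fixed shifts. Relative densities of classes are not the issue: they need not exist and are never used.

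The missing idea is to run \Turan--Kubilius over almost primes $\CP_M^k$, i.e. products of $k$ distinct primes from one large class $\CP(a_1)$, and also to average over $k\in[K]$. This requires the estimate $\logE_{n,m\in\CP_M^k}(n,m)-1\to 0$. After Cauchy--Schwarz one gets $S_{i,nm_1}=T_{i,1}^{k_1}S_{i,n}$ and $S_{i,nm_2}=T_{i,1}^{k_2}S_{i,n}$, with $k_1,k_2$ averaged independently over $[K]$. The trivial diagonal $k_1=k_2$ now has weight only $1/K$. Iterating over the distinct transformations in the $a_1$-position, then the $a_2$-position, and so on, yields control by $\nnorm{F_{i_0}}$ along $T_{i_0,1}$ and the differences $T_{i_0,1}T_{i_t,1}^{-1}$. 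This is why the paper stresses that allowing $k_1\neq k_2$ is essential.

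\paragraph{The complexity measure.} Your proposed complexity also fails to decrease. In the magic-extension step, $F_{i_0}$ is replaced by $F_0\prod_{d,t}F_{t,d}$, where each $F_{t,d}$ is invariant under $T_{i_0,d}T_{i_t^{(d)},d}^{-1}$. Each $S_{i_0,n}F_{t,d}$ then becomes a new action with $T_{i_t^{(d)},d}$ in the $a_d$-position. So the number of actions grows (in the introductory example, from $2$ to $3$), and an order led by the number of actions goes up rather than down.

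The paper instead orders lexicographically the triple consisting of:
\begin{itemize}
\item the number of additive functions;
\item the minimal number of distinct non-identity transformations in a position;
\item the minimal multiplicity of a transformation in that position.
\end{itemize}
Lowering $c_2$ when $c_3=1$, and $c_1$ when $c_2=c_3=1$, each takes one reduction step. Driving $c_3>1$ down to $1$ needs a separate induction on $c_1$ through the localization of the collection.

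With this scheme the only base case is Theorem A. Your appeal to Tao/Austin/Host and to Gaussian behaviour of $\vec\Omega(n)$ is unnecessary. It is also unjustified as stated, because \Halasz{} only controls mean values of multiplicative functions such as $e(\theta\cdot\vec a(n))$, not nilsequence-type functions of $\vec a(n)$.
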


\begin{Remark}\label{remark_invertible extensions}
   We may assume henceforth that the actions $S_{i}$ in the statement of \cref{main theorem} are invertible. Indeed, for any system $(X,\X,\mu, S_1,\ldots, S_{\ell})$, we can construct an extension $(\wt{X},\wt{\X}, \wt{\mu}, \wt{S}_1,\ldots, \wt{S}_{\ell})$ (see the following section for definition), where the actions $\wt{S_i}$ are invertible. This can be proven by a standard modification of the invertible extension construction for $(\N,+)$ actions (see the proof \cite[Theorem 2.6]{Furstenberg-book}).
\end{Remark}

For brevity, we refer to commuting, invertible, finitely generated multiplicative systems simply as \emph{systems}.

Similar to the setting of $\Z$-actions, we note that some commutativity assumption on the actions $S_{i,n}$ is necessary. While the theorem might be true if we assume that the actions generate a nilpotent group, it does not hold for arbitrary collections of actions. To see this, one can modify a construction of Berend \cite{Berend85}.\footnote{On the sequence space $\{0,1\}^{\Q_{+}}$ with the Bernoulli $(1/2,1/2)$-measure, we define the action $T_n$ by $(T_nx)_q=x_{qn},$  we consider a permutation $\pi$ of $\Q_{+}$ such that $\pi(q)\in \{q,1/q\}$ for any $q\in \Q_{+}$ and such that $\pi^2=1,$ and also define 
$(\psi_\pi x)_q=(x_{\pi(q)})_q.$
Letting $S_n=\psi_{\pi}\circ T_n\circ \psi_{\pi},$ and $A=\{(x_q)\colon x_1=1\},$ a straightforward calculation implies that $(T_nx)_0=x_n$ and $(S_nx)_0=x_{\pi(n)}$ thus, we deduce that \begin{equation*}
    \frac{1}{N}\sum_{n=1}^N\mu(T_n^{-1}A\cap S_n^{-1}A)=\frac{1}{N}\sum_{n=1}^{N}\left(\frac{1}{2}\1_{\pi(n)=n}+\frac{1}{4}\1_{\pi(n)\neq n}\right).
\end{equation*} We can now choose $\pi$ in a similar fashion as in \cite{Berend85} so that the last quantity does not converge. This also implies that 
$\frac{1}{N}\sum_{n=1}^N T_n\1_{A}\cdot S_n\1_A$ cannot converge in $L^2(\mu)$ (not even weakly).}

\subsection{Overview of the proof}

In order to prove \cref{main theorem}, our first step is to 
use \cref{fg actions characterization lemma} (see Section \ref{section_background} below) to translate our convergence problems into ones involving additive systems with completely additive functions in the iterates. This maneuver enables access to the machinery of box seminorms, which can bound our original averages.
After some further technical reductions, we show that it suffices to establish convergence for averages of the form \begin{equation}\label{eq_general form of average with additive transformations}
    \frac{1}{N}\sum_{n=1}^{N}\prod_{i=1}^\ell T_{i,1}^{a_1(n)}\ldots T_{i,m}^{a_m(n)}F_i 
\end{equation}for some completely additive functions $a_1,\ldots, a_m$ (that satisfy additional pleasant properties) and commuting measure-preserving transformations
$(T_{i,j})_{i\in[\ell], j\in[m]}$, not necessarily distinct. In fact,  many of the transformations may repeat when we reduce to this form, and this needs to be taken into account when defining a notion of complexity for this average.

To get the desired seminorm control, we employ a variant of the classical criterion of \Katai\ \cite{katai} (later rediscovered in \cite{bourgain_sarnak_ziegler}), which asserts that for any sequence $a$ taking values on a Hilbert space $(\CH, \norm{\cdot})$, we have 
\begin{equation*}
   \limsup_{N\to\infty} \Bigg\|\frac{1}{N}\sum_{n=1}^Na(n) \Bigg\|^2\leq \lim_{T\to\infty} \frac{1}{(\log\log T)^2}\sum_{p,q\in \P\cap[T]} \frac{1}{pq}\lim_{N\to\infty} \Bigg|\frac{1}{N}\sum_{n=1}^N \langle a(pn),a(qn)\rangle\Bigg|,
\end{equation*}
assuming that the limits on the right-hand side exist. This is a simple consequence of the \Turan-Kubilius inequality. 
For our purposes, we adapt a generalization 
of the \Turan-Kubilius inequality (first recorded in \cite{richter_PNT}) that allows us to replace the primes $p,q$ in the previous expression with numbers $\wt{p}, \wt{q}$
where $\wt{p}$ has exactly $\ell_1$ prime factors and $\wt{q}$ has exactly $\ell_2$ prime factors, for some $\ell_1,\ell_2\in \N$. The ability to choose $\ell_1, \ell_2$  to be distinct in this expression is crucial, as the bounds we get are trivial when $\ell_1=\ell_2$ (thus, for instance, the classical version of \Katai's criterion is ineffective in providing seminorm control).

For the sake of exposition, let us suppose that we are dealing with a special case of \eqref{eq_general form of average with additive transformations} with $m=3,\ell=2$. That is, we want to prove convergence for
$$\frac{1}{N}\sum_{n=1}^N T_1^{a_1(n)}T_2^{a_2(n)}T_3^{a_3(n)}F_1\cdot R_1^{a_1(n)}R_2^{a_2(n)}R_3^{a_3(n)}F_2$$
for some commuting measure-preserving maps $T_1,T_2,T_3, R_1,R_2,R_3$. Applying our orthogonality criterion, we can bound this average by the box seminorm\footnote{For the definition of the box seminorms, we refer our reader to Section \ref{section_background}.} $\nnorm{F_1}_{T_1, T_1R_1^{-1}}$. In later sections, we provide an example that illustrates how we arrive at these types of bounds.

The previous bound holds for any system $(X,\X,\mu,T_1,T_2,T_3,R_1,R_2,R_3)$. However, the factor corresponding to this seminorm does not have any beneficial structure in general. Nonetheless, Host observed in \cite{Host-L^2-commuting-linearcase} that any system always has an extension (called a ``magic extension''), in which the corresponding factor admits a helpful description.  

Thus, we argue as follows: in order to prove convergence for our averages, it suffices to prove the convergence of the corresponding averages in this magic extension. Let $(\wt{X}, \wt{\mu},\wt{T}_1, \wt{T}_2,\wt{T}_3, \wt{R}_1, \wt{R}_2, \wt{R}_3)$ be this extension, then our averages become
$$\frac{1}{N}\sum_{n=1}^N \wt{T}_1^{a_1(n)}\wt{T}_2^{a_2(n)}\wt{T}_3^{a_3(n)}G_1\cdot \wt{R}_1^{a_1(n)}\wt{R}_2^{a_2(n)}\wt{R}_3^{a_3(n)}G_2.$$
If $G_1$ is orthogonal to the factor corresponding to the seminorm $\nnorm{\cdot}_{\wt{T}_1, \wt{T}_1\wt{R}_1^{-1}}$, then the average converges to zero by our seminorm bound, and we are done. Otherwise, we may assume that our function $G_1$ is measurable with respect to this factor. In the magic extension, we can approximate any function measurable with respect to this factor by linear combinations of functions of the form $H_1\cdot H_2$, where $H_1$ is invariant with respect to $\wt{T}_1$ and $H_2$ is invariant with respect to $\wt{T}_1\wt{R}_1^{-1}$ (i.e., the maps appearing in the seminorm). It suffices, therefore, to establish convergence in the case where $G_1$ is replaced with the product $H_1\cdot H_2$. Using the relations $\wt{T}_1H_1=H_1$ and $\wt{T}_1H_2=\wt{R}_1H_2$, we have to prove convergence for the averages $$\frac{1}{N}\sum_{n=1}^N \wt{T}_2^{a_2(n)}\wt{T}_3^{a_3(n)}H_1\cdot \wt{R}_1^{a_1(n)}\wt{T}_2^{a_2(n)}\wt{T}_3^{a_3(n)}H_2\cdot  \wt{R}_1^{a_1(n)}\wt{R}_2^{a_2(n)}\wt{R}_3^{a_3(n)}G_2.$$
We claim by induction that an average of this form converges for all systems 
and any choice of functions $H_1,H_2,H_3$, and then we are done.

To summarize, we started with a given average and passed to an extension to reduce our problem to an average that we claim is easier to understand.
Since we need to induct on the form that the average can take, we need a measure for the complexity of an average like \eqref{eq_general form of average with additive transformations}. It is not clear that our problem was simplified in the previous exposition, since the length of the average increased (we started with two measurable functions and we reduced to an average involving three functions). Nevertheless, we claim that the complexity of the average decreased. This is reminiscent of the PET induction scheme in the additive setting (via the van der Corput inequality), originally introduced by Bergelson in \cite{BergelsonPET}. 
To quantify this notion precisely, we introduce a triplet $(c_1,c_2,c_3)$ called the complexity vector, where:

i) $c_1$ counts the number of additive functions appearing in the iterates (i.e., $c_1=m$ in \eqref{eq_general form of average with additive transformations} and $m=3$ in both of the previous examples).

ii) 
We identify the index $i$ that has the least number of distinct transformations $T_{i,j}$ (thus, the most repetitions), and we let $c_2$ be the number of distinct transformations in this position. In our example, if we assume that $T_i\neq R_i$ for $i\in \{1,2,3\}$, then we have two transformations in position $i$ (for all $i\in \{1,2,3\}$), so that $c_2=2$. In the average after the reduction, we note that $c_2$ has reduced to 1. Indeed, there is only one transformation corresponding to the position $a_1$ (namely, $\wt{R}_1$).

iii) 
If $i$ is the position we fixed in the definition of $c_2$, we look at the transformation $T_{i,j}$ that repeats the least, and we let $c_3$ be the number of times this transformation appears. For instance, we have $c_3=1$ in the first example, since in position $a_1$, we have that both $T_1, R_1$ appear once (the same holds for the other positions $a_2,a_3$). Similarly, we have that the new average after the reduction has $c_3=2$, since in position $a_1$ (where $c_2$ is attained), we have only one transformation $\wt{R}_1$, which appears twice.

We order the triplets $(c_1,c_2,c_3)$ lexicographically. Naturally, we say that an average with complexity vector $(c_1,c_2,c_3)$ has lower complexity 
than an average with complexity vector $(c_1',c_2',c_3')$, when $(c_1,c_2,c_3)\prec (c_1',c_2',c_3')$. In the previous example, we started with an average with vector $(3,2,1)$ and reduced our problem to an average with complexity vector $(3,1,2)$. Since $(3,1,2)\prec (3,2,1)$, the complexity decreased.

The actual definition of $(c_1,c_2,c_3)$ is slightly more complex for technical reasons, but it conveys the general structure of our argument. We prove that our procedure $$\text{seminorm control } \to \text{ pass to magic extension }\to  \text{construct new average}$$ can be iterated
until we reach an average of complexity corresponding to \cref{thm_PMET}.

\subsection{Open problems}

We conclude the introduction with some open problems.

A natural generalization of \cref{main theorem} involves multiplicative actions that do not necessarily commute. We have seen that the theorem is false without any commutativity assumption, but a slight weakening of this notion may still be sufficient.

\begin{Conjecture}
    Let $(X,\X,\mu)$ be a probability space, and $S_1,\ldots, S_{\ell}$ be finitely generated multiplicative actions generating a nilpotent group. 
    Then, for any functions $F_1,\ldots,F_\ell\in L^\infty(\mu)$, the averages
    \begin{equation*}
       \frac{1}{N}\sum_{n=1}^N S_{1,n}F_1\cdot\ldots\cdot S_{\ell,n}F_\ell
    \end{equation*}
    converge in $L^2(\mu)$.
    \end{Conjecture}

Another generalization of the main theorem would be a strengthening in the mode of convergence, such as upgrading mean convergence to pointwise convergence. Unfortunately, the situation is substantially more complicated in the multiplicative setting. In fact, for the action $S_n=T^{\Omega(n)}$ on a probability space $(X,\X,\mu)$, it is known that pointwise convergence fails in non-atomic ergodic systems \cite{Loyd}. Here, $\Omega(n)$ is the completely additive function defined by $\Omega(p)=1$ for all $p\in \P$.
 On the other hand, pointwise convergence does hold under the stronger assumption of unique ergodicity,\footnote{A topological system $(X,T)$ is called {\em uniquely ergodic} if it admits a unique invariant measure. Then, the corresponding measure-preserving system $(X,\X,\mu,T)$ is ergodic, making $\mu$ the only ergodic measure of the system.} and in fact, at every point of the space. Bergelson and Richter \cite{Bergelson-Richter} proved that, in a uniquely ergodic system, the averages
\begin{equation*}
    \frac{1}{N}\sum_{n=1}^N F(T^{\Omega(n)}x)
\end{equation*}
converge for any continuous function $F$ and every point $x\in X$. 

\begin{Problem}
   Let $(X,\X,\mu)$ be a probability space, and $T$ be a measure-preserving transformation. Under what assumptions do we have that the averages \begin{equation*}
         \frac{1}{N}\sum_{n=1}^N F(T^{\Omega(n)}x) \cdot G(T^{2\Omega(n)}x)
    \end{equation*}converge for every $F,G\in C(X)$ and every $x\in X$?
\end{Problem}

Another interesting question arises from the interaction between additive and multiplicative systems.
\begin{Conjecture}
    Let $(X,\X,\mu)$ be a probability space, $T_1,\ldots, T_k$ be invertible commuting measure-preserving maps, and $S_1,\ldots, S_{\ell}$ be invertible commuting finitely generated multiplicative actions. Suppose $p_1,\ldots, p_k\in \Z[x]$. Then, the averages \begin{equation*}
         \frac{1}{N}\sum_{n=1}^N T_1^{p_1(n)}F_1\cdot\ldots \cdot T_k^{p_k(n)}F_k\cdot S_{1,n}G_1\cdot\ldots\cdot S_{\ell,n}G_{\ell}
    \end{equation*}converge in $L^2(\mu)$ for any functions $F_1,\ldots, F_k,G_1,\ldots, G_{\ell}\in L^{\infty}(\mu)$.
\end{Conjecture}

This problem has been studied only in the case $\ell=1$ and $S_{1,n}=T^{\Omega(n)}$ for some measure-preserving map $T$.
A result of this form with $k=1$ and $p_1(n)=n$ appears in \cite[Corollary 1.33]{Charamaras-multiplicative}, with some extra ergodicity assumptions that ensure convergence of the averages to the product of the integrals. A more general result, allowing arbitrary $k\in\N$ and general polynomials (subject to some mild restrictions), appears in \cite{XIAO_2026}.

This problem is interesting even in the case $T_1=\ldots= T_k$. A potential approach could involve combining the complexity reduction scheme in this article with the classical PET induction scheme used for polynomial ergodic averages, though it is not clear whether this approach is sufficient. For this method to work, some commutativity assumption between the additive and multiplicative actions might be necessary.

The final question involves the problem of joint ergodicity which has become intensely studied in recent years (see \cite{kuca_survey} for the history of the problem). In simplistic terms, joint ergodicity 
is the phenomenon of convergence of ergodic averages to the product of the integrals of the functions involved.
Since we are using additive averages to study multiplicative actions, the correct notion to use is that of \emph{pretentious ergodicity}, introduced in \cite{Charamaras-multiplicative}. We proceed to define this notion explicitly.

Let $(X,\X,\mu,S_1,\ldots,S_\ell)$ be a multiplicative system. We say that $S_1,\ldots,S_\ell$ are {\em jointly pretentiously ergodic (for $\mu$)} if for any $F_1,\ldots,F_\ell\in L^\infty(\mu)$, we have
    \begin{equation}\label{defn_JPE_eqn}
        \lim_{N\to\infty}\frac{1}{N}\sum_{n=1}^N S_{1,n}F_1\cdot\ldots\cdot S_{\ell,n}F_\ell = \int_X F_1\, d\mu \cdot\ldots\cdot \int_X F_\ell\, d\mu
    \end{equation}
in $L^2(\mu)$. In the case $\ell=1$, we call $S_{1,n}$ \emph{pretentiously ergodic}.\footnote{The notion of pretentious ergodicity was defined differently in \cite{Charamaras-multiplicative}, though our definition is equivalent due to \cite[Theorem A]{Charamaras-multiplicative}.}

In this setting, the next conjecture is the analog of the result of Berend-Bergelson \cite{Bergelson-Berend} for $\Z^k$-actions.

\begin{Conjecture}
       Let $(X,\X,\mu,S_1,\ldots,S_\ell)$ be a finitely generated multiplicative system. Then $S_1,\ldots,S_\ell$ are jointly pretentiously ergodic if and only if both of the following conditions are satisfied:
    \begin{enumerate}[label=(\arabic*)]
        \item\label{cond1} $S_iS_j^{-1}$ is pretentiously ergodic for all distinct $1\leq i,j\leq \ell$,
        \item\label{cond2} $S_1\times\cdots\times S_\ell$ is pretentiously ergodic for $\underbrace{\mu\times\ldots\times\mu}_{\ell\ \text{times}}$.
    \end{enumerate}
\end{Conjecture}

Similar conjectures can be formulated for more complicated averages, where each action is evaluated along sequences other than $a(n)=n$, but we do not state the problem in this generality.

\subsection{Notation} We use the standard notation $\N$ for the natural numbers, $\N_0:=\N\cup\{0\}$, $\Z$ for the integers, $\Q$ for the rational numbers, $\Q_+$ for the positive rationals, $\P$ for the primes, $\R$ for the real numbers, $\C$ for the complex numbers, and $\mathbb{S}^1$ for the complex unit circle. For any $C\geq 1$, we let $[C]:=\{1,\ldots,\floor{C}\}$, where $\floor{C}$ is the integer part of $C$.
Given a nonempty finite set $A\subseteq\Z$, we write
$$\E_{n\in A}f(n) := \frac{1}{|A|}\sum_{n\in A}f(n),
\qquad
\logE_{n\in A}f(n):= \frac{1}{\sum_{n\in A}\frac{1}{n}}
\sum_{n\in A}\frac{f(n)}{n},$$
for the \Cesaro{} average and the logarithmic average of a function $f$ over $A$, respectively.
For $u\in\N_0$ and $z\in\C$, we define
$$\CC^u z :=
\begin{cases}
z, & \text{if } u \text{ is even},\\
\overline{z}, & \text{if } u \text{ is odd}.
\end{cases}$$
In addition, we use the standard asymptotic notation. Thus, given two functions $f,g$, $f(x)=\Oh(g(x))$ and $f(x)\ll g(x)$ both mean that $|f(x)|\leq C|g(x)|$ for some absolute constant $C>0$ and $x$ large enough, while $\oh_{x\to\infty}(1)$ denotes a quantity that goes to $0$ as $x$ grows to infinity. Subscripts on the
symbols $\Oh$ and $\ll$ indicate the permitted dependence of the implicit constant.
Finally, given a probability space $(X,\X,\mu)$ and a sub-$\sigma$-algebra
$\CY\subseteq\X$, we denote by $\mathcal{E}_\mu(F|\CY)$
the conditional expectation of a function $F\in L^1(\mu)$ with respect to $\mathcal{Y}$. 

\medskip

\noindent {\bf{Acknowledgment.}} We thank Nikos Frantzikinakis for suggesting the problem to us.


\section{Background}\label{section_background}

\subsection{Measure-preserving systems, factors and extensions}

Let $G$ be a countable semigroup. A \emph{(measure-preserving) $G$-system} is a standard probability space $(X,\X,\mu)$ equipped with a collection of measure-preserving transformations $T_g$, so that $G$ acts on $X$ through the transformations $T_g$: for any $g_1,g_2\in G$ we have $T_{g_1}\circ T_{g_2}=T_{g_1g_2}$.
In this paper, we only concern ourselves with actions of the group $(\Z^k,+
)$ and the semigroup $(\N,\times)$ (or $(\Q_{+},\times)$ for invertible actions).

We say the system $(Y,\mathcal{Y},\nu,(S_{g})_{g\in G})$ is a {\em factor} of $(X,\mathcal{X},\mu,(T_g)_{g\in G})$, and the latter is an {\em extension} of the former, if there exist $X'\subseteq X$, $Y'\subseteq Y$ of full measure that are invariant under the actions $(T_g)_{g\in G}$ and $(S_g)_{g\in G}$ respectively and a map $\pi:X'\to Y'$ such that $\nu=\mu\circ \pi^{-1}$ and $\pi\circ T_g(x)=S_g\circ \pi(x)$ for all $x\in X'$ and $g\in G$. A factor of the system $(X,\mathcal{X},\mu,(T_g)_{g\in G})$ corresponds to a $(T_g)_{g\in G}$-invariant sub-$\sigma$-algebra of $\mathcal{X}$ (in the above example this $\sigma$-algebra is $\pi^{-1}(\mathcal{Y})$). We record here that, with this identification, the space $L^{\infty}(Y)$ can be viewed as a subalgebra of $L^{\infty}(X)$ that is invariant under the $ G$-action on the probability space $\xmt$.

\subsection{Characterization of finitely generated multiplicative actions}

We provide a result of Frantzikinakis \cite{frantzikinakis2025decomposition} on the characterization of a finitely generated multiplicative action, which is an essential ingredient for the proof of \cref{main theorem}.
\begin{Definition}\label{nice sequences}
Given a completely additive function $a\colon\N\to\N_0$, we define
$$\CP(a) = \{p\in\P\colon a(p)\neq 0\}.$$
We say that the completely additive functions $a_1,\ldots,a_m\colon\N\to\N_0$ are {\em nice} if $a_j(\P)=\{0,1\}$ for all $j\in[m]$, and $\CP(a_j)\cap\CP(a_j')=\emptyset$ for all distinct $j,j'\in[m]$.
\end{Definition}

\begin{Lemma}[{Cf. \cite[Lemma 2.7]{frantzikinakis2025decomposition}}]
\label{fg actions characterization lemma}
    Let $\xms$ be a multiplicative system. Then $\xms$ is finitely generated if and only if there exist $m\in\N$, commuting invertible measure-preserving transformations $T_1,\ldots,T_m\colon X\to X$ with $T_i\neq T_j$ for $i\neq j$, and finitely generated completely additive functions $a_1,\ldots,a_m\colon\N\to\N$ that are nice, such that for all $n\in\N$,
    $$S_n = T_1^{a_1(n)}T_2^{a_2(n)}\ldots T_m^{a_m(n)}.$$
\end{Lemma}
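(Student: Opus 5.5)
The "if" direction is immediate: if $S_n=T_1^{a_1(n)}\cdots T_m^{a_m(n)}$ with each $a_j$ nice, then $a_j(p)\in\{0,1\}$ for every prime $p$. Hence every $S_p$ belongs to the finite set $\{\mathrm{id},T_1,\ldots,T_m\}$, and since the $\CP(a_j)$ are pairwise disjoint, each $S_p$ is in fact either the identity or a single $T_j$. So $\{S_p\colon p\in\P\}$ is finite.

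For the converse, assume $\{S_p\colon p\in\P\}$ is finite. We may assume the $S_p$ are invertible, by \cref{remark_invertible extensions} or simply because the $T_j$ are allowed to be the maps $S_p$ themselves. List the distinct non-identity elements of this set as $T_1,\ldots,T_m$. These commute, since all $S_n$ commute. Define $\CP_j=\{p\in\P\colon S_p=T_j\}$. These sets are pairwise disjoint because the $T_j$ are distinct. Let $a_j$ be the completely additive function with $a_j(p)=\1_{\CP_j}(p)$, so $a_j(p^k)=k$ on $\CP_j$ and $a_j$ is extended multiplicatively-additively to all of $\N$. By construction the $a_j$ are nice and take values in $\N_0$, and "finitely generated" for them just means they take finitely many values on primes, which holds. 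The primes with $S_p=\mathrm{id}$ lie in no $\CP_j$ and contribute nothing, consistent with $T_j^0=\mathrm{id}$.

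It remains to verify the identity $S_n=\prod_j T_j^{a_j(n)}$. Both sides are completely multiplicative in $n$ as maps into the commutative semigroup generated by the $T_j$: the left side because $S_{nm}=S_nS_m$, and the right side by additivity of the $a_j$ together with commutativity of the $T_j$. It therefore suffices to check agreement on primes, and there it holds by the definition of $\CP_j$. The case $n=1$ gives the identity on both sides.

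The only minor obstacle is bookkeeping. The statement writes $a_j\colon\N\to\N$, but these functions vanish at $1$, so the codomain should be read as $\N_0$, as in \cref{nice sequences}. If $m=0$ (every $S_p$ is the identity), take $m=1$ with $T_1=\mathrm{id}$ and $a_1\equiv 0$. There is no genuine analytic difficulty here; the content is purely combinatorial.
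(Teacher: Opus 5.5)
Your proof is correct. The paper gives no argument of its own here (it defers to \cite[Lemma 2.7]{frantzikinakis2025decomposition}), and yours is the natural one: partition the primes according to the value of $S_p$ and take the completely additive extensions of the indicators of the classes, just as the paper does in its proof of \cref{claim}. One small correction: invertibility cannot come from \cref{remark_invertible extensions}, which changes the underlying space. It is not needed, however, because any product of invertible $T_j$ is invertible, so the ``only if'' direction implicitly assumes $S$ invertible (the paper's standing convention), and then the $T_j$, being among the $S_p$, are automatically invertible.
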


\subsection{Host-Kra seminorms, box seminorms and factors}

Here, we recall the definition of box seminorms, which were first introduced by Host \cite{Host-L^2-commuting-linearcase} to give another proof of the convergence of linear ergodic averages for commuting transformations.
Let $(X,\mu, T_1,\ldots,  T_k)$ be a $\Z^k$-system and
$R_1,\ldots, R_s\in \langle T_1,\ldots, T_k \rangle$ be a collection of measure-preserving transformations generated by the maps $T_i$.
We define the box seminorms on $L^{\infty}(\mu)$ along the directions $R_1,\ldots, R_s$ inductively by 
\begin{equation*}
     \nnorm{F}_{R_1}^2=\lim_{M\to\infty}\E_{m_0,m_1\in [M]} \int_X R_1^{m_0}F\cdot R_1^{m_1}\overline{F} \, d\mu
 \end{equation*} and 
 \begin{equation*}
     \nnorm{F}_{R_1,\ldots,R_s}^{2^s}=\lim_{M\to\infty}\E_{m_0,m_1\in [M]}\nnorm{R_s^{m_0}F\cdot R_s^{m_1}\overline{F} }_{R_1,\ldots,R_{s-1}}^{2^{s-1}}.
 \end{equation*}

The existence of the limits above can be found, for example, in a more general setting in \cite{tsinas&friends}, and it can be shown that the iterated limit in this definition can be replaced by a simultaneous limit or by any limit along any \Folner\ sequence. We remark here that the first order seminorm has a closed form \begin{equation*}
    \nnorm{F}_{R_1}=\norm{\mathcal{E}_{\mu}(F|\mathcal{I}(R_1))}_2,
\end{equation*} which follows from the mean ergodic theorem.

It can also be proven that $\norm{F}_{R_1,\ldots, R_s}$ are indeed seminorms. Expanding the definition, we can write \begin{multline}\label{eq_expanding the seminorm}
  \nnorm{F}_{R_1,\ldots,R_s}^{2^s}=\lim_{M_s\to\infty}\dots \lim_{M_1\to\infty}\E_{m_{1,0},m_{1,1}\in [M_1]}\dots   \E_{m_{s,0},m_{s,1}\in [M_s]} \\
  \int_X \prod_{\underline{\e}\in \{0,1\}^s}\mathcal{C}^{|\underline{\e}|}R_1^{m_1{\e}_1}\ldots R_s^{m_s{\e}_s}F \, d\mu,
\end{multline}
where $$m_i{\e}_i:=\begin{cases}m_{i,0}, \ \text{ if } {\e}_i=0,\\
m_{i,1}, \ \text{ if } {\e}_i=1.
\end{cases}$$

As before, the iterated limits in \eqref{eq_expanding the seminorm} can be replaced with a simultaneous limit or any other limit along a F{\o}lner sequence in $\Z^s$ without affecting the value of the limits.

When $R_1=\ldots =R_s$, the box seminorm is simply the classical Host-Kra seminorm $\nnorm{\cdot}_{s,R_1}$ corresponding to $R_1$. These are defined inductively by\footnote{In general, defining the seminorms at each inductive step through an average over one variable or an average over two variables leads to equivalent seminorms, but we prefer to use the version with two shift parameters $m_0, m_1$ since this is more in line with the expressions appearing in our proofs.} \begin{align*}
    \nnorm{F}_{1,R_1}^2&=\lim_{M\to\infty}\E_{m\in [M]} \int_X F\cdot R_1^m \overline{F}\, d\mu\\
    \nnorm{F}_{s,R_1}^{2^s}&=\lim_{M\to\infty}\E_{m\in [M]} \nnorm{F\cdot R_1^{m}\overline{F}}_{s-1,R_1}^{2^{s-1}}
\end{align*}
and the averages can be taken again along any \Folner\ sequence.

It was proven in \cite{Host-L^2-commuting-linearcase} that the box seminorm $\nnorm{\cdot}_{R_1,\ldots, R_s}$ gives rise to an invariant $\sigma$-algebra (i.e. a factor) $\CZ_{R_1,\ldots,R_s}(X)$ characterized by the following property\begin{equation}\label{eq_characteristic property of factors}
    \nnorm{F}_{R_1,\ldots,R_s}=0\iff \mathcal{E}_{\mu}(f\big| \CZ_{R_1,\ldots,R_s}(X) )=0. 
\end{equation} 

To define this factor, we can adapt the construction in \cite{Tao-Ziegler-concatenation}. First, define a dual function along $R_1,\ldots, R_s$ to be a function that is the $ L^2$-limit of averages of the form
\begin{equation*}
   \lim_{M_s\to\infty}\dots \lim_{M_1\to\infty}\E_{m_{1,0},m_{1,1}\in [M_1]}\dots   \E_{m_{s,0},m_{s,1}\in [M_s]} \\
  \prod_{\underline{\e}\in \{0,1\}^s\setminus\underline{0}}\mathcal{C}^{|\underline{\e}|}R_1^{m_1{\e}_1}\ldots R_s^{m_s{\e}_s}F. 
\end{equation*}
This is very similar to the cubic expressions in \eqref{eq_expanding the seminorm}, though note that we have excluded the first coordinate in the product. Then, we define the $\sigma$-algebra $\CZ_{R_1,\ldots, R_s}$ so that $L^{\infty}(\CZ_{R_1,\ldots, R_s})$ is the algebra of bounded functions that are limits in $L^2(\mu)$ of linear combinations of dual functions along $R_1,\ldots,R_s$. It can be shown that the factor constructed in this way satisfies \eqref{eq_characteristic property of factors} (an application of the Cauchy-Schwarz-Gowers inequality).

\subsection{Magic extensions}

In order to extract some valuable information from these seminorm bounds, we will use the machinery of magic extensions. 
\begin{Definition}\label{defn magic}
    Let $(X,\mu, T_1,\ldots, T_k)$ be a $\Z^k$-system. We call this system \emph{magic with respect to the transformations $R_1,\ldots, R_s\in \langle T_1,\ldots, T_k\rangle$} if the factor $\mathcal{Z}_{R_1,\ldots, R_s}$ satisfies $$\mathcal{Z}_{R_1,\ldots, R_s}=\mathcal{I}(R_1)\vee \dots\vee \mathcal{I}(R_s),$$
    where $\mathcal{I}(R_1)\vee \dots\vee \mathcal{I}(R_s)$ is the smallest $\sigma$-algebra containing $\I(R_1),\ldots,\I(R_s)$.
\end{Definition}
It can be shown that $\mathcal{Z}_{R_1,\ldots, R_s}$ is a factor of the original system. A function with $f\in \mathcal{Z}_{R_1,\ldots, R_s}$ in a magic system can be approximated (in $L^2(\mu)$) by linear combinations of functions of the form $g_1\cdot\ldots\cdot g_s$, where $g_i$ is invariant under $R_i$.

The following  is a generalization of \cite[Theorem 2]{Host-L^2-commuting-linearcase}.

\begin{Proposition}[{\cite[Proposition 2.8]{fra_ku_hardy_corners}}]
\label{prop_Nikos_and_Borys}
Let $(X, \mu, T_1, \dots , T_k)$ be a $\Z^k$-system,
and suppose that the transformations $R_1, \dots, R_s \in \langle T_1, \dots , T_k\rangle $ generate $T_1, \dots, T_s$ for some $s\in [k]$. Then
$(X,  \mu, T_1, $ $ \dots, T_k)$ admits an extension $(X^{*},\mu^{*}, T^{*}_1, \dots , T^{*}_k
 )$ that is magic with respect to
$R_1, \dots , R_s$.
\end{Proposition}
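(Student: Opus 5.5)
The plan is to follow Host's original construction from \cite{Host-L^2-commuting-linearcase}: build the extension as a cube space over $X$ carrying the box measure along $R_1,\ldots,R_s$. Concretely, I would set $X^*=X^{\{0,1\}^s}$ and equip it with the box measure $\mu^*=\mu_{R_1,\ldots,R_s}$. This is the measure defined by
$$\int_{X^*}\bigotimes_{\underline{\e}\in\{0,1\}^s}F_{\underline\e}\,d\mu^*=\lim \E_{m_{1,0},m_{1,1}}\ldots\E_{m_{s,0},m_{s,1}}\int_X\prod_{\underline\e}R_1^{m_1\e_1}\cdots R_s^{m_s\e_s}F_{\underline\e}\,d\mu .$$
The limit exists by the results quoted after \eqref{eq_expanding the seminorm}; equivalently, $\mu^*$ is built by the usual inductive relatively independent self-joinings over $\I(R_j)$. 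On $X^*$ I take the diagonal maps $T_i^*=T_i^{\{0,1\}^s}$. These commute, and they preserve $\mu^*$ because each $T_i$ commutes with all $R_j$ and preserves $\mu$. The factor map is the projection $\pi_{\underline 0}$ onto the coordinate $\underline 0$. Since the $\underline0$-marginal of $\mu^*$ is $\mu$, this exhibits $(X^*,\mu^*,T_1^*,\ldots,T_k^*)$ as an extension of $(X,\mu,T_1,\ldots,T_k)$. Moreover, $R_j^*$, the diagonal copy of $R_j$, lies in $\langle T_i^*\rangle$ by the same word.

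It remains to show magicness, $\CZ_{R_1^*,\ldots,R_s^*}(X^*)\subseteq \I(R_1^*)\vee\dots\vee\I(R_s^*)$; the reverse inclusion is automatic, since invariant functions are dual functions. I would proceed in three steps.

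\emph{Step 1.} By \eqref{eq_characteristic property of factors} it suffices to show that if $\mathcal E_{\mu^*}(F\mid \I(R_1^*)\vee\dots\vee\I(R_s^*))=0$, then $\nnorm{F}_{R_1^*,\ldots,R_s^*}=0$. By density and multilinearity, together with the Cauchy--Schwarz--Gowers inequality (so that $L^2$-approximation is compatible with the seminorm), it suffices to bound the seminorm on tensor functions $F=\bigotimes_{\underline\e}F_{\underline\e}$.

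\emph{Step 2.} I would compute $\nnorm{F}_{R_1^*,\ldots,R_s^*}^{2^s}$ on $X^*$ as an integral over a box measure of $X$ with $2s$ directions. The $s$ internal directions come from $\mu^*$ and the $s$ external ones from the seminorm, each $R_j$ appearing twice.

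\emph{Step 3.} The key observation, as in Host, is that the external and internal averages along $R_j$ can be merged. For each $j$, the ergodic averages along $R_j$ of a tensor function converge to its conditional expectation onto $\I(R_j)$ in the appropriate coordinate. Using this, I would rewrite the seminorm of $F$ as a correlation of $F$ against a function on $X^*$. That function is a product, over $j\in[s]$, of functions obtained from the faces $\{\e_j=1\}$ of $\mu^*$. Each such face function is $R_j^*$-invariant: on $X^*$, applying $R_j^*$ to the coordinates with $\e_j=1$ is absorbed by the averaging parameter $m_{j,1}$. This gives $\nnorm{F}^{2^s}=\int F\cdot\prod_j G_j\,d\mu^*$ with $G_j$ measurable with respect to $\I(R_j^*)$. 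The right-hand side vanishes under the hypothesis of Step 1.

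I expect Step 3 to be the main obstacle, and specifically the claim that the face functions are $R_j^*$-invariant. The generation hypothesis that $R_1,\ldots,R_s$ generate $T_1,\ldots,T_s$ enters here. It guarantees that the face transformations $R_j^{(\e_j=1)}$, which act only on the coordinates with $\e_j=1$, preserve $\mu^*$. It also guarantees that the invariant $\sigma$-algebra of the face transformation agrees, modulo $\mu^*$-null sets, with that of the diagonal map $R_j^*$ on the relevant functions. Finally, it lets us extend the face actions compatibly with all of $T_1,\ldots,T_k$, including $T_{s+1},\ldots,T_k$, which commute with everything. I would first verify the case $s=1$ directly; there the claim reduces to $\mu^*=\mu\times_{\I(R_1)}\mu$ and the standard fact that $\I(R_1)\otimes 1$ and $1\otimes\I(R_1)$ coincide mod $\mu^*$. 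I would then induct on $s$ through the inductive description of $\mu^*$ as a relative product. If a single step does not yield exact equality of the factors, I would iterate the construction countably many times and pass to the inverse limit, checking that magicness is preserved in the limit.
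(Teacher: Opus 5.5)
\textbf{The diagonal lifts make the extension non-magic.} With $T_i^*=T_i^{\{0,1\}^s}$ the extension is in general not magic, so Step 3 cannot be carried out.

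Here is a counterexample. Take $X=\mathbb{T}$ with Lebesgue measure, $T_1=T_2=T\colon x\mapsto x+\alpha$ with $\alpha$ irrational, $s=2$ and $R_j=T_j$.
\begin{itemize}
\item The measure $\mu^*=\mu_{T,T}$ is Haar measure on the points $(x_{00},x_{10},x_{01},x_{11})=(u,u+a,u+b,u+a+b)$, and the diagonal map is $(u,a,b)\mapsto(u+\alpha,a,b)$.
\item Hence $\I(R_1^*)=\I(R_2^*)$ is the $\sigma$-algebra generated by $(a,b)$, and $F=e^{2\pi i x_{00}}$ satisfies $\mathcal{E}_{\mu^*}(F\mid \I(R_1^*)\vee\I(R_2^*))=0$.
\item Yet $F$ is the lift through $\pi_{\underline 0}$ of the eigenfunction $e^{2\pi i x}$, so $\nnorm{F}_{R_1^*,R_2^*}=\nnorm{e^{2\pi i x}}_{T,T}=1$.
\end{itemize}
Your inverse-limit fallback does not rescue this. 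Every iterate of the diagonal construction is again a rotation of the circle coordinate of $x_{\underline 0}$ times the identity on the remaining coordinates, so $e^{2\pi i x_{\underline 0}}$ stays a witness.

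The flaw in Step 3 is treating face maps and the diagonal map as interchangeable. The coordinates $x_{\underline\e}$ with $\e_j=1$ produced by the cube formula are fixed by $R_j$ acting on the \emph{opposite} face $\{\e_j=0\}$, not by the diagonal map. The invariant $\sigma$-algebras of face maps and of the diagonal map differ sharply: above, $e^{2\pi i x_{00}}$ is invariant under $T$ acting on $\{\e_1=1\}$ but is orthogonal to $\I(R_1^*)$. Also, face maps preserve every box measure, as your own absorption of $m_{j,1}$ shows, so the generation hypothesis is not what makes them measure-preserving.

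\textbf{The fix is Host's choice of lifts.} The paper only quotes this result from \cite{fra_ku_hardy_corners}, which rests on Host's construction. There, $R_j$ is lifted to the \emph{side map} $R_j^*$: it acts by $R_j$ on the coordinates $x_{\underline\e}$ with $\e_j=0$ and as the identity on those with $\e_j=1$.
\begin{itemize}
\item The side maps commute, preserve $\mu^*$, and are intertwined with $R_j$ by $\pi_{\underline 0}$.
\item The remaining generators $T_{s+1},\dots,T_k$ are lifted diagonally. This is compatible because they commute with every $R_j$.
\item The generation hypothesis lets you take $R_1,\dots,R_s,T_{s+1},\dots,T_k$ as generators and define $T_1^*,\dots,T_s^*$ as the corresponding words in the $R_j^*$. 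The lifted action is then consistent, and the directions $R_j^*$ of the extension are exactly the side maps.
\end{itemize}
With this choice the invariance you wanted is automatic. Every $\underline\e\neq\underline 0$ has some $\e_j=1$, so $x_{\underline\e}$ is fixed by $R_j^*$. Hence $\nnorm{f}_{R_1,\dots,R_s}^{2^s}=\int f(x_{\underline 0})\prod_{\underline\e\neq\underline 0}\mathcal{C}^{|\underline\e|}f(x_{\underline\e})\,d\mu^*$ correlates $f\circ\pi_{\underline 0}$ with a product of functions, each invariant under some $R_j^*$. In the example, $e^{2\pi i x_{00}}=e^{2\pi i(x_{10}-x_{11})}\,e^{2\pi i x_{01}}$ holds $\mu^*$-a.e., a product of an $R_1^*$-invariant and an $R_2^*$-invariant function. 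You would still have to extend the property from lifts of functions on $X$ to arbitrary functions on $X^*$, as Host's proof does.
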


\section{Preparatory lemmas}

\subsection{A generalization of \Katai's criterion}

In order to establish that the average in \eqref{main average general} is controlled by a box seminorm, we rely on a variant of the \Katai{} orthogonality criterion. This variant, which we also view as a multiplicative analogue of the van der Corput inequality, is inspired by the work of Bergelson and Richter \cite{Bergelson-Richter}.

Given a set $\CP\subseteq\P$, and $M, k\in\N$, we define the set 
\begin{equation}\label{almost primes finite set}
    \CP_M^k := \{n=p_1\cdot\ldots\cdot p_k\colon \;p_1,\ldots,p_k\in\CP\cap[M]~\text{are pairwise distinct}\},    
\end{equation}
and 
\begin{equation}
    \L(\CP_M^k) = \sum_{p\in\CP_M^k}\frac{1}{p}.
\end{equation}
For simplicity, in the case $k=1$, we denote the set $\CP_M^1$ as $\CP_M$, and this is just the set of primes up to $M$.

\begin{Definition}\label{defn_thin_set}
We say that a subset of the primes $\CP$ is {\em thin} if $\sum_{p\in\CP}\frac{1}{p}<\infty$. Otherwise, we say that $\CP$ is {\em large}.
\end{Definition}
It is straightforward to see that any finite intersection of thin sets is also thin.
The main result of this section is the following.
  
\begin{Proposition}\label{prop_Katai variant}
Let $(\Hilb,\|\cdot\|)$ be a Hilbert space, $\CP\subseteq\P$ be a large set, and $f\colon\N\to\Hilb$ be a bounded function. Then for any $k_1,k_2\in\N$, we have
\begin{multline}\label{eq_Katai variant}
    \limsup_{N\to\infty}\Big\|\E_{n\in[N]} f(n)\Big\|^2 \\
    \leq \limsup_{M\to\infty}\limsup_{N\to\infty} \logE_{\substack{m_1\in\CP_M^{k_1} \\ m_2\in\CP_M^{k_2}}} \E_{n\in[N]} \1_{n\leq N/\max\{m_1,m_2\}} \langle f(nm_1),f(nm_2)\rangle.
\end{multline}
\end{Proposition}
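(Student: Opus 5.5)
Fix $k\in\{k_1,k_2\}$. The plan is to use a Turán--Kubilius type inequality for the weights $w_k(n):=\sum_{m\in\CP_M^k,\, m\mid n} 1$, normalized by $\L(\CP_M^k)$. The key input is that for $M$ fixed and $N\to\infty$,
\begin{equation*}
\E_{n\in[N]}\Big|\frac{w_k(n)}{\L(\CP_M^k)}-1\Big|^2\ll \oh_{M\to\infty}(1).
\end{equation*}
For $k=1$ this is the classical Turán--Kubilius inequality restricted to primes in $\CP\cap[M]$: the variance equals $\L(\CP_M)^{-1}$ up to $\Oh(1/N)$ errors, and it tends to zero because $\CP$ is large, so $\L(\CP_M)\to\infty$. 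For general $k$ one expands $w_k(n)^2$ and counts pairs $(m,m')$ with $\mathrm{lcm}(m,m')\mid n$. Pairs with $m,m'$ coprime give a main term, namely $\sum 1/(mm')$ over coprime pairs, which is $\L(\CP_M^k)^2(1+\oh(1))$. Pairs sharing a prime contribute at most $\Oh_k\big(\L(\CP_M^k)^2/\L(\CP_M)\big)$, since fixing a shared prime $p$ costs $\sum_p 1/p$ relative to $\sum_p 1/p^2<\infty$. Here one also uses $\L(\CP_M^k)\asymp_k \L(\CP_M)^k$. The first moment is $\L(\CP_M^k)+\Oh(M^k/N)$.

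Given this, write $\E_{n\in[N]} f(n)$ as
\begin{equation*}
\E_{n\in[N]}\frac{w_k(n)}{\L(\CP_M^k)}f(n)+\mathrm{err}_k .
\end{equation*}
By Cauchy--Schwarz and boundedness of $f$, $\|\mathrm{err}_k\|\ll \oh_{M\to\infty}(1)$. Next we expand the main term:
\begin{equation*}
\frac{1}{N\L(\CP_M^k)}\sum_{m\in\CP_M^k}\sum_{n\le N/m} f(nm)=\logE_{m\in\CP_M^k}\E_{n\in[N]}\1_{n\le N/m}f(nm).
\end{equation*}
Denote this by $A_k$. So $\E_{n\in[N]}f(n)=A_{k_1}+\oh(1)=A_{k_2}+\oh(1)$. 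Rather than applying Cauchy--Schwarz to a single $A_k$, we take the asymmetric product
\begin{equation*}
\Big\|\E_{n\in[N]} f(n)\Big\|^2=\langle A_{k_1},A_{k_2}\rangle+\oh_{M\to\infty}(1).
\end{equation*}
Then
\begin{equation*}
\langle A_{k_1},A_{k_2}\rangle=\logE_{m_1,m_2}\,\E_{n,n'\in[N]}\1_{n\le N/m_1}\1_{n'\le N/m_2}\langle f(nm_1),f(n'm_2)\rangle .
\end{equation*}
This still has two independent variables $n,n'$, but the target expression has a single $n$ with $\langle f(nm_1),f(nm_2)\rangle$.

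Matching this is the main obstacle. My first plan is to apply the Turán--Kubilius approximation to a vector-valued sum and then Cauchy--Schwarz once. Set
\begin{equation*}
B(n):=\frac{1}{\L(\CP_M^{k_1})}\sum_{m_1\in\CP_M^{k_1},\ m_1\mid n} f(n).
\end{equation*}
By Cauchy--Schwarz, $\|\E_{n\in[N]} f(n)-\E_{n\in[N]} B(n)\|\ll \oh(1)$, and rewriting gives $\E_{n\in[N]} B(n)=A_{k_1}$. Now bound
\begin{equation*}
\|A_{k_1}\|^2\le \E_{n\in[N]}\Big\|\logE_{m_1\in\CP_M^{k_1}}\1_{n\le N/m_1} f(nm_1)\Big\|^2\cdot(\text{normalization}),
\end{equation*}
using Cauchy--Schwarz in $n$. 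Expanding the square produces the symmetric pairs $(m_1,m_1')\in\CP_M^{k_1}\times\CP_M^{k_1}$ on a common $n$, which is exactly the trivial case $\ell_1=\ell_2$. To reach the asymmetric pairs, instead write
\begin{equation*}
\|\E_{n\in[N]} f(n)\|^2=\Big\langle \E_{n\in[N]} f(n),\E_{n\in[N]} f(n)\Big\rangle\approx \Big\langle \E_{n\in[N]}\frac{w_{k_1}(n)}{\L(\CP_M^{k_1})}f(n),\ \E_{n\in[N]} f(n)\Big\rangle .
\end{equation*}
In the second slot, rewrite $\E_{n\in[N]} f(n)$ through the change of variables $n=m_1 r$. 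The approximation then has to be applied in the variable $r$ rather than $n$, which moves the weight $w_{k_2}$ onto the inner variable. After that, Cauchy--Schwarz in $r$ gives
\begin{equation*}
\E_{r}\Big\|\logE_{m_1}\1\, f(rm_1)\Big\|\cdot\Big\|\logE_{m_2}\1\, f(rm_2)\Big\|,
\end{equation*}
and one further Cauchy--Schwarz, applied to the product of the two $\logE$ sums, yields the mixed term $\logE_{m_1,m_2}\E_r\1_{r\le N/\max\{m_1,m_2\}}\langle f(rm_1),f(rm_2)\rangle$.

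The delicate part is controlling the truncation indicators and the $\Oh(M^{k}/N)$ boundary errors; these vanish when $N\to\infty$ is taken before $M\to\infty$. Finally, take $\limsup_N$ and then $\limsup_M$, so that all the $\oh_{M\to\infty}(1)$ errors disappear and the bound \eqref{eq_Katai variant} follows.
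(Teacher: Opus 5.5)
You have a genuine gap. Your \Turan--Kubilius step is sound and is exactly what the paper obtains from \cref{br_lemma1} and \cref{lemma_bounding double log average of the gdc-1}. What fails is the step meant to produce the mixed correlation, and for a fixed pair $(k_1,k_2)$ it cannot be repaired.

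Once you reach $\langle A_{k_1},A_{k_2}\rangle$, with two independent variables $n,n'$, the only way to merge them is $\norm{\E_n g_n}^2\le\E_n\norm{g_n}^2$. If $g_n$ contains both the $k_1$-terms and the $k_2$-terms, then $\E_n\norm{g_n}^2$ also contains the nonnegative diagonal blocks ($m_1,m_1'\in\CP_M^{k_1}$ and $m_2,m_2'\in\CP_M^{k_2}$), and you cannot drop those from an upper bound. Your last move goes the wrong way: Cauchy--Schwarz gives $|\langle X_r,Y_r\rangle|\le\norm{X_r}\norm{Y_r}$, so $\E_r\norm{X_r}\norm{Y_r}$ is not bounded by $\E_r\langle X_r,Y_r\rangle$. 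A Cauchy--Schwarz in $r$ instead just returns $\E_r\norm{X_r}^2$ and $\E_r\norm{Y_r}^2$. Also, the ``change of variables $n=m_1r$ in the second slot'' has no meaning, since $m_1$ lives only in the first slot.

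In fact the fixed-pair bound is false. Take $\CP=\P$, $f=\lambda$ the Liouville function, $k_1=1$, $k_2=2$. The left-hand side is $0$ by the prime number theorem. But $\langle f(nm_1),f(nm_2)\rangle=\lambda(m_1)\lambda(m_2)=-1$ for every $n$, so the right-hand side of \cref{cor_Katai variant} (or of \eqref{eq_Katai variant} with the inner average normalized by $N/\max\{m_1,m_2\}$) equals $-1$.

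The paper never fixes $(k_1,k_2)$. It approximates $\E_{n\in[N]}f(n)$ by $\E_{k\in[K]}\logE_{m\in\CP_M^k}\E_{n\in[N/m]}f(nm)$ and applies one Cauchy--Schwarz in $n$ to this $k$-averaged vector. Expanding the square then gives $\E_{k_1,k_2\in[K]}$ of the mixed correlations. That $[K]^2$-averaged bound is what its final display actually proves, and it is the form used in \cref{Katai for actions}, where the diagonal $k_1=k_2$ contributes only $\Oh(1/K)$. Averaging over $k$ before Cauchy--Schwarz is the idea you are missing.

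There is also a normalization error that matters. Your identity $\frac{1}{N\L(\CP_M^k)}\sum_{m\in\CP_M^k}\sum_{n\le N/m}f(nm)=\logE_{m\in\CP_M^k}\E_{n\in[N]}\1_{n\le N/m}f(nm)$ is off by a factor of $m$; the left side equals $\logE_{m\in\CP_M^k}\E_{n\in[N/m]}f(nm)$. With the normalization $\E_{n\in[N]}\1_{n\le N/\max\{m_1,m_2\}}$, the right-hand side of \eqref{eq_Katai variant} is at most $\sup_n\norm{f(n)}^2\,\zeta(2)/\L(\CP_M^{k_1})$, which tends to $0$. So as printed, the inequality already fails for $f\equiv1$. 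The paper's rewriting of $\logE_m\E_{n\in[N/m]}f(nm)$ as $\E_{n\in[N]}\logE_m\1_{nm\le N}f(nm)$ has the same slip.

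After restoring the factor, a single global Cauchy--Schwarz over $n\in[N]$ weights the pair $(m_1,m_2)$ by $1/\max\{m_1,m_2\}$ rather than by $1/(m_1m_2)$. So the truncations are an $\Oh(1)$ effect, not a boundary error that vanishes as $N\to\infty$. You must first localize $m_1,m_2$ to comparable (e.g.\ dyadic) scales, as in the Bourgain--Sarnak--Ziegler argument, or use structure of $f$.

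For example, take $f(n)=n^{it}$ with $t\neq0$ and $\CP=\P$. The left-hand side is $1/(1+t^2)$. The right-hand side is $\E_{k_1,k_2\in[K]}\logE_{m_1,m_2}(m_1/m_2)^{it}=\big|\E_{k\in[K]}\logE_{m\in\CP_M^k}m^{it}\big|^2$, which tends to $0$ as $M\to\infty$. So even the $k$-averaged bound, with $m_1,m_2$ log-weighted across all scales, fails for general bounded $f$.
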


The reader familiar with the \Katai\ orthogonality criterion will recognize similarities between its statement and the last proposition. The difference is that instead of averaging over primes, we average over $k_1,k_2$ almost primes, that is numbers that are the product of a controlled number of primes. Averaging over almost primes is an essential maneuver in this work. In contrast, an application of the \Katai\      
criterion does not improve over the trivial bound in our averages.

The cutoff $\1_{n\leq N/\max\{m_1,m_2\}}$ is somewhat inconvenient for applications; however, it can be eliminated under an additional assumption.

\begin{Corollary}\label{cor_Katai variant}
    Let $(\Hilb,\|\cdot\|)$ be a Hilbert space, $\CP\subseteq\P$ be a large set, and $f\colon\N\to\Hilb$ be a bounded function. If there are $k_1,k_2\in\N$ such that for any $M\in\N$ and any $m_1\in\CP_M^{k_1},m_2\in\CP_M^{k_2}$, the averages
    $\E_{n\in[N]}\langle f(nm_1),f(nm_2)\rangle$
    converge, then
    \begin{equation}\label{eq_Katai variant cor}
        \limsup_{N\to\infty}\Big\|\E_{n\in[N]}f(n)\Big\|^2
        \leq \limsup_{M\to\infty}\lim_{N\to\infty}\logE_{\substack{m_1\in\CP_M^{k_1} \\ m_2\in\CP_M^{k_2}}}\E_{n\in[N]}\langle f(nm_1),f(nm_2)\rangle.
    \end{equation}
\end{Corollary}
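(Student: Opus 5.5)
The plan is to derive \cref{cor_Katai variant} directly from \cref{prop_Katai variant}. The idea is that, once $M$ is fixed, the inner expression on the right-hand side of \eqref{eq_Katai variant} is a finite weighted combination of averages of the form $\E_{n\in[N]}\langle f(nm_1),f(nm_2)\rangle$, each taken over a truncated range. Under the hypothesis each of these converges, so the $\limsup_{N\to\infty}$ becomes a genuine limit, and the cutoff can be removed term by term.

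\emph{Step 1.} Fix $k_1,k_2$ as in the hypothesis and fix $M$. The index set $\CP_M^{k_1}\times\CP_M^{k_2}$ is finite, and the logarithmic weights $\frac{1}{m_1m_2}\big/\big(\L(\CP_M^{k_1})\L(\CP_M^{k_2})\big)$ do not depend on $N$. Therefore
\[
\limsup_{N\to\infty}\logE_{\substack{m_1\in\CP_M^{k_1}\\ m_2\in\CP_M^{k_2}}}\big(\cdots\big)
\leq \logE_{\substack{m_1\in\CP_M^{k_1}\\ m_2\in\CP_M^{k_2}}}\limsup_{N\to\infty}\big(\cdots\big),
\]
with equality, and with $\limsup$ replaced by $\lim$, as soon as each inner term converges.

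\emph{Step 2.} Fix a pair $(m_1,m_2)$ and set $K=\max\{m_1,m_2\}$, which is bounded in terms of $M$ only. Write $g(n)=\langle f(nm_1),f(nm_2)\rangle$; this is a bounded sequence, and by hypothesis $L(m_1,m_2):=\lim_{N}\E_{n\in[N]}g(n)$ exists. The truncated average in \eqref{eq_Katai variant} is an average of $g$ over the initial segment $n\leq N/K$. Since $K$ is fixed while $N/K\to\infty$, this is a \Cesaro{} average of $g$ along the subsequence of lengths $\lfloor N/K\rfloor\to\infty$. Such averages converge to the same limit $L(m_1,m_2)$: the error from replacing $\lfloor N/K\rfloor$ by the exact length is $\Oh(K/N)$, using the boundedness of $f$.

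One point needs care here. The cutoff expression in \cref{prop_Katai variant} must be read with the normalization coming from the \Turan--Kubilius argument. In that argument one writes $\E_{n\leq N}f(n)\approx \logE_{m}\,\E_{n\leq N/m}f(mn)$, so the cutoff average is the average over $[N/K]$, not $\frac{1}{N}\sum_{n\leq N/K}$. The latter would introduce a spurious factor $1/K$. So I would first check, from the proof of the proposition, that the truncated term is normalized by $N/K$ (or that any residual normalization factor is accounted for in the weights). With that reading the truncated term equals $\E_{n\in[N/K]}g(n)+\oh_{N\to\infty}(1)$, which converges to $L(m_1,m_2)$.

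\emph{Step 3.} Insert Step 2 into Step 1. For each fixed $M$ the inner $\limsup_N$ in \eqref{eq_Katai variant} equals
\[
\lim_{N\to\infty}\logE_{m_1,m_2}\E_{n\in[N]}\langle f(nm_1),f(nm_2)\rangle.
\]
Taking $\limsup_{M\to\infty}$ then yields \eqref{eq_Katai variant cor}.

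I expect the only delicate point to be this bookkeeping of the normalization of the cutoff in Step 2. Everything else is exchanging a finite sum with a limit.
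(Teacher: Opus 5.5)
There is a genuine gap. Your Steps 1 and 3 are fine, but everything rests on the normalization check you defer in Step 2, and that check cannot succeed.

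The paper's own proof does not renormalize the cutoff as you do. It keeps it literally, $\E_{n\in[N]}\1_{n\le N/\max\{m_1,m_2\}}=\frac1N\sum_{n\le N/\max\{m_1,m_2\}}$. It then compares this with the uncut average through the tail $n\in(N/\max\{m_1,m_2\},N]$, using that \Cesaro{} convergence on $[N]$ passes to $(N/A,N]$. But under that literal reading each cutoff term is at most $\sup_n\|f(n)\|^2/\max\{m_1,m_2\}$. The inner quantity in \eqref{eq_Katai variant} is therefore $\Oh\big(1/\L(\CP_M^{k_1})\big)$, so the right side vanishes, which is already absurd for $f\equiv 1$.

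Under your reading, trace the \Turan--Kubilius step honestly. You get $\logE_{m\in\CP_M^k}\E_{n\in[N/m]}f(nm)=\E_{n\in[N]}\L(\CP_M^k)^{-1}\sum_{m\in\CP_M^k}\1_{nm\le N}f(nm)+\oh_{N\to\infty}(1)$. The weight $1/m$ is consumed by the change of normalization, so the displayed identity in the proof of \cref{prop_Katai variant} is off by a factor $m$ in each term. After Cauchy--Schwarz, the pair $(m_1,m_2)$ carries the weight $\big(\L(\CP_M^{k_1})\L(\CP_M^{k_2})\max\{m_1,m_2\}\big)^{-1}$ in front of $\E_{n\in[N/\max\{m_1,m_2\}]}\langle f(nm_1),f(nm_2)\rangle$. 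It does not carry $\big(\L(\CP_M^{k_1})\L(\CP_M^{k_2})m_1m_2\big)^{-1}$. These weights are not a logarithmic average: for $\CP=\P$, $k_1=k_2=1$, their total mass tends to infinity. So there is no residual factor to absorb.

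More decisively, no bookkeeping can rescue Step 2. With your normalization and the convergence hypothesis, \cref{prop_Katai variant} simply is the corollary, and the corollary is false in this generality. Take $\Hilb=\C$, $f(n)=n^{i}$, $\CP=\P$, $k_1=k_2=1$.
\begin{itemize}
\item The hypothesis holds, since $\langle f(np_1),f(np_2)\rangle=p_1^{i}p_2^{-i}$ does not depend on $n$.
\item The right side of \eqref{eq_Katai variant cor} equals $\limsup_{M\to\infty}\big|\logE_{p\le M}p^{i}\big|^2=0$. Indeed, $\sum_{p\le M}p^{-1+i}$ stays bounded by the prime number theorem, while $\sum_{p\le M}p^{-1}\sim\log\log M$.
\item On the other hand $\E_{n\in[N]}n^{i}=\frac{N^{i}}{1+i}+\oh_{N\to\infty}(1)$, so the left side equals $1/2$.
\end{itemize}
The logarithmic weighting spreads $(m_1,m_2)$ over all scales up to $M$ and averages away a correlation of modulus one. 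This is exactly the cancellation that classical \Katai-type criteria never exploit: they either require the correlations with $m_1\neq m_2$ to vanish individually, or bound them in absolute value.

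So what is missing is not a limit exchange but a correct input. The statement has to be modified, or restricted using additional structure of $f$, before any derivation along your lines (or the paper's) can go through.
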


\begin{proof}[Proof of \cref{prop_Katai variant}$\implies$\cref{cor_Katai variant}]
For any $k_1,k_2,M\in\N$, we have
\begin{align*}
    & \limsup_{N\to\infty}\Bigg|\logE_{\substack{m_1\in\CP_M^{k_1} \\ m_2\in\CP_M^{k_2}}} \E_{n\in[N]} \langle f(nm_1),f(nm_2)\rangle \\
    & \hspace*{4.1cm} - \logE_{\substack{m_1\in\CP_M^{k_1} \\ m_2\in\CP_M^{k_2}}} \E_{n\in[N]} \1_{n\leq N/\max\{m_1,m_2\}} \langle f(nm_1),f(nm_2)\rangle\Bigg| \\
    & = \limsup_{N\to\infty} \Bigg|\logE_{\substack{m_1\in\CP_M^{k_1} \\ m_2\in\CP_M^{k_2}}} \frac{1}{N}\sum_{n=(N/\max\{m_1,m_2\})+1}^N \langle f(nm_1),f(nm_2)\rangle\Bigg| \\
    & = \Bigg|\logE_{\substack{m_1\in\CP_M^{k_1} \\ m_2\in\CP_M^{k_2}}}\bigg(1-\frac{1}{\max\{m_1,m_2\}}\bigg) \lim_{N\to\infty}\E_{n\in(N/\max\{m_1,m_2\},N]}\langle f(nm_1),f(nm_2)\rangle\Bigg|.
    \end{align*}
Now, we use the fact that if the \Cesaro{} averages of sequence $(a_n)_{n\in\N}$ over the interval $[N]$ converge to some $L\in\R$, then for any $A\in\N$ the same holds for the \Cesaro{} averages over the interval $[N/A,N]$.
We can therefore rewrite the last expression as
    
    \begin{align*}
    \Bigg|\logE_{\substack{m_1\in\CP_M^{k_1} \\ m_2\in\CP_M^{k_2}}} & \bigg(1-\frac{1}{\max\{m_1,m_2\}}\bigg) \lim_{N\to\infty}\E_{n\in[N]} \langle f(nm_1),f(nm_2)\rangle\Bigg| \\
    & \leq \lim_{N\to\infty}\Bigg|\logE_{\substack{m_1\in\CP_M^{k_1} \\ m_2\in\CP_M^{k_2}}} \E_{n\in[N]} \langle f(nm_1),f(nm_2)\rangle\Bigg| + \Bigg|\logE_{\substack{m_1\in\CP_M^{k_1} \\ m_2\in\CP_M^{k_2}}}\frac{1}{\max\{m_1,m_2\}}\Bigg| \\
    & = \lim_{N\to\infty}\logE_{\substack{m_1\in\CP_M^{k_1} \\ m_2\in\CP_M^{k_2}}} \E_{n\in[N]} \langle f(nm_1),f(nm_2)\rangle + \oh_{M\to\infty}(1).
\end{align*}

Therefore, using the triangle inequality, we get
\begin{multline*}
    \limsup_{M\to\infty}\lim_{N\to\infty}\logE_{\substack{m_1\in\CP_M^{k_1} \\ m_2\in\CP_M^{k_2}}} \E_{n\in[N]}\1_{n\leq N/\max\{m_1,m_2\}}\langle f(nm_1),f(nm_2)\rangle \\
    \leq \limsup_{M\to\infty}\lim_{N\to\infty}\logE_{\substack{m_1\in\CP_M^{k_1} \\ m_2\in\CP_M^{k_2}}} \E_{n\in[N]}\langle f(nm_1),f(nm_2)\rangle.
\end{multline*}
Combining this with \eqref{eq_Katai variant} yields \eqref{eq_Katai variant cor}, concluding the proof.
\end{proof}

Next, we focus on proving \cref{prop_Katai variant}, for which
we require some preliminary lemmas. The first one is a variant of the \Turan-Kubilius inequality, which was originally stated and proven by Bergelson and Richter \cite{Bergelson-Richter} for sequences in $\C$. It extends the classical \Turan-Kubilius inequality that involves averaging over primes to averages over integers that share few divisors on average. 
The proof of the lemma for an arbitrary Hilbert space follows similar reasoning, so the proof is omitted.

\begin{Lemma}[Cf.~{\cite[Proposition 2.1]{Bergelson-Richter}}]
\label{br_lemma1}
Let $(\Hilb,\|\cdot\|)$, $B\subseteq\N$ be finite and non-empty and $f\colon\N\to\Hilb$ be bounded. Then we have 
$$\limsup_{N\to\infty}\Big\|\E_{n\in[N]}f(n) - \logE_{m\in B}\E_{n\in[N/m]}f(nm)\Big\| \ll \Big(\logE_{m,n\in B}(n,m)-1\Big)^{1/2},$$
where $(n,m)$ denotes the greatest common divisor of $n$ and $m$.
\end{Lemma}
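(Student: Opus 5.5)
The statement to prove is \cref{br_lemma1}, the Hilbert-space version of the Bergelson--Richter \Turan-Kubilius variant. I would follow the classical \Turan-Kubilius argument: write the average over $n$ as a weighted sum over $m\in B$, insert divisibility indicators, and bound the resulting error by Cauchy--Schwarz, using only the Hilbert-space inner product. Set $L:=\sum_{m\in B}1/m$ and $w(n):=\frac1L\sum_{m\in B,\, m\mid n}1$, so that $\E_{n\in[N]}w(n)\approx 1$.

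\textbf{Step 1 (reindexing).} I would first show that
\[
\logE_{m\in B}\E_{n\in[N/m]}f(nm)=\frac{1}{L}\sum_{m\in B}\frac1m\cdot\frac{m}{N}\sum_{k\le N,\ m\mid k}f(k)+\oh_{N\to\infty}(1)=\E_{k\in[N]}w(k)f(k)+\oh_{N\to\infty}(1).
\]
Since $B$ is finite and $f$ is bounded, the discrepancy between $|[N/m]|$ and $N/m$ contributes $\Oh_B(1/N)$.

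\textbf{Step 2 (Cauchy--Schwarz).} The difference in the lemma is therefore $\E_{k\in[N]}(1-w(k))f(k)+\oh(1)$. By the triangle inequality and Cauchy--Schwarz, its norm is at most
\[
\sup\|f\|\Big(\E_{k\in[N]}|1-w(k)|^2\Big)^{1/2}.
\]
This is the only place the Hilbert space enters, and the scalar proof transfers verbatim. I am taking $f$ to be $1$-bounded, or else absorbing $\sup\|f\|$ into the implied constant.

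\textbf{Step 3 (the variance).} Expand $\E_{k\in[N]}|1-w(k)|^2=1-2\E w+\E w^2$. From $\E_{k\le N}\1_{m\mid k}\to 1/m$ we get $\E w\to 1$. From $\E_{k\le N}\1_{m\mid k}\1_{m'\mid k}\to 1/\mathrm{lcm}(m,m')=(m,m')/(mm')$ we get
\[
\E w^2\to \frac1{L^2}\sum_{m,m'\in B}\frac{(m,m')}{mm'}=\logE_{m,n\in B}(n,m).
\]
Hence the limsup of the variance is $\logE_{m,n\in B}(n,m)-1$, which is nonnegative since $(n,m)\ge 1$. Taking square roots gives the claim.

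The main obstacle is purely bookkeeping: handling the boundary error in Step 1 and checking that the truncations $[N/m]$ match the definition of the logarithmic average. Because $B$ is fixed and finite, all these errors vanish as $N\to\infty$, so I expect no genuine difficulty.
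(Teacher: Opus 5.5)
Your proposal is correct and is essentially the argument the paper has in mind. The paper omits the proof and defers to Bergelson--Richter's Proposition 2.1, whose proof is exactly this Tur\'an--Kubilius variance computation with the weight $w(k)=\frac1L\sum_{m\in B}\1_{m\mid k}$; the only change needed for Hilbert-space-valued $f$ is the bound $\|\E_{k\in[N]}(1-w(k))f(k)\|\le \sup_n\|f(n)\|\,(\E_{k\in[N]}|1-w(k)|^2)^{1/2}$ that you use in Step 2, with the implied constant depending on $\sup_n\|f(n)\|$.
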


We will take $B$ to be a set of almost primes coming from a fixed $\CP\subseteq \P$, so we need an estimate for the right-hand side in this case.

\begin{Lemma}\label{lemma_bounding double log average of the gdc-1}
Let $\CP\subseteq\P$. For any $M,k\in\N$, we have
$$\logE_{n,m\in\CP_M^k} (n,m)-1 \ll \bigg(\frac{1}{\L(\CP_M)}+1\bigg)^k - 1,$$
where $\CP_M^k$ is as in \eqref{almost primes finite set}. In addition, if $\sum_{p\in\CP}\frac{1}{p}=\infty$, then for all $k\in\N$, we have
$$\lim_{M\to\infty}\logE_{n,m\in\CP_M^k}(n,m)-1 = 0.$$
\end{Lemma}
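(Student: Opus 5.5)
We compute the double logarithmic average directly, grouping pairs $(n,m)\in\CP_M^k\times\CP_M^k$ by their greatest common divisor. Write $n=p_1\cdots p_k$, $m=q_1\cdots q_k$ with distinct primes from $\CP\cap[M]$. Since both are squarefree, $(n,m)=\prod_{p\in S}p$, where $S$ is the set of shared prime factors. Put $L=\L(\CP_M)$ and $D=\L(\CP_M^k)^2$, the denominator of the double logarithmic average.

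First we record the size of $\L(\CP_M^k)$. Expanding $L^k=\big(\sum_{p}1/p\big)^k$, the terms with pairwise distinct primes contribute $k!\,\L(\CP_M^k)$. The remaining terms are bounded by $\binom{k}{2}\big(\sum_p p^{-2}\big)L^{k-2}\ll_k L^{k-2}$. Hence $k!\,\L(\CP_M^k)=L^k+\Oh_k(L^{k-2})$, and in particular $\L(\CP_M^k)\leq L^k/k!$. For the lower bound we only need the regime where $L$ is large, since $\CP$ is large in the second assertion.

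Next we bound the numerator. Fix $j\in\{0,\dots,k\}$ and consider the pairs sharing exactly $j$ primes. Their contribution to $\sum_{n,m}\frac{(n,m)-1}{nm}$ is at most
\[
\sum_{d\in\CP_M^j}\frac{d}{d^2}\,\L(\CP_M^{k-j})^2 \;\leq\; \L(\CP_M^j)\,\L(\CP_M^{k-j})^2,
\]
with the $j=0$ term contributing nothing because $(n,m)-1=0$ there. The inequality comes from writing $n=dn'$ and $m=dm'$. Dropping the constraint that the cofactors are disjoint from $d$ and from each other only increases the sum, and any combinatorial multiplicity from the ordering of prime factors is absorbed into the constants depending on $k$.

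Using $\L(\CP_M^i)\leq L^i/i!$, the $j$-th term is $\ll_k L^{j}L^{2(k-j)}$. Dividing by $D\asymp_k L^{2k}$ gives $\ll_k L^{-j}$. Summing over $j\ge1$ yields
\[
\logE_{n,m\in\CP_M^k}(n,m)-1 \;\ll_k\; \sum_{j=1}^k\binom{k}{j}L^{-j} = (1/L+1)^k-1,
\]
which is the claimed bound, with an implied constant depending on $k$.

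The main obstacle is the lower bound $D\gg_k L^{2k}$ when $L$ is small, since the expansion above only gives $k!\,\L(\CP_M^k)\geq L^k-C_kL^{k-2}$. When $L$ is bounded, though, the right-hand side $(1/L+1)^k-1$ is bounded below by a positive constant. The left-hand side is trivially $\ll \max_{n,m}(n,m)\le M^k$, but this is not uniform in $M$. Instead, note directly that $(n,m)\le\sqrt{nm}$, and control the large-$L$ regime separately. For $L\ge L_0(k)$ the lower bound $\L(\CP_M^k)\ge L^k/(2k!)$ holds, and the argument above goes through. For $L<L_0$ one may need to adjust the constant, or read the statement as being used only for large $L$; this is the point I expect to require the most care.

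The second assertion is immediate: if $\CP$ is large, then $L=\L(\CP_M)\to\infty$ as $M\to\infty$, so $(1/L+1)^k-1\to0$.
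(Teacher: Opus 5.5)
Your argument follows the paper's proof. Both split the pairs $(n,m)$ according to $d=(n,m)\in\CP_M^j$ and bound the $j$-th class by writing $n=dn'$, $m=dm'$. The paper gets $\binom{k}{j}\L(\CP_M^k)\L(\CP_M^{k-j})$ there; you get the slightly weaker $\L(\CP_M^j)\L(\CP_M^{k-j})^2$. Both then divide by $\L(\CP_M^k)^2$ and compare $\L(\CP_M^i)$ with $\L(\CP_M)^i$. As you say, this works once $\L(\CP_M)\geq L_0(k)$, and gives the bound with a $k$-dependent constant. That already proves the second assertion, which is the only part used later (in \cref{prop_Katai variant}, where $\CP$ is large).

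The case you left open cannot be closed, because the first inequality is false there, even with a constant depending on $k$. Take $k=2$, $\CP=\{2,q\}$ with $q$ a large prime, and $M\geq q$. Then $\CP_M^2=\{2q\}$, so the left-hand side equals $2q-1$. On the other hand $\L(\CP_M)>1/2$, so the right-hand side is smaller than $8$.

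The paper's proof skips over this regime in its last two steps. The inequality $\L(\CP_M^k)\L(\CP_M^{k-j})\leq \L(\CP_M^k)^2/\L(\CP_M^j)$ amounts to $\L(\CP_M^j)\L(\CP_M^{k-j})\leq\L(\CP_M^k)$. In fact $\L(\CP_M^j)\L(\CP_M^{k-j})\geq\binom{k}{j}\L(\CP_M^k)$, so for $1\leq j<k$ the inequality goes the wrong way. The next step, $1/\L(\CP_M^j)\ll 1/\L(\CP_M)^j$, needs $\L(\CP_M)$ to be large in terms of $j$.

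So the correct form of the first assertion is the one you actually prove: the bound holds with $\ll_k$ whenever $\L(\CP_M)\geq L_0(k)$. Alternatively, it holds for all $M$ if the constant may depend on $\CP$. The remaining hedging in your last paragraph, including the remark about $(n,m)\leq\sqrt{nm}$, can simply be dropped.
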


\begin{proof}
We start by noting that
$$\logE_{n,m\in\CP_M^k} (n,m)-1
= \frac{1}{\L(\CP_M^k)^2}\sum_{j=1}^k\sum_{\substack{n,m\in\CP_M^k \\ (n,m)\in \CP_M^j}}\frac{(n,m)-1}{nm}.$$
For any $j=1,\ldots,k$, we have
\begin{multline*}
    \sum_{\substack{n,m\in\CP_M^k \\ (n,m)\in\CP_M^j}} \frac{(n,m)-1}{nm}
    = \sum_{\substack{p_1,\ldots,p_k\in\CP\cap[M] \\ \text{pairwise distinct}}}\frac{1}{p_1\cdot\ldots\cdot p_k}
    \sum_{\substack{q_1,\ldots,q_k\in\CP\cap[M] \\ \text{pairwise distinct} \\ q_{i_1},\ldots,q_{i_j}\in\{p_1,\ldots,p_k\}}} \frac{q_{i_1}\cdot\ldots\cdot q_{i_j} - 1}{q_1\cdot\ldots\cdot q_k} \\
    \leq {\binom{k}{j}} \sum_{n\in\CP_M^k}\frac{1}{n} \sum_{m\in\CP_M^{k-j}} \frac{1}{m}
    = {\binom{k}{j}} \L(\CP_M^k)\L(\CP_M^{k-j})
    \leq {\binom{k}{j}}\frac{\L(\CP_M^k)^2}{\L(\CP_M^j)}.
\end{multline*}
Then we conclude that
$$\logE_{n,m\in\CP_M^k} (n,m)-1 
\leq \sum_{j=1}^k {\binom{k}{j}}\frac{1}{\L(\CP_M^j)} 
\ll \sum_{j=1}^k {\binom{k}{j}} \frac{1}{\L(\CP_M)^j}
= \bigg(\frac{1}{\L(\CP_M)}+1\bigg)^k - 1.$$

Finally, if $\sum_{p\in\CP}\frac{1}{p}=\infty$, then for any $k\in\N$, we have
$$\limsup_{M\to\infty} \logE_{n,m\in\CP_M^k} (n,m)-1 \ll \bigg(\frac{1}{\sum_{p\in\CP}\frac{1}{p}}+1\bigg)^k - 1 = 0.$$
\end{proof}

\begin{proof}[Proof of \cref{prop_Katai variant}]
    We start by applying \cref{br_lemma1} with $B=\CP_M^k$ for any $k=1,\ldots,K$, and \cref{lemma_bounding double log average of the gdc-1} to get
    $$\lim_{M\to\infty}\limsup_{N\to\infty}\Big\|\E_{n\in[N]}f(n) - \logE_{m\in\CP_M^k}\E_{n\in[N/m]}f(nm)\Big\| = 0.$$
    Thus, 
    \begin{align*}
        \limsup_{M\to\infty}\limsup_{N\to\infty}\Big\| & \E_{n\in[N]}f(n) - \E_{k\in[K]}\logE_{m\in\CP_M^k}\E_{n\in[N/m]}f(nm)\Big\| \\
        & \leq \limsup_{M\to\infty}\limsup_{N\to\infty}\E_{k\in[K]}\Big\|\E_{n\in[N]}f(n) - \logE_{m\in\CP_M^k}\E_{n\in[N/m]}f(nm)\Big\| \\
        & \leq \limsup_{M\to\infty}\limsup_{N\to\infty}\sup_{k\in[K]}\Big\|\E_{n\in[N]}f(n) - \logE_{m\in\CP_M^k}\E_{n\in[N/m]}f(nm)\Big\| \\
        & = \sup_{k\in[K]}\lim_{M\to\infty}\limsup_{N\to\infty}\Big\|\E_{n\in[N]}f(n) - \logE_{m\in\CP_M^k}\E_{n\in[N/m]}f(nm)\Big\| = 0.
    \end{align*}
This shows that it suffices to prove \eqref{eq_Katai variant} when substituting the averages in the left-hand side with $\E_{k\in[K]}\logE_{m\in\CP_M^k}\E_{n\in[N/m]}f(nm)$.
By changing the order of averaging and applying the Cauchy-Schwarz inequality, we infer that
\begin{align*}
    \Big\|\E_{k\in[K]}\logE_{m\in\CP_M^k}\E_{n\in[N/m]}f(nm)\Big\|^2
    & = \Big\|\E_{k\in[K]}\E_{n\in[N]}\logE_{m\in\CP_M^k}\1_{nm\leq N}f(nm)\Big\|^2 \\
    & \leq \E_{n\in[N]}\Big\|\E_{k\in[K]}\logE_{m\in\CP_M^k}\1_{nm\leq N}f(nm)\Big\|^2.
\end{align*}
By expanding the square and rearranging, we see that the latter is equal to
$$\E_{k_1,k_2\in[K]}\logE_{m_1\in\CP_M^{k_1}, m_2\in\CP_M^{k_2}}\E_{n\in[N]} \1_{n\leq N/\max\{m_1,m_2\}} \langle f(nm_1),f(nm_2)\rangle.$$
This concludes the proof of the proposition. 
\end{proof}

We conclude this subsection with a result illustrating how \cref{cor_Katai variant} will be used to bound the $L^2$-norm of the average in \eqref{main average general}.

\begin{Lemma}\label{Katai for actions}
    Let $(X,\mu, S_1,\ldots, S_\ell)$ be a finitely generated multiplicative system, $m\in\N$, and $a_1,\ldots,a_m\colon\N\to\N$ be nice finitely generated completely additive functions, such that for each $i\in[\ell]$,
    $S_{i,n}=T_{i,1}^{a_1(n)}\ldots T_{i,m}^{a_m(n)}$, for some commuting invertible measure-preserving transformations $T_{i,1},\ldots,T_{i,m}\colon X\to X$.
    Suppose that for some $j_0\in [m]$, $\CP(a_j)$ is large and, if we denote $C_{j_0}= \{i\in[\ell]\colon T_{i,j_0}\neq I\}$, then for any functions $G_i\in L^\infty(\mu)$ with $|G_i|=1$, $i\in C_{j_0}$, the averages 
    \begin{equation*}
        \lim_{N\to\infty}\E_{n\in[N]} \prod_{i\in C_{j_0}} S_{i,n}G_i
    \end{equation*}
    converge in $L^2(\mu)$.
    Then, for any functions $F_1,\ldots, F_\ell \in L^{\infty}(\mu)$ with $|F_i|=1$ for all $i\in[\ell]$, and any $K\in\N$,
    \begin{multline}\label{Katai for actions eq}
        \limsup_{N\to\infty} \Bigg\|\E_{n\in[N]} \prod_{i=1}^\ell S_{i,n}F_i\Bigg\|^2 \\
        \leq \lim_{N\to\infty} \E_{k_1,k_2\in[K]} \E_{n\in [N]} \int_X \prod_{i\in C_{j_0}} S_{i,n}\left(T_{i,j_0}^{k_1}F_i\cdot T_{i,j_0}^{k_2}\overline{F_i}\right)\, d\mu.
    \end{multline}
\end{Lemma}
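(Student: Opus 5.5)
The plan is to apply the \Katai-type inequality to the Hilbert-space-valued sequence $f(n)=\prod_{i=1}^\ell S_{i,n}F_i\in L^2(\mu)$, which is bounded since $|F_i|=1$. We take $\CP=\CP(a_{j_0})$, which is large by hypothesis. The statement averages over $k_1,k_2\in[K]$, so I would not use \cref{prop_Katai variant} as stated for fixed $k_1,k_2$. Instead I would use the stronger inequality its proof actually produces:
\begin{equation*}
\limsup_{N\to\infty}\Big\|\E_{n\in[N]}f(n)\Big\|^2\leq \limsup_{M\to\infty}\limsup_{N\to\infty}\E_{k_1,k_2\in[K]}\logE_{\substack{m_1\in\CP_M^{k_1}\\ m_2\in\CP_M^{k_2}}}\E_{n\in[N]}\1_{n\leq N/\max\{m_1,m_2\}}\langle f(nm_1),f(nm_2)\rangle .
\end{equation*}
This is exactly the expression obtained at the end of that proof, before any $k_1,k_2$ is fixed.

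The key algebraic step uses niceness. Let $m_1\in\CP_M^{k_1}$. Then $m_1$ is a product of $k_1$ distinct primes from $\CP(a_{j_0})$. Since $a_{j_0}(p)=1$ on $\CP(a_{j_0})$ and these primes lie outside $\CP(a_j)$ for $j\neq j_0$, we get $a_{j_0}(m_1)=k_1$ and $a_j(m_1)=0$ for $j\ne j_0$. Complete additivity then gives $S_{i,nm_1}=S_{i,n}T_{i,j_0}^{k_1}$. Using commutativity and measure preservation, this yields
\begin{equation*}
\langle f(nm_1),f(nm_2)\rangle=\int_X\prod_{i=1}^\ell S_{i,n}\big(T_{i,j_0}^{k_1}F_i\cdot T_{i,j_0}^{k_2}\overline{F_i}\big)\,d\mu .
\end{equation*}
For $i\notin C_{j_0}$ we have $T_{i,j_0}=I$, so the corresponding factor is $|F_i|^2=1$ and drops out. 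Thus the inner product depends only on $(k_1,k_2)$ and $n$, and not on the particular $m_1,m_2$.

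Next, set $G_i=T_{i,j_0}^{k_1}F_i\cdot T_{i,j_0}^{k_2}\overline{F_i}$ for $i\in C_{j_0}$. These satisfy $|G_i|=1$, so the hypothesis gives $L^2$-convergence of $\E_{n\in[N]}\prod_{i\in C_{j_0}}S_{i,n}G_i$. Integrating, the scalar averages $\E_{n\in[N]}\langle f(nm_1),f(nm_2)\rangle$ converge for every such $m_1,m_2$. This is precisely the hypothesis needed to remove the cutoff $\1_{n\le N/\max\{m_1,m_2\}}$ by the argument in the proof of \cref{cor_Katai variant}. The error incurred there is bounded by $\logE\frac{1}{\max\{m_1,m_2\}}=\oh_{M\to\infty}(1)$, uniformly over the finitely many $k_1,k_2\in[K]$, so it survives the extra average over $k_1,k_2$.

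After removing the cutoff, the logarithmic average over $m_1,m_2$ is an average of a quantity independent of $m_1,m_2$. It is well defined because $\CP_M^{k}$ is nonempty for $M$ large, as $\CP$ is infinite. The logarithmic average therefore disappears, and so does the $\limsup_M$. What remains is the limit in $N$ of $\E_{k_1,k_2\in[K]}\E_{n\in[N]}\int_X\prod_{i\in C_{j_0}}S_{i,n}(\cdots)\,d\mu$, which is the right-hand side of \eqref{Katai for actions eq}. I expect the only delicate point to be the bookkeeping in the first step: invoking the $K$-averaged form of the inequality rather than the fixed-$(k_1,k_2)$ statement, and checking that the cutoff removal commutes with the finite average over $k_1,k_2$.
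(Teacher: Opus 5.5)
Your route is the paper's own. Its proof of this lemma also applies \cref{cor_Katai variant}, in the $K$-averaged form, to $f(n)=\prod_{i}S_{i,n}F_i$ with $\CP=\CP(a_{j_0})$. It then performs your niceness computation $S_{i,nm}=T_{i,j_0}^{k}S_{i,n}$ for $m\in\CP_M^{k}$, and that computation is correct.

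The gap is the inequality you import: it is false. The error sits in the proof of \cref{prop_Katai variant}, so appealing to what that proof ``actually produces'' does not help. Take $\Hilb=\C$, $f\equiv 1$ and $\CP=\P$. The left side is $1$. On the right, $\E_{n\in[N]}\1_{n\leq N/\max\{m_1,m_2\}}\to 1/\max\{m_1,m_2\}\leq 1/m_1$, and $\logE_{m\in\CP_M^{k}}\frac{1}{m}\leq \zeta(2)/\L(\CP_M^{k})\to 0$, so your right side is $0$. The slip is the identity $\logE_{m}\E_{n\in[N/m]}f(nm)=\E_{n\in[N]}\logE_{m}\1_{nm\leq N}f(nm)$. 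The normalisation of $\E_{n\in[N/m]}$ cancels the weight $1/m$, so the left side is really $\E_{n\in[N]}\L(\CP_M^k)^{-1}\sum_{m\in\CP_M^k}\1_{nm\leq N}f(nm)$.

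Your cutoff bookkeeping has the same defect. Put $L_{m_1,m_2}:=\lim_{N}\E_{n\in[N]}\langle f(nm_1),f(nm_2)\rangle$. The cut expression tends to $\logE_{m_1,m_2}\frac{L_{m_1,m_2}}{\max\{m_1,m_2\}}=\oh_{M\to\infty}(1)$. So it differs from the uncut one by essentially all of $\logE_{m_1,m_2}L_{m_1,m_2}$, not by $\Oh(\logE_{m_1,m_2} 1/\max\{m_1,m_2\})$.

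Dropping the cutoff does not rescue matters. Take $f(n)=n^{it}$ with $t\neq 0$. The correlations $(m_1/m_2)^{it}$ converge, yet the right side of \cref{cor_Katai variant}, fixed or $K$-averaged, is $0$. In the averaged case it equals $\limsup_{M}|\E_{k\in[K]}\logE_{m\in\CP_M^k}m^{it}|^2$. Each inner average tends to $0$, because $\sum_p p^{-1+it}$ converges while $\L(\CP_M^k)\to\infty$. The left side, however, is $1/(1+t^2)$.

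What is missing is a way to move the ranges $[N/m]$ produced by \cref{br_lemma1} onto the common range $[N]$ before applying Cauchy--Schwarz in $n$. In other words, you need some form of $\|\E_{n\in[N/m]}f(nm)-\E_{n\in[N]}f(nm)\|\to 0$. If $\E_{n\in[N]}f(nm)$ converged for each $m$, this would be automatic. Then \cref{br_lemma1}, followed by Cauchy--Schwarz applied to $\E_{n\in[N]}\E_{k\in[K]}\logE_{m\in\CP_M^k}f(nm)$, followed by your computation, gives exactly the right side of \eqref{Katai for actions eq}.

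But $f(nm)$ is the full $\ell$-fold average with $F_i$ replaced by $T_{i,j_0}^{a_{j_0}(m)}F_i$. Its convergence is not among the hypotheses: those involve only the subfamily $C_{j_0}$, and hence only the correlations. It is also precisely what the induction is trying to prove. So an independent input is needed. One option is an a priori asymptotic dilation invariance of Ces\`{a}ro averages along nice additive functions. For instance, one could show that the law of $(a_1(n),\dots,a_m(n))$ for $n\in[N]$ is asymptotically unchanged in total variation under $N\mapsto N/m$. That is exactly the feature that excludes $n^{it}$-type behaviour here, and neither your proposal nor the cited corollary supplies it.
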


\begin{proof}
This lemma is a simple application of \cref{cor_Katai variant}. Without loss of generality, we assume that $j_0=1$. We wish to apply this corollary for the large set $\CP=\CP(a_1)$. Our assumption on the existence of the limits guarantees that the conditions of \cref{cor_Katai variant} are met. Then the left-hand side of \eqref{Katai for actions eq} is bounded by
\begin{equation}\label{bound1}
    \limsup_{M\to\infty} \lim_{N\to\infty} \E_{k_1,k_2\in[K]} \E_{\substack{m_1\in \CP_M^{k_1} \\ m_2\in \CP_M^{k_2}}}\E_{n\in[N]} \int_X \prod_{i=1}^\ell S_{i,nm_1}F_i\cdot S_{i,nm_2}\overline{F_i}\, d\mu.
\end{equation}
For each $k\in\N$, the set $\CP_M^k$ consists of $k$-almost primes with prime factors from the set $\CP(a_1)$, where the latter is disjoint from any other set $\CP(a_j)$, $j\neq1$. Thus, for any $m_1\in\CP_M^{k_1}$ and for any $i\in[\ell]$,
$$S_{i,nm_1} = \prod_{j=1}^m T_{i,j}^{a_j(nm_1)} = T_{i,1}^{a_1(n)+k_1}\prod_{j=2}^m T_{i,j}^{a_j(n)} = T_{i,1}^{k_1} S_{i,n}.$$
Using similar calculations for $\CP_M^{k_2}$, we can write \eqref{bound1} as
$$\lim_{N\to\infty} \E_{k_1,k_2\in[K]} \E_{n\in[N]} \int_X \prod_{i=1}^\ell S_{i,n}(T_{i,1}^{k_1}F_i\cdot T_{i,1}^{k_2}\overline{F_i})\, d\mu.$$
Finally, for each $i\in[\ell]\setminus C_1$, we have $T_{i,1}=I$, so $T_{i,1}^{k_1}F_i\cdot T_{i,1}^{k_2}\overline{F_i} = F_i\cdot\overline{F_i} = |F_i|^2 = 1$, and then the result follows.
\end{proof}

\subsection{Averages over $\CP$-free numbers}

For $\CP\subseteq\P$, we define the {\em set of $\CP$-free numbers} $\CQ_\CP$ to be the set of all positive integers whose prime factors lie outside $\CP$, namely,
\begin{equation}\label{P-free_defn}
    \CQ_\CP :=\{n\in\N\colon p\mid n\implies p\not\in\CP\}.
\end{equation}

\begin{Lemma}\label{lemma_ p free numbers density}
    If $\CP\subseteq \P$ is a thin set, then, 
    \begin{equation*}
        d(\CQ_{\CP})=\prod_{p\in \CP} \left(1-\frac{1}{p}\right)>0.
    \end{equation*}
\end{Lemma}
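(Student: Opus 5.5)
We prove the density formula by approximating $\CQ_\CP$ with sets defined by finitely many congruence conditions and controlling the tail using thinness. For $P\in\N$ let $\CP_{\le P}=\CP\cap[P]$ and let $\CQ^{(P)}$ be the set of $n\in\N$ not divisible by any $p\in\CP_{\le P}$. Since $\CP_{\le P}$ is finite, $\CQ^{(P)}$ is periodic with period $\prod_{p\in\CP_{\le P}}p$. By the Chinese remainder theorem (equivalently, inclusion–exclusion over squarefree products of primes in $\CP_{\le P}$), it has natural density
\begin{equation*}
d(\CQ^{(P)})=\prod_{p\in\CP_{\le P}}\Big(1-\frac1p\Big).
\end{equation*}

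Next compare $\CQ_\CP$ with $\CQ^{(P)}$. We have $\CQ_\CP\subseteq\CQ^{(P)}$, and every $n\in\CQ^{(P)}\setminus\CQ_\CP$ is divisible by some $p\in\CP$ with $p>P$. Hence for each $N$,
\begin{equation*}
\frac{|\CQ^{(P)}\cap[N]|}{N}-\sum_{p\in\CP,\,p>P}\frac{1}{p}\;\le\;\frac{|\CQ_\CP\cap[N]|}{N}\;\le\;\frac{|\CQ^{(P)}\cap[N]|}{N},
\end{equation*}
using $|\{n\le N: p\mid n\}|\le N/p$. Letting $N\to\infty$ shows that the $\limsup$ and $\liminf$ of $|\CQ_\CP\cap[N]|/N$ both lie in the interval
\begin{equation*}
\Big[\prod_{p\in\CP_{\le P}}(1-1/p)-\sum_{p\in\CP,\,p>P}1/p,\ \prod_{p\in\CP_{\le P}}(1-1/p)\Big].
\end{equation*}
Because $\CP$ is thin, the tail sum $\sum_{p\in\CP,\,p>P}1/p$ tends to $0$ as $P\to\infty$. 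Letting $P\to\infty$ therefore squeezes both limits to $\prod_{p\in\CP}(1-1/p)$, so the density exists and equals this product.

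It remains to show the product is positive. The factor $p=2$ gives $1-\tfrac12>0$, and every factor lies in $(0,1)$. For $p\ge3$ we have $\log(1-1/p)\ge -2/p$, so
\begin{equation*}
\sum_{p\in\CP}\log\Big(1-\frac1p\Big)\;\ge\;\log\frac12-2\sum_{p\in\CP}\frac1p\;>\;-\infty
\end{equation*}
by thinness, and the infinite product converges to a positive number.

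There is no real obstacle here. The only step needing care is the uniform tail bound: it must hold for all $N$ simultaneously, not just in the limit, so that the limits in $N$ and $P$ can be interchanged. This is exactly what the union bound over primes $p>P$ provides.
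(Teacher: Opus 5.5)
Your proof is correct and follows essentially the route the paper indicates. The paper omits the argument, describing it as M\"{o}bius inversion for finitely many primes (your CRT/inclusion--exclusion step) plus approximation of a thin set by a finite subset; your uniform bound by $\sum_{p\in\CP,\,p>P}1/p$ proves that approximation directly instead of citing it, and your positivity argument via $\log(1-1/p)\ge -2/p$ is fine (the inequality also holds at $p=2$).
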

\begin{Remark}
    Note that this lemma still holds for any subsets of primes, with the conclusion being that $d(\CQ_{\CP})=0$ when $\sum_{p\in \CP}\frac{1}{p}=\infty$, i.e., the set $\CP$ is large. 
\end{Remark}

The proof of this lemma is an exercise in \Mobius\ inversion. We omit its proof, as we use similar techniques in the proof of \cref{lemma_averages over naturals to averages over P free numbers} below.

Thin sets can be approximated very well by finite sets of primes, in the sense that set $\CQ_{\CP}$ can be approximated very well by a set $\CQ_{\CP'}$ with $\CP'\subseteq \P$ finite. This is the context of the following lemma.

\begin{Lemma}[{\cite[Lemma 3.7]{Charamaras-multiplicative}}]
\label{lemma_ approximating thin sets by finite sets}
    Let $\CP\subseteq \P$ be a thin set. Then for any $\e>0$, there exists a finite $\CP_{\e}\subseteq \CP$ such
that $d(\CQ_{\CP_{\e}}\setminus \CQ_{\CP})<\e$.
\end{Lemma}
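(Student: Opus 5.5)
The statement is \cref{lemma_ approximating thin sets by finite sets}. The idea is to take $\CP_\e=\CP\cap[M]$ for $M$ large and estimate the density of the difference set directly.

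Fix $\e>0$ and let $M$ be a cutoff to be chosen, with $\CP_\e:=\CP\cap[M]$; this is a finite subset of $\CP$. Since $\CP_\e\subseteq\CP$, we have $\CQ_{\CP}\subseteq\CQ_{\CP_\e}$. An integer $n$ lies in $\CQ_{\CP_\e}\setminus\CQ_{\CP}$ exactly when it has no prime factor in $\CP\cap[M]$ but is divisible by some prime $p\in\CP$ with $p>M$. Hence
\begin{equation*}
\CQ_{\CP_\e}\setminus\CQ_{\CP}\subseteq\bigcup_{p\in\CP,\,p>M} p\N .
\end{equation*}

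The natural bound is then
\begin{equation*}
\limsup_{N\to\infty}\frac{1}{N}\Big|\big(\CQ_{\CP_\e}\setminus\CQ_{\CP}\big)\cap[N]\Big|\le\limsup_{N\to\infty}\frac1N\sum_{p\in\CP,\,p>M}\Big\lfloor\frac Np\Big\rfloor\le\sum_{p\in\CP,\,p>M}\frac1p .
\end{equation*}
The right-hand side is a tail of the convergent series $\sum_{p\in\CP}1/p$, because $\CP$ is thin. So it is below $\e$ once $M$ is large enough.

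The one delicate point is that $d$ denotes natural density, so we should confirm that $d(\CQ_{\CP_\e}\setminus\CQ_{\CP})$ actually exists, rather than interpreting the claim as an upper-density bound. This follows from the earlier results. Since $\CQ_\CP\subseteq\CQ_{\CP_\e}$, the density of the difference is $d(\CQ_{\CP_\e})-d(\CQ_\CP)$, provided both densities exist. \cref{lemma_ p free numbers density}, applied to the thin sets $\CP_\e$ and $\CP$, gives their existence. For the finite set $\CP_\e$ existence is even elementary: $\CQ_{\CP_\e}$ is periodic modulo $\prod_{p\in\CP_\e}p$.

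This also yields an alternative, self-contained route: the difference equals $\prod_{p\in\CP_\e}(1-1/p)\big(1-\prod_{p\in\CP,\,p>M}(1-1/p)\big)$, which tends to $0$ as $M\to\infty$ by convergence of the infinite product. Either route finishes the proof. I expect no real obstacle beyond this bookkeeping about existence of densities.
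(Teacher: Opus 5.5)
Your proof is correct. The paper itself gives no argument for this lemma; it is quoted from \cite[Lemma 3.7]{Charamaras-multiplicative}, so there is nothing in the text to compare against. Your union bound $\CQ_{\CP\cap[M]}\setminus\CQ_{\CP}\subseteq\bigcup_{p\in\CP,\,p>M}p\N$ gives upper density at most $\tau_M:=\sum_{p\in\CP,\,p>M}1/p$, and this is the natural proof.

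One refinement concerns your ``alternative, self-contained route.'' It is not actually self-contained, because it needs $d(\CQ_\CP)=\prod_{p\in\CP}(1-1/p)$ for the \emph{infinite} set $\CP$. In this paper the proof of \cref{lemma_ p free numbers density} is omitted, with a pointer to the proof of \cref{lemma_averages over naturals to averages over P free numbers}. That proof, for infinite $\CP$, invokes exactly the lemma you are proving. So leaning on \cref{lemma_ p free numbers density} for infinite $\CP$ (in either of your routes) risks circularity.

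Your own tail bound removes that risk. Let $\delta_M:=\prod_{p\in\CP,\,p\le M}(1-1/p)$ be the density of the periodic set $\CQ_{\CP\cap[M]}$. Then you get $\delta_M-\tau_M\le\underline{d}(\CQ_\CP)\le\overline{d}(\CQ_\CP)\le\delta_M$. Letting $M\to\infty$ shows that $d(\CQ_\CP)$ exists and equals $\prod_{p\in\CP}(1-1/p)$. Hence $d(\CQ_{\CP\cap[M]}\setminus\CQ_\CP)=\delta_M-d(\CQ_\CP)$ exists and is at most $\tau_M$.

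For the application in \cref{lemma_averages over naturals to averages over P free numbers}, only the upper-density bound is actually needed.
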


The following lemma allows us to freely pass from averages over the interval $[N]$ to averages over the $\CP$-free numbers in this interval whenever we want to prove the convergence of either of the two.

\begin{Lemma}\label{lemma_averages over naturals to averages over P free numbers}
   Let $\CP\subseteq\ \P$ be a thin set, $m\in\N$, and $a_1,\ldots,a_m\colon\N\to\N$ be completely additive functions satisfying, for each $j\in[m]$, $a_j(p)=0$ for all $p\in \CP$. The following are equivalent for any sequence $F(\underline{n})_{\underline{n}\in\N^m}$.
   \begin{itemize}
       \item[(a)] The average $\E_{n\in[N]} F\left(a_1(n),\ldots, a_m(n)\right)$ converges to $L$ in $L^2(\mu)$ as $N\to\infty$.
       \item[(b)] The average $\E_{n\in[N]} \1_{\CQ_{\CP}}(n)F\left(a_1(n),\ldots, a_m(n)\right)$ converges to $L\prod_{p\in \CP}\big(1-\frac{1}{p}\big)$ in $L^2(\mu)$ as $N\to\infty$.
       \item[(c)] The average $\E_{n\in\CQ_{\CP}\cap[N]} F\left(a_1(n),\ldots, a_m(n)\right)$ converges to $L$ in $L^2(\mu)$ as $N\to\infty$. 
   \end{itemize}
\end{Lemma}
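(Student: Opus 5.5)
The key structural fact is that, since $a_j(p)=0$ for all $p\in\CP$ and the $a_j$ are completely additive, every $n\in\N$ factors uniquely as $n=dq$ with $q\in\CQ_{\CP}$ and $d$ a $\CP$-smooth number (all prime factors in $\CP$). Moreover $a_j(n)=a_j(q)$ for all $j$. Writing $G(n):=F(a_1(n),\ldots,a_m(n))$, we get $G(dq)=G(q)$. I would first dispose of (b)$\iff$(c), which is immediate from \cref{lemma_ p free numbers density}: $|\CQ_{\CP}\cap[N]|/N\to\delta:=\prod_{p\in\CP}(1-1/p)>0$, so the average in (b) equals $\frac{|\CQ_\CP\cap[N]|}{N}$ times the average in (c). Hence one converges to $\delta L$ iff the other converges to $L$, and $\delta\neq0$ is used in the reverse direction. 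Throughout I assume $G$ is uniformly bounded in $L^2$ (or $L^\infty$), as in all applications.

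Next I would prove (b)$\Rightarrow$(a). Set $D$ to be the set of $\CP$-smooth numbers and $B(N):=\sum_{n\le N}\1_{\CQ_\CP}(n)G(n)$. The factorization gives
\[
\sum_{n\le N}G(n)=\sum_{d\in D}B(N/d),
\]
so
\[
\E_{n\in[N]}G(n)=\sum_{d\in D}\frac1d\cdot\frac{B(N/d)}{N/d}.
\]
Each term converges to $\frac1d\delta L$ by (b). Since $\sum_{d\in D}1/d=\prod_{p\in\CP}(1-1/p)^{-1}=\delta^{-1}<\infty$ (thinness), and each term is bounded by $\frac1d\sup\|G\|$, dominated convergence (a tail estimate over $d$) gives the limit $\delta^{-1}\delta L=L$.

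For (a)$\Rightarrow$(b) I would invert this relation with \Mobius\ inversion over $D$. The function $\1_D$ has Dirichlet inverse $\mu\1_D$, which is supported on squarefree $\CP$-smooth numbers. Hence
\[
B(N)=\sum_{d\in D}\mu(d)\sum_{n\le N/d}G(n),
\]
which follows by substituting the previous identity and using $\sum_{e\mid k,\,e\in D}\mu(e)=\1_{k=1}$ for $k\in D$. Dividing by $N$ gives
\[
\frac{B(N)}{N}=\sum_{d}\frac{\mu(d)}{d}\,\E_{n\in[N/d]}G(n)\to L\sum_{d}\frac{\mu(d)}{d}=L\prod_{p\in\CP}\Big(1-\frac1p\Big)=\delta L.
\]
Again this is justified by dominated convergence, since $\sum_d|\mu(d)|/d=\prod_p(1+1/p)<\infty$.

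The only delicate point is the interchange of limits with these infinite sums. When $\CP$ is infinite, the sums over $d$ are infinite, but only $d\le N$ contribute, and absolute summability of $1/d$ over $D$ controls the tails uniformly in $N$. Alternatively one could first reduce to finite $\CP$ via \cref{lemma_ approximating thin sets by finite sets}, but the direct summability argument suffices.
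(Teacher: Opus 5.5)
Your proof is correct, but it is organized differently from the paper's. The paper first treats finite $\CP$: \Mobius\ inversion over the divisors of $W=\prod_{p\in\CP}p$ gives the identity $\E_{n\in[N]}\1_{(n,W)=1}G(n)=\sum_{d\mid W}\frac{\mu(d)}{d}\E_{n\in[N/d]}G(n)$, which involves only finitely many terms and so raises no convergence issues. It then passes to an arbitrary thin $\CP$ by approximating it with a finite $\CP_\e\subseteq\CP$ via \cref{lemma_ approximating thin sets by finite sets}, using $|G|\le 1$ to bound the error by $d(\CQ_{\CP_\e}\setminus\CQ_\CP)$.

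You instead handle infinite thin $\CP$ directly. The factorization $n=dq$ into a $\CP$-smooth part and a $\CP$-free part gives the convolution identity $\sum_{n\le N}G(n)=\sum_{d\in D}B(N/d)$. Its \Mobius\ inverse is the paper's identity, now summed over all squarefree $\CP$-smooth $d$. Thinness enters only through the absolute convergence of $\sum_{d\in D}1/d=\delta^{-1}$ and $\sum_{d\in D}|\mu(d)|/d=\prod_{p\in\CP}(1+1/p)$, which is exactly what dominated convergence needs.

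Your route avoids the approximation lemma. It also gives an explicit proof of (b)$\Rightarrow$(a), which the paper leaves implicit: its finite identity expresses the $\CP$-free average in terms of the full averages, not the reverse, and its approximation step only transfers (a)$\Rightarrow$(b) to infinite $\CP$. As a byproduct, your (a)$\Rightarrow$(b) with $G\equiv 1$ reproves the density lemma. Finally, your boundedness assumption on $G$ can be dropped, so you get the lemma for arbitrary sequences as stated. In each direction the convergence hypothesis already gives $\sup_{x\ge 1}\|S(x)\|/x<\infty$ (respectively $\sup_{x\ge1}\|B(x)\|/x<\infty$), where $S(x)=\sum_{n\le x}G(n)$. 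That bound serves equally well as the dominating function.
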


\begin{proof}
Our argument is a simple application of \Mobius\ inversion, combined with \cref{lemma_ approximating thin sets by finite sets}. We prove it for $m=1$, as the general case is identical.
    
Firstly, we handle the case when $\CP$ is finite. Write $\CP=\{p_1,\ldots ,p_{r}\}$ and let $W=p_1\ldots p_r$. 
Using the identity $\1_{n=1}=\sum_{d\mid n} \mu(d)$, and using the fact that $a_1(dn)=a_1(n)+a_1(d)=a_1(n)$ for every $d\mid W$, we calculate 
\begin{multline}\label{mobius inversion app}
    \E_{n\in[N]} \1_{(n,W)=1} F(a_1(n))
    = \E_{n\in[N]} F(a_1(n))\Bigg(\sum_{d|(n,W)}\mu(d)\Bigg) \\
    = \sum_{d|W}\mu(d) \Bigg(\frac{1}{N}\sum_{n\leq N/d} F(a_1(dn))\Bigg)=\sum_{d\mid W}\frac{\mu(d)}{d}\E_{n\in [N/d]}F(a_1(n)).
\end{multline}
In addition, by Euler products and \cref{lemma_ p free numbers density}, we have
\begin{equation}\label{density equation finite case}
    \sum_{d|W}\frac{\mu(d)}{d}=\prod_{p\in \CP}\left(1-\frac{1}{p}\right) = d(\CQ_\CP).
\end{equation}
The equivalence of (a), (b) and (c) follows immediately by combining \eqref{mobius inversion app} and \eqref{density equation finite case}.

Now, we prove the general case. It suffices to show that statements (b) and (c) follow from the finite case. We will show this claim for (b), since the proof for (c) is identical. So, assume that (b) holds if the set $\CP$ is finite. Let $\CP$ be a thin set and $\e>0$. By \cref{lemma_ approximating thin sets by finite sets}, there is a finite set $\CP_\e\subseteq\CP$ such that $d(\CQ_{\CP_\e}\setminus\CQ_\CP)<\epsilon$.
It follows that
\begin{multline*}
    \Big|\E_{n\leq N} \1_{\CQ_{\CP_\e}}(n) F(a_1(n))-\E_{n\leq N} \1_{\CQ_{\CP}}(n) F(a_1(n))\Big| \\
    \leq \E_{n\leq N} \1_{\CQ_{\CP_\e}\setminus\CQ_{\CP}}(n)=d(\CQ_{\CP_\e}\setminus \CQ_{\CP})+\oh_{N\to\infty}(1).
\end{multline*}
Then, using the triangle inequality, we have
\begin{align*}
   \Big|\E_{n\leq N}\1_{\CQ_{\CP}}(n) & F(a_1(n))-Ld(\CQ_{\CP})\Big| \\
   & \leq \Big|\E_{n\leq N} \1_{\CQ_{\CP_M}}(n) F(a_1(n))-\E_{n\leq N}\1_{\CQ_{\CP}}(n) F(a_1(n))\Big| \\
   & + \Big|\E_{n\leq N} \1_{\CQ_{\CP_M}}(n) F(a_1(n))-Ld(\CQ_{\CP_M})\Big| + |L|\cdot|d(\CQ_{\CP_M})-d(\CQ_{\CP})| \\
   & \leq (|L|+1)d(\CQ_{\CP_M}\setminus \CQ_{\CP})+\oh_{N\to\infty}(1)\leq (|L|+1)\e + \oh_{N\to\infty}(1).
\end{align*}
Sending $N\to\infty$ and then $\e\to0^+$, we reach the desired conclusion.
\end{proof}

\section{Convergence of multiple averages}
\label{section_proof}

\subsection{Simplifying the average in \cref{main theorem}}

Let $(X,\mu,S_1,\ldots,S_\ell)$ be a system. In view of \cref{fg actions characterization lemma}, for each $i\in[\ell]$, we have the following: there exist $m_i\in\N$ commuting invertible measure-preserving transformations $R_{i,1},\ldots,R_{i,m_i}\colon X\to X$ that are pairwise distinct, and some nice finitely generated completely additive functions $b_{i,1},\ldots,b_{i,m_i}\colon\N\to\N$, such that for all $n\in\N$,
$$S_{i,n} = R_{i,1}^{b_{i,1}(n)}\ldots R_{i,m_i}^{b_{i,m_i}(n)}.$$
In addition, for each $i$, we define 
$$\CP(b_i) := \bigcup_{j=1}^{m_i}\CP(b_{i,j}).$$

To prove \cref{main theorem}, we have to show that for any $F_1,\ldots,F_\ell\in L^\infty(\mu)$, 
\begin{equation}\label{main average general 2}
    \lim_{N\to\infty}\E_{n\in[N]}\prod_{i=1}^\ell\prod_{j=1}^{m_i}R_{i,j}^{b_{i,j}(n)}F_i
\end{equation}
exists in $L^2(\mu)$. 

We proceed to simplify the statement of what we want to show:

(a) We may assume the functions $F_i$, $i\in[\ell]$, satisfy $|F_i|=1$, and, in fact, this can be assumed for any $F\in L^\infty(\mu)$ appearing in the proof. To see why, first observe that we can always assume that $\|F\|_\infty=1$, and then note that any such function can be written as $F=\frac{G+H}{2}$ for two measurable functions $G,H$ with $|G|=|H|=1$.

(b) All the sets $\CP(b_{i,j})$ can be assumed to be either large or empty. To see why, first, we note that in view of \cref{lemma_averages over naturals to averages over P free numbers}, if some $\CP(b_{i,j})$ is thin, then we can restrict the average over $\CQ_{\CP(b_{i,j})}$, thus, $R_{i,j}^{b_{i,j}(n)}$ will vanish from the average. Then, we can go back (using the same lemma) and consider the average over the natural numbers again.

(c) Finally, we have the following.

\begin{Claim}\label{claim}
    There is $m_0\in\N$, some nice finitely generated completely additive functions $a_1,\ldots,a_{m_0}\colon\N\to\N$ with $\CP(a_j)$ large for all $j\in[m_0]$, and some commuting invertible measure-preserving transformations $T_{i,j}\colon X \to X$ for $i\in[\ell]$ and $j\in[m_0]$ (allowing now $T_{i,j}=I$, and $T_{i,j}=T_{i,j'}$ for distinct $j,j'$), such that for all $n\in\N$,
$$S_{i,n} = \prod_{j=1}^{m_0} T_{i,j}^{a_j(n)}.$$
\end{Claim}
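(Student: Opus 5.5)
The idea is to build a common refinement of the prime partitions attached to the different actions $S_i$. By \cref{fg actions characterization lemma}, each $S_i$ has a representation $S_{i,n}=\prod_{j=1}^{m_i}R_{i,j}^{b_{i,j}(n)}$. Here the $b_{i,j}$ are nice, so each takes values $\{0,1\}$ on primes, and the sets $\CP(b_{i,j})$, $j\in[m_i]$, are pairwise disjoint. By the reduction (b), every nonempty $\CP(b_{i,j})$ may be assumed large; empty ones contribute $b_{i,j}\equiv 0$ and can be discarded.

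For each $i$, set $\CP(b_{i,0}):=\P\setminus\CP(b_i)$, the primes on which $S_{i,p}=I$. The cells
$$\CP_{\underline{j}}:=\bigcap_{i=1}^\ell \CP(b_{i,j_i}),\qquad \underline{j}=(j_1,\ldots,j_\ell)\in\prod_{i=1}^\ell\{0,1,\ldots,m_i\},$$
then partition $\P$ into finitely many pieces. For each cell, define the completely additive function $a_{\underline{j}}$ by $a_{\underline{j}}(p)=\1_{p\in\CP_{\underline{j}}}$. These functions are automatically nice, since they are $\{0,1\}$-valued on primes with pairwise disjoint supports $\CP(a_{\underline{j}})=\CP_{\underline{j}}$. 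Finite generation is inherited from the $b_{i,j}$. The key identity is that for each $i$,
$$b_{i,j}=\sum_{\underline{j}\colon j_i=j} a_{\underline{j}},$$
since both sides are completely additive and agree on primes. Substituting this gives
$$S_{i,n}=\prod_{\underline{j}} T_{i,\underline{j}}^{a_{\underline{j}}(n)},\qquad T_{i,\underline{j}}:=R_{i,j_i},$$
with the convention $R_{i,0}:=I$. All the $T_{i,\underline{j}}$ commute because the system is commuting. Repetitions $T_{i,\underline{j}}=T_{i,\underline{j}'}$ and identities are allowed, exactly as the claim permits.

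The one remaining issue, and the only genuine obstacle, is that some cells $\CP_{\underline{j}}$ may be thin even though each $\CP(b_{i,j})$ is large, since an intersection of large sets need not be large. To handle this, let $\CP'$ be the union of the thin cells; as a finite union of thin sets, $\CP'$ is thin.

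The thin cells are removed by the same device as in (b), now applied uniformly to the full average. By \cref{lemma_averages over naturals to averages over P free numbers}, applied to the averages \eqref{main average general 2} viewed as a function of $(a_{\underline{j}}(n))_{\underline{j}}$, convergence over $[N]$ is equivalent to convergence over $\CQ_{\CP'}\cap[N]$. On $\CQ_{\CP'}$, every $a_{\underline{j}}$ with thin cell vanishes identically, so those factors drop out. Applying the lemma in reverse then returns us to an average over all of $[N]$ in which only the large-cell functions $a_{\underline{j}}$ appear. One point needs checking here: the lemma requires all functions present to vanish on $\CP'$. This fails for the thin-cell functions themselves, so I would first rewrite the average as a function of the large-cell $a_{\underline{j}}$ only, after restricting to $\CQ_{\CP'}$, where the thin-cell factors are trivial, and then apply direction (c)$\Rightarrow$(a).

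Relabeling the large cells as $a_1,\ldots,a_{m_0}$ then gives the claim. The statement is really an identity in $n$ only after this reduction, so the claim should be read as holding for the reduced average.
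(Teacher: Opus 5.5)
Your construction is the paper's: its sets $\big(\CP(b_{i_1,j_1})\cap\cdots\cap\CP(b_{i_s,j_s})\big)\setminus\bigcup_{i\notin\{i_1,\ldots,i_s\}}\CP(b_i)$ are exactly your cells $\CP_{\underline{j}}$ with $\underline{j}\neq\underline{0}$. The cell $\underline{j}=\underline{0}$ carries only $I$ and can simply be dropped. The rewriting $b_{i,j}=\sum_{j_i=j}a_{\underline{j}}$, $T_{i,\underline{j}}=R_{i,j_i}$ is also the same. You are right that the displayed identity can only hold after thin cells are discarded. The paper does that in one line by invoking reduction (b).

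The gap is in that discarding step. \cref{lemma_averages over naturals to averages over P free numbers} assumes that every additive function in the summand vanishes on $\CP'$. So it says nothing about the original average, whose summand involves the thin-cell functions. Your move ``first restrict to $\CQ_{\CP'}$'' presupposes exactly the transfer that is missing.

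The lemma's conclusion genuinely fails here. Take $S_n=T^{a(n)}$ with $\CP(a)=\CP'$ thin. The average over $\CQ_{\CP'}\cap[N]$ is identically $F$. But $\E_{n\in[N]}T^{a(n)}F$ tends to $d(\CQ_{\CP'})\sum_{r}r^{-1}T^{a(r)}F$, with $r$ running over integers all of whose prime factors lie in $\CP'$, and this is in general not $F$. The paper's justification of (b) has the same defect, since the lemma would need $b_{i,j}$ to vanish on the thin set $\CP(b_{i,j})$.

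The missing idea is to factor $n=qr$ with $q\in\CQ_{\CP'}$ and $r$ as above. Let $\wt{S}_i$ be the action built from the large cells only; it agrees with $S_i$ on $\CQ_{\CP'}$. Then
\begin{equation*}
\E_{n\in[N]}\prod_{i}S_{i,n}F_i=\sum_{r}\frac{1}{N}\sum_{\substack{q\in\CQ_{\CP'}\\ q\leq N/r}}\prod_{i}\wt{S}_{i,q}\big(S_{i,r}F_i\big).
\end{equation*}
For fixed $r$ the lemma now applies, since only large-cell functions occur. Hence the $r$-th term converges as soon as the reduced average converges for the functions $S_{i,r}F_i$. Each term has norm at most $1/r$, and $\sum_r 1/r=\prod_{p\in\CP'}(1-1/p)^{-1}<\infty$ because $\CP'$ is thin. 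A tail estimate then gives convergence of the full average.

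This argument needs convergence of the reduced averages for all bounded functions, not only for the $F_i$. The resulting limit is also not the one the lemma would predict. Both points are harmless, because the reduced theorem is stated for arbitrary functions.
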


In fact, it is immediate from \cref{claim} that there is $m\in\N_0$, such that for all $n\in\N$, 
\begin{equation}\label{actions final form eq}
    S_{i,n} = U_{i,n}U_n = \prod_{j=1}^mT_{i,j}^{a_j(n)}U_n,
\end{equation}
where for each $i$, $U_{i,n}:=\prod_{j=1}^m T_{i,j}^{a_j(n)}$, and there are distinct $j,j'\in[m]$ with $T_{i,j}\neq T_{i,j'}$. Namely, $U$ is the common part appearing in all actions.

Note that for $m=0$, \cref{main theorem} reduces to \cref{thm_PMET}, so we may assume throughout that $m\in\N$.

\begin{Definition}\label{defn_identifying_actions}
    Let $(X,\mu,S_1,\ldots,S_n)$ be a finitely generated multiplicative system such that each $S_i$ satisfies \eqref{actions final form eq}. We identify each action $S_i$ with $(T_{i,1},\ldots,T_{i,m};U)$, we write $S_i=(T_{i,1},\ldots,T_{i,m};U)$, and we say that {\em the transformation $T_{i,j}$ is in the $a_j$-position of the action $S_i$}.
\end{Definition}

\begin{proof}[Proof of \cref{claim}]
Consider all the distinct non-empty sets of the form
$$\left(\CP(b_{i_1,j_1})\cap\ldots\cap\CP(b_{i_s,j_s})\right)\setminus\bigcup_{i\in[\ell]\setminus\{i_1,\ldots,i_s\}}\CP(b_i),$$
where $s\in[\ell]$, $i_1,\ldots,i_s\in[\ell]$ are pairwise distinct, and $j_1\in[m_{i_1}],\ldots,j_s\in[m_{i_s}]$.
Let $m_0\in\N$ be the number of all these sets and consider an enumeration $\{\CP_1,\ldots,\CP_{m_0}\}$. Since $\CP(b_{i,j})\cap\CP(b_{i,j'})=\emptyset$ for all $i\in[\ell]$ and all distinct $j,j'\in[m_i]$, then any two distinct $\CP_j,\CP_{j'}$ are disjoint. For each $j\in[m_0]$, we define the finitely generated completely additive function
$a_j\colon\N\to\N$ by $a_j(p)=\1_{\CP_j}(p)$. We note that the functions $a_1,\ldots,a_{m_0}$ are nice, since for each $j\neq j'\in[m_0]$ we have $\CP(a_j)\cap\CP(a_{j'})=\CP_j\cap\CP_{j'}=\emptyset$. Recalling also that $b_{i,j}(p)=\1_{\CP(b_{i,j})}(p)$ for all $i\in[\ell]$ and all $j\in[m_i]$, it follows that for each $i,j$, there is $(\e_{i,j}(k))_{k=1}^{m_0}\in\{0,1\}^{m_0}$, defined by $\e_{i,j}(k)=\1_{\CP_k\subseteq\CP(b_{i,j})}$, such that
$$b_{i,j} = \sum_{k=1}^{m_0} \e_{i,j}(k)a_k.$$
In addition, by the definition of the sets $\CP_k$, for each $i\in[\ell]$, there is a finite collection of these sets that partition $\CP(b_{i,j})$ (and these are exactly the sets $\CP_k$ with $\e_{i,j}(k)=1$), so it follows that
$$R_{i,j}^{b_{i,j}(n)} = \prod_{k=1}^{m_0}\left(R_{i,j}^{\e_{i,j}(k)}\right)^{a_k(n)}.$$
Thus, 
$$\prod_{j=1}^{m_i}R_{i,j}^{b_{i,j}(n)} = \prod_{j=1}^{m_i}\prod_{k=1}^{m_0}\left(R_{i,j}^{\e_{i,j}(k)}\right)^{a_k(n)}
=\prod_{k=1}^{m_0}\Bigg(\prod_{j=1}^{m_i}R_{i,j}^{\e_{i,j}(k)}\Bigg)^{a_k(n)}.$$
For each $i\in[\ell]$ and $k\in[m_0]$, we define 
$T_{i,j} = \prod_{j=1}^{m_i}R_{i,j}^{\e_{i,j}(k)}$, and since, in view of (b) above, we can assume that all the sets $\CP(a_i)$ are large, then the claim follows.
\end{proof}

\begin{Definition}[Collection of actions]\label{collection of actions}
    Let $\ell\in\N$, and $(X,\mu,S_1,\ldots,S_\ell)$ be a system. Let also $m\in\N$, and $T_{i,j}\colon X\to X$ be a measure-preserving transformation for each $i\in[\ell]$ and each $j\in[m]$, such that $S_i=(T_{i,1},\ldots,T_{i,m};U)$ for all $i\in[\ell]$.
    We call
    \begin{equation}\label{set_of_actions}
        \CS=\{S_i\colon i\in[\ell]\}
    \end{equation}
    a {\em collection of actions (on the probability space $\xm$) of   length $\ell(\CS)=\ell$}.
\end{Definition}

In view of \cref{claim}, the proof of \cref{main theorem} follows immediately from the next result.

\begin{Theorem}\label{main_theorem_simple}
    Let $\ell\in\N$, $(X,\mu)$ be a probability space, $m\in\N$, and $T_{i,j}\colon X\to X$, $i\in[\ell],j\in[m]$ be commuting invertible measure-preserving transformations. Let also $a_1,\ldots, a_m$ be nice additive functions and let $U_n$ be an invertible finitely generated multiplicative action on $(X,\mu)$ that commutes with each $T_{i,j}$.
    Then, for any functions $F_1,\ldots,F_\ell\in L^\infty(\mu)$, the limit
    \begin{equation*}
        \lim_{N\to\infty}\E_{n\in[N]}\prod_{i=1}^\ell\prod_{j=1}^mT_{i,j}^{a_j(n)}U_n F_i
    \end{equation*}
    exists in $L^2(\mu)$.
\end{Theorem}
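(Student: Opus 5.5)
We argue by induction on a complexity vector attached to the average, following the scheme outlined in the introduction. To each tuple of data $\CS=\{S_i=(T_{i,1},\ldots,T_{i,m};U)\colon i\in[\ell]\}$ we associate the vector $(c_1,c_2,c_3)$, ordered lexicographically. Here $c_1=m$ is the number of additive functions. To define $c_2$ and $c_3$, first discard positions $j$ in which all $T_{i,j}=I$; such positions can be absorbed into $U$ or removed. Then $c_2$ is the minimum over positions $j$ of the number of distinct non-identity transformations among $T_{1,j},\ldots,T_{\ell,j}$, and $c_3$ is the least multiplicity of such a transformation in a position attaining $c_2$. Positions where every action has the same transformation can be moved into the common part $U_n$, which decreases $c_1$. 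The base case is the one where every position is of this type, so $m=0$. Then the average is $\E_{n\in[N]} U_n(F_1\cdots F_\ell)$, which converges by \cref{thm_PMET}.

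For the inductive step we fix a position $j_0$ attaining $c_2$, and a transformation $R:=T_{i_0,j_0}$ attaining $c_3$ there. We apply \cref{Katai for actions} at $j_0$; its hypothesis concerns a collection of length $|C_{j_0}|$, and we would secure it by an outer induction on the length $\ell$. Passing to a magic extension is harmless, since convergence in an extension implies convergence in $X$. Using \cref{Katai for actions}, letting $K\to\infty$, and iterating as in the usual van der Corput-to-box-seminorm argument (via Cauchy--Schwarz and a change of variables reducing the differenced functions $T_{i,j_0}^{k_1}F_i\cdot T_{i,j_0}^{k_2}\overline{F_i}$), we aim to bound the $\limsup$ of the norm of the average by a box seminorm $\nnorm{F_{i_0}}_{R, RQ_1^{-1},\ldots,RQ_s^{-1}}$. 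Here $Q_1,\ldots,Q_s$ are the other distinct transformations in position $j_0$. This iteration, where at each step one distinct transformation in position $j_0$ is peeled off, is the step that requires care. We expect it to need a repeated application of \cref{cor_Katai variant} with the action $T_{i,j_0}$ replaced by its differences, using that the differenced averages have smaller length and hence converge by induction.

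Next we invoke \cref{prop_Nikos_and_Borys} to pass to an extension that is magic for these directions. If $\mathcal E(F_{i_0}\mid\CZ)=0$ the average tends to $0$. Otherwise, by linearity and approximation, we may take $F_{i_0}=H_0H_1\cdots H_s$ with $H_0$ being $R$-invariant and $H_r$ being $RQ_r^{-1}$-invariant. In the average, the term $S_{i_0,n}F_{i_0}$ splits into $\prod_r S'_{r,n}H_r$. Here $S'_r$ has the $j_0$-position of $S_{i_0}$ replaced by $Q_r$, or by $I$ for $r=0$, and all other positions unchanged. The new collection has the same $c_1$. In position $j_0$ the transformation $R$ has disappeared. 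If $R$ occurred only at $i_0$ this lowers $c_2$, or produces an identity entry; otherwise $c_2$ is unchanged and $c_3$ decreases. Either way the complexity strictly decreases. We must also check that the commuting and finite-generation hypotheses are preserved, which is clear since the new transformations lie in the group generated by the old ones.

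The main obstacle is the seminorm-control step. Each application of \cref{Katai for actions} differences only along $j_0$ and requires convergence of the averages of length $|C_{j_0}|$. So the induction has to be set up jointly in $\ell$ and in the complexity vector, so that every auxiliary average invoked has strictly smaller index than the current one. A further point is making sure that, when identity entries at $j_0$ appear, the definition of $c_2$ ignores them consistently, so that the ordering is well-founded.
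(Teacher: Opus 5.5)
Your outline matches the paper when $c_3=1$. In that case, after the $c(a_{j_0})$ applications of \cref{Katai for actions} in position $j_0$ only $S_{i_0}$ survives, you get $\nnorm{F_{i_0}}_{R,RQ_1^{-1},\ldots,RQ_s^{-1}}$, and the magic-extension step lowers $c_2$ or $c_1$, exactly as in \cref{arrows prop} (b), (c).

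The gap is the case $c_3>1$, where the seminorm bound you aim for is false. Differencing in position $j_0$ only removes actions whose $j_0$-entry differs from $R$. The $c_3$ actions with entry $R$ all survive, and since they share that entry, every further $j_0$-derivative shifts them identically, so they can never be separated in this position. Concretely, take
\[
R_1^{a_1(n)}R_2^{a_2(n)}F\cdot R_1^{a_1(n)}T_2^{a_2(n)}G\cdot T_2^{a_2(n)}H\cdot R_2^{a_2(n)}W ,
\]
where $c_2=1$ is attained at $a_1$ and $c_3=2$. Position $a_1$ contains only $R_1$ and $I$, so your scheme can only produce a bound $\nnorm{F}_{R_1,\ldots,R_1}$. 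Now let $X=Y\times\mathbb{S}^1$, $R_1=\sigma\times\mathrm{id}$ with $\sigma$ weakly mixing, and $R_2\neq T_2$ rotations of the second coordinate. Take $F=f\otimes 1$ with $|f|=1$ and $\int f=0$, $G=\overline{F}$, and $H=W=1$. All these seminorms vanish, yet the average is identically $1$.

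The correct control, which is the paper's procedure, keeps differencing through further positions $a_2,\ldots,a_D$ along the successive localizations until a single action is left. This produces directions $T_{i_0,d}T_{i_t^{(d)},d}^{-1}$ with $d\geq 2$. But then decomposing $F_{i_0}$ creates the actions $T_{i_t^{(d)},d}^{a_d(n)}\prod_{j\neq d}T_{i_0,j}^{a_j(n)}U_n$ for $d\geq 2$, and these still carry $R$ in position $a_1$. So the multiplicity of $R$ need not drop. For example, write $R^{a_1}A^{a_2}B^{a_3}$ for $R^{a_1(n)}A^{a_2(n)}B^{a_3(n)}$, with $R,A,A',B,B',C,E$ distinct and non-identity. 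The collection $\{R^{a_1}A^{a_2}B^{a_3},\,R^{a_1}A'^{a_2}B'^{a_3},\,C^{a_2}E^{a_3}\}$ has complexity $(3,1,2)$ and is controlled by $\nnorm{F_1}_{R,AA'^{-1}}$. It is reduced to $\{R^{a_1}A'^{a_2}B'^{a_3},\,R^{a_1}A'^{a_2}B^{a_3},\,A^{a_2}B^{a_3},\,C^{a_2}E^{a_3}\}$, which again has complexity $(3,1,2)$.

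So your claim that ``otherwise $c_2$ is unchanged and $c_3$ decreases'' is unjustified, and no one-step lexicographic descent is available. The missing idea is the paper's proof of \cref{arrows prop} (a). It uses $c_3(\CS)=\ell(\Loc(\CS))$, the inclusion $\Loc(\CR_k(\CS))\subseteq\CR_k(\Loc(\CS))$ from \cref{last technical lemma}, and the fact that $\Loc(\CS)$ effectively has $c_1-1$ positions. An induction on $c_1$ then shows that finitely many iterations bring $c_3$ down to $1$.

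Separately, you do not need an outer induction on $\ell$ to secure the hypothesis of \cref{Katai for actions}. That induction would clash with reductions that increase the length. After composing with $S_{i_0,n}^{-1}$, the auxiliary averages already have strictly smaller complexity.
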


The goal of the remainder of this section is to prove \cref{main_theorem_simple}. Before doing so, we introduce some useful notions.

\subsection{Example}

Before we delve into the details of the proof, we illustrate our methods with a motivating example. 

Suppose we have a probability space $(X,\mu)$, commuting measure-preserving transformations $T_1,T_2,R_1,R_2$ and completely additive functions $a_1,a_2:\N\to \N_0$ that take values $0,1$ in the primes. Furthermore, we assume that $\CP(a_1)\cap \CP(a_2)=\emptyset$ and that both $\CP(a_1)$ and $\CP(a_2)$
are large.
We will study the convergence of the ergodic averages
\begin{equation}\label{eq_example}
    \E_{n\in [N]}T_1^{a_1(n)}T_2^{a_2(n)}F\cdot R_1^{a_1(n)}R_2^{a_2(n)}G \cdot R_1^{a_1(n)}T_2^{a_2(n)}H\cdot  R_2^{a_2(n)}W
\end{equation}for any functions $F,G,H,W\in L^{\infty}(\mu).$
Without loss of generality, we will always assume that all measurable functions involved have unit modulus. 

Throughout the proof, we will apply \cref{cor_Katai variant} several times. We will not justify the hypothesis in the aforementioned corollary that certain limits must exist. This will be guaranteed in the proof of the general case through an inductive argument.

$\bullet$ {\bf First reduction:}
We apply \cref{cor_Katai variant} for the set $\CP=\CP(a_1)$ to get the bound 
\begin{align*}
    \limsup_{N\to\infty}&\norm{\E_{n\in [N]}T_1^{a_1(n)}T_2^{a_2(n)}F\cdot R_1^{a_1(n)}R_2^{a_2(n)}G \cdot R_1^{a_1(n)}T_2^{a_2(n)}H\cdot R_2^{a_2(n)}W}_2^2 \\
    & \leq \limsup_{M\to\infty}\lim_{N\to\infty}\logE_{\substack{m_1\in\CP_M^{k_1} \\ m_2\in\CP_M^{k_2}}} \E_{n\in[N]}\int_X \overline{T_1^{a_1(m_1n)}T_2^{a_2(m_1n)}F\cdot R_1^{a_1(m_1n)}R_2^{a_2(m_1n)}G} \\
    & \hspace{.5cm} \cdot \overline{R_1^{a_1(m_1n)}T_2^{a_2(m_1n)}H\cdot  R_2^{a_2(m_1n)}W}\cdot
    T_1^{a_1(m_2n)}T_2^{a_2(m_2n)}F\cdot R_1^{a_1(m_2n)}R_2^{a_2(m_2n)}G \\
    & \hspace{.5cm} \cdot R_1^{a_1(m_2n)}T_2^{a_2(m_2n)}H\cdot  R_2^{a_2(m_2n)}W \,d\mu
\end{align*}
for any $k_1,k_2\in \N$ (here, we assume that the relevant limits along $N$ exist). 

Now, we observe that $a_1(m_1n)=a_1(m_1)+a_1(n)=k_1+a_1(n)$, since $m_1$ is a product of $k_1$ primes from $\CP(a_1)$. Similarly, $a_1(m_2n)=k_2+a_1(n)$.
On the contrary, we have $a_2(m_1n)=a_2(m_2n)=a_2(n)$, since $a_2(p)=0$ for every prime $p\in \CP(a_1)$ by our assumption. It follows that the previous bound simplifies to 
\begin{align*} 
    & \limsup_{N\to\infty}\norm{\E_{n\in [N]}T_1^{a_1(n)}T_2^{a_2(n)}F\cdot R_1^{a_1(n)}R_2^{a_2(n)}G \cdot R_1^{a_1(n)}T_2^{a_2(n)}H\cdot R_2^{a_2(n)}W }_2^2 \\
    & \leq \lim_{N\to\infty} \E_{n\in[N]}\int_X \overline{T_1^{a_1(n)+k_1}T_2^{a_2(n)}F\cdot R_1^{a_1(n)+k_1}R_2^{a_2(n)}G} \cdot\overline{R_1^{a_1(n)+k_1}T_2^{a_2(n)}H\cdot  R_2^{a_2(n)}W} \\
    & \hspace{2.9cm} \cdot
    T_1^{a_1(n)+k_2}T_2^{a_2(n)}F\cdot R_1^{a_1(n)+k_2}R_2^{a_2(n)}G \cdot R_1^{a_1(n)+k_2}T_2^{a_2(n)}H\cdot  R_2^{a_2(n)}W \,d\mu \\
    & = \lim_{N\to\infty} \E_{n\in[N]}\int_X T_1^{a_1(n)}T_2^{a_2(n)}\big(T_1^{k_1}\overline{F}\cdot T_1^{k_2}F\big) \cdot R_1^{a_1(n)}R_2^{a_2(n)}\big(R_1^{k_1}\overline{G}\cdot R_1^{k_2}G\big) \\
    & \hspace{2.9cm} \cdot R_1^{a_1(n)}T_2^{a_2(n)}\big(R_1^{k_1}\overline{H}\cdot R_1^{k_2}H\big)\cdot R_2^{a_2(n)}|W|^2\,d\mu.
\end{align*} 
Composing with $R_1^{-a_1(n)}T_2^{-a_2(n)}$ and using the fact that $|W|=1$ almost everywhere, we conclude that 
\begin{align*}
     \limsup_{N\to\infty}&\norm{\E_{n\in [N]}T_1^{a_1(n)}T_2^{a_2(n)}F\cdot R_1^{a_1(n)}R_2^{a_2(n)}G \cdot R_1^{a_1(n)}T_2^{a_2(n)}H\cdot R_2^{a_2(n)}W}_2^2 \\
     & \leq \lim_{N\to\infty} \E_{n\in[N]}\int_X (T_1R_1^{-1})^{a_1(n)}\big(T_1^{k_1}\overline{F}\cdot T_1^{k_2}F\big)\cdot (R_2T_2^{-1})^{a_2(n)}\big(R_1^{k_1}\overline{G}\cdot R_1^{k_2}G\big) \\
     & \hspace{2.9cm} \cdot \big(R_1^{k_1}\overline{H}\cdot R_1^{k_2}H\big) \,d\mu.
\end{align*}
\begin{Remark}
    Here, we record an observation. In order to be able to apply \cref{cor_Katai variant}, it suffices to show that the limit along $N$ in the last expression exists for any $k_1,k_2\in \N$. This follows from the fact that the limit in $L^2(\mu)$ of the averages 
    \begin{equation*}
        \E_{n\in [N]} (T_1R_1^{-1})^{a_1(n)}F\cdot (R_2T_2^{-1})^{a_2(n)}G
    \end{equation*}
    exists. This average has lower complexity (in an appropriate sense that we will introduce later), so we assume its convergence by induction on this complexity parameter. 
\end{Remark}

Averaging over $k_1,k_2$ and applying the Cauchy-Schwarz inequality, we deduce that 
\begin{align*}
    & \limsup_{N\to\infty}\norm{\E_{n\in [N]}T_1^{a_1(n)}T_2^{a_2(n)}F\cdot R_1^{a_1(n)}R_2^{a_2(n)}G \cdot R_1^{a_1(n)}T_2^{a_2(n)}H\cdot  R_2^{a_2(n)}W }_2^2 \\
    & \leq \E_{k_1,k_2\in [K]}\lim_{N\to\infty}\norm{\E_{n\in [N]}(T_1R_1^{-1})^{a_1(n)}\big(T_1^{k_1}\overline{F}\cdot T_1^{k_2}F\big)\cdot (R_2T_2^{-1})^{a_2(n)}\big(R_1^{k_1}\overline{G}\cdot R_1^{k_2}G\big)}_2.
\end{align*}
We apply \cref{cor_Katai variant} for $\CP=\CP(a_1)$ once more to bound the last norm.
We infer that for any $\ell_1,\ell_2\in \N$, 
\begin{align*}
    & \lim_{N\to\infty}\norm{\E_{n\in [N]}(T_1R_1^{-1})^{a_1(n)}\big(T_1^{k_1}\overline{F}\cdot T_1^{k_2}F\big)\cdot (R_2T_2^{-1})^{a_2(n)}\big(R_1^{k_1}\overline{G}\cdot R_1^{k_2}G\big)}_2^2 \\
    & \hspace{3cm} \leq \limsup_{M\to\infty} \lim_{N\to\infty}\logE_{\substack{m_1\in\CP_M^{\ell_1} \\ m_2\in\CP_M^{\ell_2}}} \E_{n\in[N]}\int_X \overline{(T_1R_1^{-1})^{a_1(m_1n)}\big(T_1^{k_1}\overline{F}\cdot T_1^{k_2}F\big)} \\ 
    & \hspace{3.5cm} \cdot \overline{(R_2T_2^{-1})^{a_2(m_1n)}\big(R_1^{k_1}\overline{G} \cdot R_1^{k_2}G\big)} \cdot 
    (T_1R_1^{-1})^{a_1(m_2n)}\big(T_1^{k_1}\overline{F}\cdot T_1^{k_2}F\big) \\
    & \hspace{8.1cm} \cdot (R_2T_2^{-1})^{a_2(m_2n)}\big(R_1^{k_1}\overline{G}\cdot R_1^{k_2}G\big)\,d\mu.
\end{align*}

Similarly as above, we compute $a_1(m_1n)=\ell_1+a_1(n)$, $a_1(m_2n)=\ell_2+a_1(n)$ and $a_1(m_1n)=a_2(m_2n)=a_2(n)$.
Plugging these relations into the previous expression, we deduce that 
\begin{align*}
    & \lim_{N\to\infty}\norm{\E_{n\in [N]}(T_1R_1^{-1})^{a_1(n)}\big(T_1^{k_1}\overline{F}\cdot T_1^{k_2}F\big)\cdot (R_2T_2^{-1})^{a_2(n)}\big(R_1^{k_1}\overline{G}\cdot R_1^{k_2}G\big)}_2^2 \\
    & \hspace{3cm} \leq \lim_{N\to\infty}\logE_{\substack{m_1\in\CP_M^{\ell_1} \\ m_2\in\CP_M^{\ell_2}}} \E_{n\in[N]}\int_X \overline{(T_1R_1^{-1})^{a_1(n)+\ell_1}\big(T_1^{k_1}\overline{F}\cdot T_1^{k_2}F\big)} \\
    & \hspace{3.5cm} \cdot (T_1R_1^{-1})^{a_1(n)+\ell_2}\big(T_1^{k_1}\overline{F}\cdot T_1^{k_2}F\big)\cdot(R_2T_2^{-1})^{a_2(n)}\big|R_1^{k_1}\overline{G}\cdot R_1^{k_2}G\big|^2\,d\mu.
\end{align*} 
Using that $|G|=1$ almost everywhere, composing with $(T_1R_1^{-1})^{-a_1(n)}$ and then averaging over $\ell_1,\ell_2$, we get the final bound
\begin{align*}
    &\limsup_{N\to\infty}\norm{\E_{n\in[N]}T_1^{a_1(n)}T_2^{a_2(n)}F\cdot R_1^{a_1(n)}R_2^{a_2(n)}G\cdot R_1^{a_1(n)}T_2^{a_2(n)}H\cdot R_2^{a_2(n)}W}_2^4 \\
    & \hspace{3.8cm} \leq \E_{\ell_1,\ell_2,k_1,k_2\in[K]} \int_X T_1^{k_1}(T_1R_1^{-1})^{\ell_1}F\cdot T_1^{k_1}(T_1R_1^{-1})^{\ell_2}\overline{F}\\
    & \hspace{7cm} \cdot T_1^{k_2}(T_1R_1^{-1})^{\ell_1}\overline{F}\cdot T_1^{k_2}(T_1R_1^{-1})^{\ell_2}F\,d\mu .
\end{align*}
Taking the limit $K\to\infty$, we have the seminorm control \begin{multline*}
      \limsup_{N\to\infty}\norm{\E_{n\in [N]}T_1^{a_1(n)}T_2^{a_2(n)}F\cdot R_1^{a_1(n)}R_2^{a_2(n)}G \cdot R_1^{a_1(n)}T_2^{a_2(n)}H\cdot  R_2^{a_2(n)}W }_2 \\
      \leq \nnorm{F}_{T_1,T_1R_1^{-1}}.
\end{multline*}

This seminorm bound holds in any system $(X,\mu)$ equipped with commuting measure-preserving transformations $T_1,T_2,R_1,R_2$. In order to extract useful information from this seminorm, it suffices to consider the case where the system is magic with respect to $T_1,T_1R_1^{-1}$. Namely, we may pass to an extension of the original system that is magic with respect to these two transformations and it suffices to prove that averages of the form \eqref{eq_example} converge in this extension.

If the function $F$ satisfies $\nnorm{F}_{T_1,T_1R_1^{-1}}=0$, then the corresponding average converges to zero and we are done. Assume, therefore, that $F$ is measurable with respect to $\mathcal{Z}_{T_1,T_1R_1^{-1}}$. Since the linear span of functions of the form $F_1\cdot F_2$ with $F_1\in \mathcal{I}(T_1), F_2\in \mathcal{I}(T_1R_1^{-1})$ is dense in $\mathcal{Z}_{T_1,T_1R_1^{-1}}$ (due to the magic system assumption), it suffices to prove our theorem in the case that $F$ has this form. In this case, our average can be rewritten as
\begin{multline*}
    \E_{n\in [N]}T_1^{a_1(n)}T_2^{a_2(n)}(F_1\cdot F_2)\cdot R_1^{a_1(n)}R_2^{a_2(n)}G \cdot R_1^{a_1(n)}T_2^{a_2(n)}H\cdot  R_2^{a_2(n)}W \\
    = \E_{n\in [N]}T_2^{a_2(n)}F_1 \cdot R_1^{a_1(n)}R_2^{a_2(n)}G \cdot R_1^{a_1(n)}T_2^{a_2(n)}(H\cdot F_2)\cdot  R_2^{a_2(n)}W.
\end{multline*}
Thus, our result will follow if we show that for any system $(X,\mu)$, measure-preserving transformation $T_1,T_2,R_1,R_2$ and functions $F,G,H,W\in L^{\infty}(\mu)$, the averages \begin{equation}\label{eq_example first reduction}
    \E_{n\in [N]}R_1^{a_1(n)}R_2^{a_2(n)}F \cdot R_1^{a_1(n)}T_2^{a_2(n)}G \cdot T_2^{a_2(n)}H\cdot R_2^{a_2(n)}W
\end{equation}
converge in $L^2(\mu)$.

$\bullet$ {\bf Second reduction:}
We establish the convergence of the last average by adopting the same strategy as above to reduce our problem to the convergence of an average of lower complexity. Again, we can assume that the functions $F,G,H,W$ have unit modulus.

We apply \cref{cor_Katai variant} for $\CP(a_1)$ to bound the $L^2$-norm of this average by 
\begin{equation*}
   \lim_{N\to\infty} \norm{\E_{n\in [N]}(R_2T_2^{-1})^{a_1(n)}\big(R_1^{k_1}\overline{F}\cdot R_1^{k_2}F\big)}_2
\end{equation*}
for any $k_1,k_2\in \N$.
Once again, we apply \cref{cor_Katai variant} to bound the fourth power of $L^2$-norm of the initial average by \begin{equation*}
    \E_{k_1,k_2,\ell_1,\ell_2}\int_X R_1^{k_1}(R_2T_2^{-1})^{\ell_1}F \cdot  R_1^{k_1}(R_2T_2^{-1})^{\ell_2}\overline{F} \cdot  R_1^{k_2}(R_2T_2^{-1})^{\ell_1}\overline{F}\cdot  R_1^{k_2}(R_2T_2^{-1})^{\ell_2}F d\mu.
\end{equation*} 
Taking the limit as $K\to \infty$, we infer \begin{equation*}
    \limsup_{N
    \to\infty} \norm{\E_{n\in [N]}R_1^{a_1(n)}R_2^{a_2(n)}F \cdot R_1^{a_1(n)}T_2^{a_2(n)}G \cdot T_2^{a_2(n)}H\cdot R_2^{a_2(n)}W}_2\leq \nnorm{F}_{R_1,R_2T_2^{-1}}.
\end{equation*}
Once more, we can pass to a magic extension (with respect to $R_1,R_2T_2^{-1}$) of the system and it suffices to prove the convergence of the averages \eqref{eq_example first reduction}. Therefore, we may assume that our system is magic with respect to $R_1,R_2T_2^{-1}$. The seminorm bound implies that it suffices to verify the asserted convergence when $F\in \mathcal{Z}_{R_1,R_2T_2^{-1}}$ and, arguing as above, we can further assume that $F$ takes the form $F=F_1\cdot F_2$, where $F_1\in \mathcal{I}(R_1), F_2\in \mathcal{I}(R_2T_2^{-1})$. Thus, our average can be rewritten as 
\begin{multline*}
    \E_{n\in [N]}R_1^{a_1(n)}R_2^{a_2(n)}(F_1\cdot F_2) \cdot R_1^{a_1(n)}T_2^{a_2(n)}G \cdot T_2^{a_2(n)}H\cdot R_2^{a_2(n)}W \\
    = \E_{n\in [N]} R_1^{a_1(n)}T_2^{a_2(n)}(G\cdot F_2)\cdot T_2^{a_2(n)}H\cdot R_2^{a_2(n)}(W\cdot F_1).
\end{multline*}
Consequently, our result will follow if we show that for any system $(X,\mu)$, measure-preserving transformations $T_1,T_2,R_1,R_2$ and functions $F,G,H\in L^{\infty}(\mu)$, the averages \begin{equation}\label{eq_example second reduction}
     \E_{n\in [N]} R_1^{a_1(n)}T_2^{a_2(n)}F \cdot T_2^{a_2(n)}G\cdot R_2^{a_2(n)}H
\end{equation}
converge in $L^2(\mu)$.

$\bullet$ {\bf Third reduction:}
Again, assume that the functions $F,H,W$ have unit modulus and apply \cref{cor_Katai variant} for $\CP(a_1)$ to bound our average by \begin{equation*}
    \lim_{N\to\infty} \E_{n\in [N] } \int_X R_1^{k_1}\overline{F}\cdot R_1^{k_2}F\,d\mu
\end{equation*}for all $k_1,k_2\in \N$.
Averaging over $k_1,k_2\in \N$ and using the mean ergodic theorem, we arrive at the bound \begin{equation*}
      \limsup_{N\to\infty}\norm{\E_{n\in [N]} R_1^{a_1(n)}T_2^{a_2(n)}F \cdot T_2^{a_2(n)}G\cdot R_2^{a_2(n)}H}_2\leq \norm{\mathcal{E}_{\mu}(F| \mathcal{I}(R_1))}_2.
\end{equation*}
If $F$ is orthogonal to the invariant factor corresponding to $R_1$, the average converges to zero. Therefore, it suffices to consider the case where $F\in \mathcal{I}(R_1)$, in which case the average simplifies to \begin{equation*}
    \E_{n\in [N]}T_2^{a_2(n)}(G\cdot F)\cdot R_2^{a_2(n)}H. 
\end{equation*} Thus, our result follows if we show that for any system $(X,\mu)$, commuting measure-preserving transformations $T_2,R_2$ and functions $F,G\in L^{\infty}(\mu)$, the averages 
\begin{equation}\label{eq_example third reduction}
     \E_{n\in [N]}T_2^{a_2(n)}F \cdot R_2^{a_2(n)}G
\end{equation}
converge in $L^2(\mu)$. 

$\bullet$ {\bf Final step:}
In order to get the final seminorm bound, we apply \cref{cor_Katai variant} for $\CP(a_2)$ twice as we did in the previous steps. After similar computations, we arrive at \begin{equation*}
    \norm{\E_{n\in [N]} T_2^{a_2(n)}F \cdot R_2^{a_2(n)}G}_2\leq \nnorm{F}_{T_2,T_2R_2^{-1}}.
\end{equation*}
Passing to an appropriate extension, we may assume that our system is magic with respect to the transformations $T_2,T_2R_2^{-1}$. The last seminorm bound implies that it suffices to establish convergence when $F\in \mathcal{Z}_{T_2,T_2R_2^{-1}}$. Using the magic property and an approximation argument, we may reduce to the case where $F=F_1\cdot F_2$, where $F_1\in \mathcal{I}(T_2), F_2\in \mathcal{I}(T_2R_2^{-1})$. Plugging this in \eqref{eq_example third reduction}, we arrive at the average 
\begin{equation}\label{eq_example final step}
    F_1\cdot  \E_{n\in [N]} R_2^{a_2(n)}(F_2\cdot G).
\end{equation}
However, this average involves only a single multiplicative action, and, therefore, it converges in $L^2(\mu)$ by \cref{thm_PMET}. The conclusion follows.

\subsection{The complexity vector}

In the above example, the main idea behind proving convergence is to use our orthogonality criterion to bound the averages by a box seminorm. Then, by invoking the machinery of magic extensions, we reduce the problem to establishing convergence for simpler averages. Thus, we repeatedly replace the original average with progressively simpler ones.
This example illustrates the general strategy of the proof. We formalize it by introducing a suitable notion of complexity and a reduction procedure that decreases this complexity at each step, eventually leading to an average whose convergence is already known. 
The definition of complexity is given below, and the procedure through which the complexity is reduced is described in the following subsection.

Let $\CS=(S_1,\ldots,S_\ell)$ be a collection of actions in some probability space $\xm$.
Given any collection $\CF$ of functions $F_i\in L^\infty(\mu)$, $i\in[\ell]$, with $|F_i|=1$, we let
\begin{equation}\label{main_average}
    A_N(\CS;\CF) := \E_{n\in[N]}\prod_{i=1}^\ell S_{i,n}F_i
    = \E_{n\in[N]}\prod_{i=1}^\ell \prod_{j=1}^mT_{i,j}^{a_j(n)}U_n F_i.
\end{equation}
Our goal is to show that $A_N(\CS;\CF)$
converges in $L^2(\mu)$ for all such $\CF$. Whenever this is the case, we say that we have {\em convergence for} $\CS$.

\begin{Definition}[Complexity vector]\label{complexity vector defn}
    Let $\ell\in\N$, $m\in\N$, and $\CS=\CS_{\ell,m}=\{S_1,\ldots,S_\ell\}$ be a collection of actions on a probability space $\xm$ with $S_i=(T_{i,1},\ldots,T_{i,m};U)$ for each $i\in[\ell]$.
    We define the {\em complexity vector} $\textbf{c}=\textbf{c}(\CS)=(c_1(\CS),c_2(\CS),c_3(\CS))=(c_1,c_2,c_3)$ of $\CS$ as follows:
    For each $i\in[d]$ and $j\in[m]$, we define the following:
    \begin{itemize}
        \item[i)] If $T_{i,j}\neq I$, then let 
            $$c(T_{i,j};\CS)=c(T_{i,j}) := |\{i'\in[\ell]\colon T_{i',j}=T_{i,j}\}|.$$
        
        \item[ii)] Let 
            $$c(a_j;\CS)=c(a_j):=|\{T_{i,j}\neq I\colon T_{i,j}\ \text{is in the}\ a_j\text{-position of}\ S_i\ \text{for some}\ i\in[\ell]\}|.$$
    \end{itemize}
    Then, we let
    \begin{align*}
        c_1 & := m, \\
        c_2 & :=
        \min\{c(a_j)\colon j\in[m]\}, \\
        c_3 & := \min\big\{c(T_{i,j})\colon i\in[\ell], j=\min\{k\in[m]\colon c(a_k)=c_2\}\big\}.
    \end{align*}
 We call $c_1,c_2$ and $c_3$ the {\em first}, {\em second} and {\em third complexity parameter} respectively.
    
    Finally, we denote by $\mathbf{C}$ the set of all $\textbf{c}\in\N^3$ such that $\textbf{c}$ is a complexity vector for a collection $\CS$.
\end{Definition}

Note that by rearranging, if necessary, we may assume throughout that $c_2=c(a_1)$.
We equip $\mathbf{C}$ with the lexicographic order and denote its associated strict order by $<$.

Let us now explain how the complexity is reduced in the example from the previous subsection. The initial average \eqref{eq_example} has complexity vector $(2,2,1)$. Here, $c_1=2$ is the number of positions (generators) of each action, and $c_2=2$ is the minimum number of non-identity transformations appearing in some position. Since $c(a_1)=c(a_2)=2$, then $c_2=c(a_1)=2$, and so $c_3=1$ is the minimum multiplicity among the transformations in the $a_1$-position.
More concretely, in the $a_1$-position there are two non-identity transformations, namely $T_1$ and $R_1$, so this position realizes the value $c_2=2$. Moreover, $T_1$ appears only once, whereas $R_1$ appears twice, and hence $c_3=1$ is attained by $T_1$. The reduction strategy is therefore to replace $T_1$ by $I$ and $R_1$, thereby decreasing the complexity to $(2,1,c_3')$.
Indeed, after the first reduction we are left with the average \eqref{eq_example first reduction}, whose complexity is $(2,1,2)$. Here, $R_1$ is the only non-identity transformation appearing in the $a_1$-position, and it appears twice, so $c_3'=2$ is attained by $R_1$. The next step is therefore to replace one occurrence of $R_1$ by $I$, reducing its multiplicity to one and yielding an average of complexity $(1,2,1)$. This is precisely the complexity of \eqref{eq_example second reduction}.
Continuing in the same manner, after four reduction steps we arrive at the average \eqref{eq_example final step}, whose complexity is $(1,1,1)$. Its convergence now follows directly from \cref{thm_PMET}.

\subsection{The complexity reduction procedure}\label{complexity reduction procedure subsection}
In this subsection, we present the {\em complexity reduction procedure}, which is an algorithmic procedure through which proving convergence for $\CS$ is reduced to proving convergence for a lower complexity collection of actions. The idea of the proof is that we will apply this procedure repeatedly until we are reduced to a collection for which the convergence is known.

The complexity reduction procedure itself is built by an iterative construction.
To describe this procedure for a collection of actions $\CS=\CS_{\ell,m}$ with complexity vector $\textbf{c} = (c_1,c_2,c_3) = (m,c_2,c_3)$ we need to define several notions. In the following explanation, upper indices will be used to denote the iteration step, except for the parameters $j_d,r_d$, since this would lead to unwieldy notation. We initially fix the letter $D$ to count the number of steps in the following algorithm, and we explain when it terminates. 

\begin{itemize}
    \item $C_0^{(0)}:=\{i\in[\ell]\colon T_{i,1}\neq I\}$, and let $\Loc_0(\CS):=\CS$.
    
    \item Let $j_1:=1$, $r_1:=c(a_{j_1};\Loc_0(\CS))-1=c(a_1)-1=c_2-1$ and consider the set $$\{T_{i,1}\colon i\in C_0^{(0)}\} := \{T_{i_0^{(1)},1},T_{i_1^{(1)},1},\ldots,T_{i_{r_1}^{(1)},1}\},$$
    where $T_{i_0^{(1)},1}$ is the transformation that attains $c_3(\CS)=c_3(\Loc_0(\CS))=c_3$.
Namely, this set consists of all possible transformations appearing in position $a_1$ (not counted with multiplicity).
    Finally, we set $$C_t^{(1)}:=\{i\in C_0^{(0)}\colon T_{i,1}=T_{i_t^{(1)},1}\}$$ for each $0\leq t\leq r_1$, so that $C_t^{(1)}$ identifies the actions on which $T_{i_t^{(1)},1}$ appears in position $a_1$. Note that we have a partition $C_0^{(0)} = \bigcup_{t=0}^{r_1} C_t^{(1)}$. 
    In addition, we define the {\em $1$-localization of} $\CS$ by $\Loc_1(\CS):=\{S_i\colon i\in C_0^{(1)}\}$.
    
    \item Let $d>1$ and suppose that $j_{\wt{d}},r_{\wt{d}}$, $T_{i_t^{(\wt{d})},j_{\wt{d}}}$, $C_t^{(\wt{d})}$ (for every $0\leq t\leq r_{\wt{d}}$) and $\Loc_{\wt{d}}(\CS)$, are all defined for every $1\leq \wt{d}\leq d-1$. Furthermore, we assume that $c_3(\Loc_{d-2}(\CS))$ is attained at $T_{i_0^{(d-1)},j_{d-1}}\neq I$, and that we have a partition $C_0^{(d-2)} = \bigcup_{t=0}^{r_{d-1}}C_t^{(d-1)}$. Lastly, we have defined the {\em $(d-1)$-localization of} $\CS$ to be the set $\Loc_{d-1}(\CS):=\{S_i\colon i\in C_0^{(d-1)}\}$.
    
    If $|C_0^{(d-1)}|=1$, then we let $D:=d-1$ and thus the procedure terminates in the $(d-1)$-step .
    Suppose that this is not the case and pick $j_d$ to be the smallest number $j$ in the range $j_{d-1}<j\leq m$ such that\footnote{We are essentially moving  to the right one position at a time until we find a position $a_j$ that contains at least two distinct transformations. Here, we include the identity transformation in the count.}
    $$c(a_j;\Loc_{d-1}(\CS))+\1_{\{\exists\, i \in C_0^{(d-1)}\colon T_{i,j} = I\}}>1.$$ 
    Let 
    $r_d := c(a_j;\Loc_{d-1}(\CS))+\1_{\{\exists\, i \in C_0^{(d-1)}\colon T_{i,j} = I\}} - 1$ and
    $\{T_{i,j_d}\colon i\in C_0^{(d-1)}\} := \{T_{i_0^{(d)},j_d},T_{i_1^{(d)},j_d},\ldots,T_{i_{r_d}^{(d)},j_d}\}$,\footnote{In this case, i.e., $d>1$, this set might contain the identity transformation $I$, contrary to our definition in the case $d=1$.} where $T_{i_0^{(d)},j_d}$ is the transformation that attains $c_3(\Loc_{d-1}(\CS))$. Lastly, we set $C_t^{(d)}:=\{i\in C_0^{(d-1)}\colon T_{i,j_d}=T_{i_t^{(d)},j_d}\}$ for each $0\leq t\leq r_d$. 
    Note that $T_{i_0^{(d)},j_d}\neq I$, and we have a partition $C_0^{(d-1)} = \bigcup_{t=0}^{r_d}C_t^{(d)}$. In addition, we define the {\em $d$-localization of $\CS$} by $\Loc_{d}(\CS):=\{S_i\colon i\in C_0^{(d)}\}$. 
\end{itemize}
Eventually, this inductive definition of $j_d$, $r_d$, $T_{i_t^{(d)},j_d}$, $C_t^{(d)}$ and $\Loc_{d}(\CS)$  
will be terminated as there will be some $d\in[m]$ such that $|C_0^{(d)}|=1$, and then $D$ is defined as the smallest such $d$. To see that such a $d$ always exists, note that, by definition, $|C_0^{(d)}|\geq1$ and if $|C_0^{(d)}|>1$ for all $d<m$, then we must have $|C_0^{(m)}|=1$.

We summarize here some main points for the reader. At each step in this iteration, we focus our attention on a position $a_j$ in the collection, and the number $j_d$ indicates this index $j$ at step $d$. Then, $r_d$ counts how many transformations (without multiplicity) appear in this position. For each one of these transformations, the sets $C_{t}^{(d)}$ ($r_d$ sets in total) indicate which actions contain this specific transformation in the $j_d$-position. Of particular importance are the transformations $T_{i_0^{(d)},r_d}$ for which the minimum number of appearances $c(a_j)$ is attained. The reason for defining all these parameters, and considering the localizations will become clear once the procedure is described below.

\begin{Remark}\label{remark about complexity vectors}
    By rearranging, throughout we assume, without loss of generality, that $j_d=d$ and $c(a_d)=c(a_{j_d}) = c_2(\Loc_{d-1}(\CS))$ for all $d\in[D]$.
\end{Remark}

Suppose that any average with complexity vector $\textbf{c}'<\textbf{c}$ converges. Let $\CS=\CS_{\ell,m}=\{S_i\colon i\in[\ell]\}$ be a collection of actions and $\CF=\{F_i\in L^\infty(\mu)\colon |F_i|=1, i\in[\ell]\}$. Then the {\em complexity reduction procedure} for the pair $(\CS,\CF)$ 
consists of two steps: a seminorm control in specific directions, and then reducing our problem to the convergence of a new collection via magic extensions.

First, we obtain the seminorm control. This is done through several applications of our orthogonality criterion as follows:

\textbf{$a_1$-position:}
Recall $C_0^{(0)} = \{i\in[\ell]\colon T_{i,1}\neq I\}$, $r_1=c(a_2)-1=c_2-1$, $\{T_{i,1}\colon i\in C_0^{(0)}\} = \{T_{i_0^{(1)},1},T_{i_1^{(1)},1},\ldots,T_{i_{r_1}^{(1)},1}\}$, and $c_3$ attained at $T_{i_0^{(1)},1}$, i.e., $T_{i_0^{(1)},1}$ appears the least times in the $a_1$-position among the actions $S_i$. Recall also that
$C_t^{(1)}=\{i\in[\ell]\colon T_{i,1}=T_{i_t^{(1)},1}\}$, for each $0\leq t\leq r_1$ and that we have the partition $C_0^{(0)} = \bigcup_{t=0}^{r_1} C_t^{(1)}$.

Applying \cref{Katai for actions} for $\CP(a_1)$, $\limsup_{N\to\infty}\|A_N(\CS,\CF)\|_2^2$ (where $A_N(\CS,\CF)$ is defined in \eqref{main_average})is bounded by
\begin{multline*}
    \E_{k_{0,0},k_{0,1}\in[K]}\lim_{N\to\infty}\E_{n\in[N]}\int_X \prod_{i\in C_0^{(0)}}\prod_{j=1}^m T_{i,j}^{a_j(n)}U_n\big(T_{i,1}^{k_{0,0}} F_i\cdot T_{i,1}^{k_{0,1}}\overline{F_i}\big) \\
    = \E_{k_{0,0},k_{0,1}\in[K]}\lim_{N\to\infty}\E_{n\in[N]}\int_X \prod_{i\in C_0^{(0)}}\prod_{j=1}^m T_{i,j}^{a_j(n)}\big(T_{i,1}^{k_{0,0}} F_i\cdot T_{i,1}^{k_{0,1}}\overline{F_i}\big),
\end{multline*}
provided that the limit exists for all $k_{0,0},k_{0,1}\in[K]$. 
Composing with the inverse of the first action, or any other action such that the transformation corresponding to $a_1$ is equal to $T_{i_0^{(1)},1}$, we see that our new average has third complexity parameter at most $c_3-1$ if $c_3>1$, or second complexity parameter at most $c_2-1$ if $c_3=1$. In any case, the complexity is smaller, so the limit exists in $L^2(\mu)$ by assumption. It follows that the limit of the integral also exists for any $k_{0,0},k_{0,1}$.

To summarize, we have eliminated all actions that have the identity transformation in the $a_1$-position.
Now, we compose with 
$U_{i_1^{(1)},n}^{-1} = \prod_{j=1}^m T_{i_1^{(1)},j}^{-a_j(n)}$ (note that $i_1^{(1)}\in C_0^{(0)}$) and we use the Cauchy-Schwarz inequality to bound the above by
$$\E_{k_{0,0},k_{0,1}\in[K]}\lim_{N\to\infty}\Bigg\|\E_{n\in[N]}\prod_{\substack{i\in C_0^{(0)} \\ i\neq i_1^{(1)}}}\prod_{j=1}^m \big(T_{i,j}T_{i_1^{(1)},j}^{-1}\big)^{a_j(n)}\big(T_{i,1}^{k_{0,0}} F_i\cdot T_{i,1}^{k_{0,1}}\overline{F_i}\big)\Bigg\|_2.$$
Note that for the $i$-th actions with $i\in C_1^{(1)}$, we have by the definition of this set that $T_{i,1}T_{i_1^{(1)},1}^{-1} = I$.
We square this expression and then apply \cref{Katai for actions} for $\CP(a_1)$. Then all the $i$-th actions with $i\in C_1^{(1)}$ vanish and the $\limsup_{N\to\infty}\norm{A_{N}(\CS,\CF)}_2^4$ is bounded by
\begin{multline*}
    \E_{\substack{k_{t,u_t}\in[K] \\ t\in \{0,1\},\\\underline{u}\in\{0,1\}^2}}\lim_{N\to\infty}\E_{n\in[N]}\int_X \prod_{i\in C_0^{(0)}\setminus C_1^{(1)}} \prod_{j=1}^m T_{i,j}^{a_j(n)} \\
    \Bigg(\prod_{\underline{u}\in\{0,1\}^2}T_{i,1}^{k_{0,u_0}}\big(T_{i,1}T_{i_1^{(1)},1}^{-1}\big)^{k_{1,u_1}} \CC^{|\underline{u}|}F_i\Bigg)\ d\mu,
\end{multline*}
where we have composed with $U_{i_1^{(1)},n}$, provided that the limit exists for all the preceding parameters. We can show that the limit exists by the same argument as before. 

Now, we compose with $U_{i_2^{(1)},n}^{-1} = \prod_{j=1}^m T_{i_2^{(1)},j}^{-a_j(n)}$ (note that $i_2^{(1)}\in C_0^{(0)}\setminus C_1^{(1)}$) and we use the Cauchy-Schwarz inequality to get a bound by
\begin{multline*}
    \E_{\substack{k_{t,u_t}\in[K] \\ t\in \{0,1\},\\\underline{u}\in\{0,1\}^2}}\lim_{N\to\infty}\Bigg\|\E_{n\in[N]} \prod_{\substack{i\in C_0^{(0)}\setminus C_1^{(1)} \\ i\neq i_2^{(1)}}}\prod_{j=1}^m\big(T_{i,j}T_{i_2^{(1)},j}^{-1}\big)^{a_j(n)} \\
    \Bigg(\prod_{\underline{u}\in\{0,1\}^2} T_{i,1}^{k_{0,u_0}}\big(T_{i,1}T_{i_1^{(1)},1}^{-1}\big)^{k_{1,u_1}} \CC^{|\underline{u}|}F_i\Bigg)\Bigg\|_2.
\end{multline*}

By the definition of the set $C_2^{(1)}$, for all $i\in C_2^{(1)}$, we have $T_{i,1}T_{i_2^{(1)},1}^{-1}=I$.
We square this expression and then apply \cref{Katai for actions} for $\CP(a_1)$. Then all the actions $i$-th actions with $i\in C_1^{(1)}\cup C_2^{(1)}$ vanish and the $\limsup_{N\to\infty}\|A_N(\CS,\CF)\|_2^8$ is bounded by
\begin{multline*}
    \E_{\substack{k_{t,u_t}\in[K] \\ t\in\{0,1,2\},\\\underline{u}\in\{0,1\}^3}}\lim_{N\to\infty}\E_{n\in[N]}\int_X \prod_{i\in C_0^{(0)}\setminus (C_1^{(1)}\cup C_2^{(1)})}\prod_{j=1}^m T_{i,j}^{a_j(n)} \\
    \Bigg(\prod_{\underline{u}\in\{0,1\}^3} T_{i,1}^{k_{0,u_0}}\big(T_{i,1}T_{i_1^{(1)},1}^{-1}\big)^{k_{1,u_1}}\big(T_{i,1}T_{i_2^{(1)},1}^{-1}\big)^{k_{2,u_2}} \CC^{|\underline{u}|}F_i\Bigg)\ d\mu,
\end{multline*}
where we composed with $U_{i_2^{(1)},n}$ in the last integral.

We continue this process until all the actions $i$-th actions with $i\in\bigcup_{t=1}^{r_1} C_t^{(1)} = C_0^{(0)}\setminus C_0^{(1)}$ have vanished, thus, the only actions remaining have $T_{i,1}=T_{i_0^{(1)},1}$. By definition, this will happen after $c(a_1)=r_1+1$ applications of \cref{Katai for actions}. Then, we have bounded the $2^{r_1+1}$ power of our initial average by 
\begin{align*}
    & \E_{\substack{k_{t,u_t}\in[K] \\ t\in \{0,\ldots, r_1\},\\\underline{u}\in\{0,1\}^{r_1+1}}}\lim_{N\to\infty}\E_{n\in[N]}\int_X \prod_{i\in C_0^{(1)}} \left(T_{i,1}^{a_1(n)}\prod_{j=2}^m T_{i,j}^{a_j(n)}\right) \\
    & \hspace{6cm} \Bigg(\prod_{\underline{u}\in\{0,1\}^{r_1+1}}\bigg(T_{i,1}^{k_{0,u_0}}\prod_{t=1}^{r_1} \big(T_{i,1}T_{i_t^{(1)},1}^{-1}\big)^{k_{t,u_t}}\bigg)\CC^{|\underline{u}|}F_i\Bigg)\ d\mu \\
    & = \E_{\substack{k_{t,u_t}\in[K] \\ t\in\{0,\ldots, r_1\},\\\underline{u}\in\{0,1\}^{r_1+1}}}\lim_{N\to\infty}\E_{n\in[N]}\int_X \prod_{i\in C_0^{(1)}} \prod_{j=2}^m T_{i,j}^{a_j(n)} \\
    & \hspace{6cm} \Bigg(\prod_{\underline{u}\in\{0,1\}^{r_1+1}}\bigg(T_{i,1}^{k_{0,u_0}}\prod_{t=1}^{r_1} \big(T_{i,1}T_{i_t^{(1)},1}^{-1}\big)^{k_{t,u_t}}\bigg)\CC^{|\underline{u}|}F_i\Bigg)\ d\mu.
\end{align*}
Note that $C_0^{(1)}=\Loc_1(\CS)$, hence the actions surviving in the integral are exactly the ones in the $1$-localization.
This concludes the first step, as we have completely eliminated the $a_1$-position from our actions. If $D=1$, the procedure ends here. Otherwise, we continue to the next position.

\textbf{$a_2$-position:} We deal with the $a_2$-position in a similar fashion. Recall that we have defined
$\{T_{i,2}\colon i\in C_0^{(1)}\} = \{T_{i_0^{(2)},2},T_{i_1^{(2)},2},\ldots,T_{i_{r_2}^{(2)},2}\}$, 
where $r_2=c(a_2;\Loc_1(\CS))+\1_{\{\exists\, i \in C_0^{(1)}\colon T_{i,j} = I\}} - 1$ and $c_3(\Loc_1(\CS))$ is attained at $T_{i_0^{(2)},2}\neq I$, i.e., $T_{i_0^{(2)},2}\neq I$ appears the least times in the $a_2$-position among the actions $S_i$ with $i\in C_0^{(1)}$.
Recall also that $C_t^{(2)} = \{i\in C_0^{(1)}\colon T_{i,2} = T_{i_t^{(2)},2}\}$, for each $0\leq t\leq r_2$, and that we have the partition $C_0^{(1)} = \bigcup_{t=0}^{r_2} C_t^{(2)}$. 
Then, we start the same process as before, this time applying \cref{Katai for actions} for the set $\CP(a_2)$, until we eliminate the $a_2$-position completely. First, we compose with $\prod_{j=2}^m T_{i_1^{(2)},j}^{-a_j(n)}$ (note that $i_1^{(2)}\in C_0^{(1)}$), and then we use the Cauchy-Schwarz inequality to bound the last expression by
\begin{multline*}
    \E_{\substack{k_{t,u_t}\in[K] \\ t\in\{0,\ldots, r_1\},\\
    \underline{u}\in\{0,1\}^{r_1+1}}}\lim_{N\to\infty}\Bigg\|\E_{n\in[N]} \prod_{\substack{i\in C_0^{(1)} \\ i\neq i_1^{(2)}}}\prod_{j=2}^m \big(T_{i,j}T_{i_1^{(2)},j}^{-1}\big)^{a_j(n)} \\
    \Bigg(\prod_{\underline{u}\in\{0,1\}^{r_1+1}}\bigg(T_{i,1}^{k_{0,u_0}}\prod_{t=1}^{r_1} \big(T_{i,1}T_{i_t^{(1)},1}^{-1}\big)^{k_{t,u_t}}\bigg)\CC^{|\underline{u}|}F_i\Bigg)\Bigg\|_2.
\end{multline*}
We square this expression and then apply \cref{Katai for actions} for $\CP(a_2)$. Then all the $i$-th actions with $i\in C_1^{(2)}$ vanish and the $\limsup_{N\to\infty}\|A_N(\CS,\CF)\|_2^{2^{r_1+2}}$ is bounded by
\begin{multline*}
    \E_{\substack{k_{t,u_t}\in[K] \\ t\in \{0,\ldots, r_1+1\},\\\underline{u}\in\{0,1\}^{r_1+2}}}\lim_{N\to\infty}\E_{n\in[N]}\int_X \prod_{i\in C_0^{(1)}\setminus C_1^{(2)}} \prod_{j=2}^m T_{i,j}^{a_j(n)} \\
    \Bigg(\prod_{\underline{u}\in\{0,1\}^{r_1+2}}\bigg(T_{i,1}^{k_{0,u_0}}\prod_{t=1}^{r_1} \big(T_{i,1}T_{i_t^{(1)},1}^{-1}\big)^{k_{t,u_t}} \big(T_{i,2}T_{i_1^{(2)},2}^{-1}\big)^{k_{t,u_t}}\bigg)\CC^{|\underline{u}|}F_i\Bigg)\ d\mu,
\end{multline*}
where we have composed with $\prod_{j=2}^m T_{i_1^{(2)},j}^{a_j(n)}$, provided that the limit exists for all parameters in the outer average, which is true by arguing as we did in the previous position. 
Then, we compose with $\prod_{j=2}^m T_{i_2^{(2)},j}^{-a_j(n)}$, and then we use the Cauchy-Schwarz inequality, and then \cref{Katai for actions} to get an analogous bound for $\limsup_{N\to\infty}\|A_N(\CS,\CF)\|_2^{2^{r_1+3}}$, where all the $i$-th actions with $i\in C_1^{(2)}\cup C_2^{(2)}$ have vanished. We continue until all the $i$-th actions with $i\in\bigcup_{i=1}^{r_2} C_t^{(2)} = C_0^{(1)}\setminus C_0^{(2)}$ have been eliminated. Ultimately,  we have bounded the $\limsup_{N\to\infty}\|A_N(\CS,\CF)\|_2^{2^{r_1+r_2+1}}$ power of our initial average by  
\begin{multline*}
    \E_{\substack{k_{t,u_t}\in[K] \\ t\in\{0,\ldots, r_1+r_2\},\\\underline{u}\in\{0,1\}^{r_1+r_2+1}}}\lim_{N\to\infty}\E_{n\in[N]}\int_X \prod_{i\in C_0^{(2)}} \prod_{j=3}^m T_{i,j}^{a_j(n)} \\
    \Bigg(\prod_{\underline{u}\in\{0,1\}^{r_1+r_2+1}}\bigg(T_{i,1}^{k_{0,u_0}}\prod_{t=1}^{r_1} \big(T_{i,1}T_{i_t^{(1)},1}^{-1}\big)^{k_{t,u_t}}\prod_{t=r_1+1}^{r_1+r_2}\big(T_{i,2}T_{i_{t-r_1}^{(2)},2}^{-1}\big)^{k_{t,u_t}}\bigg)\CC^{|\underline{u}|}F_i\Bigg)\ d\mu.
\end{multline*}

If $D=2$, the procedure ends here. Otherwise, we continue in the next position, in which we repeat the process exactly as we did for the previous two positions.

\textbf{Final bound:}
We continue with the subsequent positions until all of them have been eliminated. This happens at the $D$-th position, which is the smallest $j\in[m]$ such that $C_0^{(j)}$ is a singleton. Let $i_0\in[\ell]$ such that $C_0^{(D)} = \{i_0\}$, and $r=\sum_{j=1}^D r_j$. Then, the $\limsup_{N\to\infty}\|A_N(\CS,\CF)\|_2^{2^{r+1}}$ is bounded by 
\begin{multline*}
    \E_{\substack{k_{t,u_t}\in[K] \\ t\in\{0,\ldots, r\},\\\underline{u}\in\{0,1\}^{r+1}}}\lim_{N\to\infty}\E_{n\in[N]}\int_X \prod_{i\in C_0^{(D)}}\prod_{j=D+1}^m T_{i,j}^{a_j(n)} \\
    \Bigg(\prod_{\underline{u}\in\{0,1\}^{r+1}}\bigg(T_{i,1}^{k_{0,u_0}}\prod_{d=1}^D\prod_{t=r_1+\ldots+r_{d-1}+1}^{r_1+\ldots+r_d}\Big(T_{i,d}T_{i_{t- (r_1+\dots+r_{d-1})}^{(d)},d}^{-1}\ \Big)^{k_{t,u_t}}\bigg)\CC^{|\underline{u}|}F_i\Bigg)\ d\mu,
\end{multline*}
where we have set $r_0=0$. Using that $C_0^{(D)} = \{i_0\}$ and composing with $\prod_{j=D+1}^m T_{i_0,j}^{-a_j(n)}$ in the integral, we rewrite this as
\begin{multline*}
    \E_{\substack{k_{t,u_t}\in[K] \\ t\in \{0,\ldots, r\},\\\underline{u}\in\{0,1\}^{r+1}}}\int_X
    \prod_{\underline{u}\in\{0,1\}^{r+1}}\Bigg(T_{i_0,1}^{k_{0,u_0}}\prod_{d=1}^D\prod_{t=r_1+\ldots+r_{d-1}+1}^{r_1+\ldots+r_d}\Big(T_{i_0,d}T_{i_{t- (r_1+\ldots+r_{d-1})}^{(d)},d}^{-1}\Big)^{k_{t,u_t}}\Bigg)\\
    \CC^{|\underline{u}|}F_{i_0}\ d\mu.
\end{multline*}
By the definition of the box seminorms, the limit of this last expression as $K\to\infty$ is equal to the $2^{r+1}$-th power of the box seminorm of $F_{i_0}$ with respect to the transformations appearing in the products above (we have $2^{r+1}$ transformations in total).
More precisely, we have shown that for any collection $\CF$ of function with values in the unit circle, we have
\begin{multline}\label{seminorm_bound_eq}
    \limsup_{N\to\infty}\|A_N(\CS,\CF)\|_2 \\
    \ll\nnorm{F_{i_0}}_{T_{i_0,1},T_{i_0,1}T_{i_1^{(1)},1}^{-1},T_{i_0,1}T_{i_2^{(1)},1}^{-1},\ldots,T_{i_0,1}T_{i_{r_1}^{(1)},1}^{-1},\ldots,T_{i_0,D}T_{i_1^{(D)},D}^{-1},T_{i_0,D}T_{i_2^{(D)},D}^{-1},\ldots,T_{i_0,D}T_{i_{r_D}^{(D)},D}^{-1}}.    
\end{multline}By linearity, a similar bound holds for all collections of 1-bounded functions. 

\textbf{Reduction:}
We now proceed to the second step, which is to exploit the seminorm control to reduce our problem to the convergence of a new average, for which we will eventually prove that it has a lower complexity than the original.

Let $\{T_i\colon i\in[k]\} = \{T_{i,j}\colon i\in[\ell],j\in[m]\}$, for some $k\in[\ell m]$ be an enumeration of all the transformations appearing in the initial average.
In order to extract some valuable information from the previous seminorm, we will use the machinery of magic extensions. Consider the $\Z^k$-system $\textbf{X} = (X,\mu,(T_i)_{i\in[k]})$.
Note that the transformations appearing in the box seminorm in \eqref{seminorm_bound_eq} generate the same group as the group generated by the transformations $T_{i_0}, T_{i_r^{(d)},d}$ for  $d\in [D], r\in [r_j]$.
For simplicity, we denote the above box seminorm by $\nnorm{F_{i_0}}$.
Then, by \cref{prop_Nikos_and_Borys}, $\textbf{X}$ admits an extension $\textbf{X}^\ast=(X^\ast,\mu^\ast,(T_i^\ast)_{i\in[k]})$ that is magic with respect to $T_{i_0}^\ast$, $T_{i_0,d}^\ast (T_{i_t^{(d)},d}^\ast)^{-1}$, with $d\in[D]$, $t\in[r_d]$. We use the notation $\nnorm{\cdot}^\ast$ to denote the box seminorm in the system $\textbf{X}^\ast$ with the same directions as the ones appearing in $\nnorm{F_{i_0}}$.
Then, by \cref{defn magic}, the $\sigma$-algebra 
$$\CZ^\ast := \I(T_{i_0}^\ast)\vee\bigg(\bigvee_{d=1}^D\bigvee_{t=1}^{r_d}\I({T_{i_0,d}^\ast (T_{i_t^{(d)},d}^\ast)^{-1}})\bigg)$$
satisfies the following:
for any $F^\ast\in L^\infty(\mu^\ast)$, if $\mathcal{E}_{\mu^\ast}(F^\ast|\CZ^\ast)=0$, then $\nnorm{F^\ast}^\ast=0$. 
From its definition, the $\sigma$-algebra $\CZ^{\ast}$ is generated by linear combinations of functions that are products of the form $G_1^{\ast}\cdot \ldots \cdot G_{r+1}^{\ast}$ (where we recall that $r+1$ is the number of transformations in the seminorm), where each function $G_i^\ast$ is invariant under exactly one of the transformations appearing in the seminorm $\nnorm{F_{i_0}^\ast}^\ast$.

Let $\CS^\ast=\{S^\ast_1,\ldots,S^\ast_\ell\}$ be the collection of actions, where each $S_i^\ast$ is defined by replacing every generator $T$ from $\textbf{X}$ with the corresponding transformation $T^\ast$ from $\textbf{X}^\ast$.
It is straightforward that $(X^\ast,\mu^\ast,S^\ast_1,\ldots,S^\ast_\ell)$ is an extension of $(X,\mu,S_1,\ldots,S_\ell)$. 
Thus, it suffices to establish the convergence of the average $A_N(\CS,\CF)$ after passing to this extension. Hence, we shall prove that for any collection $\CF^\ast$ of functions $F^\ast_1,\ldots,F^\ast_\ell\in L^\infty(\mu^\ast)$ with $|F^\ast_1|=1$ for all $i\in[\ell]$, the average
\begin{equation}\label{average on extension}
    A_N(\CS^\ast,\CF^\ast) = \E_{n\in[N]}\prod_{i=1}^\ell\Bigg(\prod_{j=1}^m {T^\ast_{i,j}}^{a_j(n)}U^\ast_n\Bigg)F^\ast_i
\end{equation}
converges in $L^2(\mu^\ast)$.
Since we have proven 
\begin{equation*}
    \limsup_{N\to\infty}\|A_N(\CS^\ast,\CF^\ast)\|_2 \ll {\nnorm{F^\ast_{i_0}}^\ast},
\end{equation*}
then by decomposing $$F^{\ast}_{i_0}=\mathcal{E}_{\mu^\ast}(F^\ast_{i_0}|\CZ^\ast )+(F^{\ast}_{i_0}-\mathcal{E}_{\mu^\ast}(F^\ast_{i_0}|\CZ^\ast))$$
and using the seminorm bound (note that the second function has zero box seminorm), we reduce to the case where  $F^\ast_{i_0}$ is measurable with respect to $\CZ^{\ast}$.

By the definition of this $\sigma$-algebra, we have that the set consisting of all the finite linear combinations of functions of the form $F^\ast_0\cdot \prod_{d=1}^D\prod_{t=1}^{r_d}{F^\ast_{t,d}}$, where $F^\ast_0$ is $T^\ast_{i_0,1}$-invariant, and each ${F^\ast_{t,d}}$ is $T^\ast_{i_0,d}({T^\ast_{i_t^{(d)},d}})^{-1}$-invariant, is dense in $L^\infty(\mu^\ast,\CZ^\ast)$. Hence, it is enough to consider the case that $F^\ast$ is equal to a function of this form. Substituting this function on \eqref{average on extension} we obtain 
\begin{multline*}
    \E_{n\in[N]}\prod_{\substack{i=1 \\ i\neq i_0}}^\ell\Bigg(\prod_{j=1}^m {T^\ast_{i,j}}^{a_j(n)}U^\ast_n\Bigg)F^\ast_i \cdot \Bigg(\prod_{j=2}^m {T^\ast_{i_0,j}}^{a_j(n)}U^\ast_n\Bigg) F^\ast_0 \\
    \cdot \prod_{d=1}^D\prod_{t=1}^{r_d}\Bigg({T^\ast_{i_t^{(d)},d}}^{a_d(n)}\prod_{\substack{j=1 \\ j\neq d}}^m {T^\ast_{i_0,j}}^{a_j(n)}U^\ast_n\Bigg)F^\ast_{t,d}.
\end{multline*}
Thus, it suffices to prove that the latter average converges in $L^2(\mu^\ast)$.
This concludes the {\em complexity reduction procedure for $(\CS,\CF)$ terminating in the $D$-th position}.

From now on, for notational convenience and since we will repeatedly pass to extensions, we will omit the asterisk notation. That is, we will be working on $(X,\mu,S_1,\ldots,S_\ell)$, even though we are actually working with extensions of the form $(X^\ast,\mu^\ast,S^\ast_1,\ldots,S^\ast_\ell)$.

We have now established the following theorem, which provides the main technical ingredient for the proof of \cref{main_theorem_simple}.

\begin{Proposition}\label{complexity reduction procedure thm}
Let $\ell\in\N$, $m\in\N$, and $\CS=\CS_{\ell,m}=\{S_1,\ldots,S_\ell\}$ be a collection of actions on a probability space $\xm$ with $S_i=(T_{i,1},\ldots,T_{i,m};U)$ for each $i\in[\ell]$, with complexity vector $\textbf{c}=(c_1,c_2,c_3)$, where $c_1=m$. Let also $\CF=\{F_i\in L^\infty(\mu)\colon |F_i|=1, i\in[\ell]\}$. Finally, let $D\in\N$ be the position in which the complexity procedure for $(\CS,\CF)$ is terminated, and consider $(r_d)_{d\in[D]}$ and $(T_{i_t^{(d)},d})_{\substack{d\in[D] \\ t\in[r_d]}}$ as in the procedure.
If any average with complexity vector $\textbf{c}'<\textbf{c}$ converges, then there exists $i_0\in[\ell]$ such that following hold:
\begin{enumerate}
    \item\label{item1} $T_{i_0,1}$ attains $c_3$, $T_{i_0,d}\neq I$ for all $d\in[D]$, $T_{i_t^{(1)},1}\neq I$ for all $t\in[r_1]$, and $T_{i_t^{(d)},d}\neq T_{i_0,d}$ for all $d\in[D]$ and all $t\in[r_d]$.
    \item\label{item2} We have
    \begin{multline}\label{seminorm bound eq}
        \limsup_{N\to\infty}\|A_N(\CS,\CF)\|_2 \\
        \ll\nnorm{F_{i_0}}_{T_{i_0,1},T_{i_0,1}T_{i_1^{(1)},1}^{-1},T_{i_0,1}T_{i_2^{(1)},1}^{-1},\ldots,T_{i_0,1}T_{i_{r_1}^{(1)},1}^{-1},\ldots,T_{i_0,D}T_{i_1^{(D)},D}^{-1},T_{i_0,D}T_{i_2^{(D)},D}^{-1},\ldots,T_{i_0,D}T_{i_{r_D}^{(D)},D}^{-1}}.
    \end{multline}
    \item\label{item3} If we have convergence for the following average
    \begin{multline}\label{reduced average eq}
        \E_{n\in[N]}\prod_{\substack{i=1 \\ i\neq i_0}}^\ell\Bigg(\prod_{j=1}^m T_{i,j}^{a_j(n)}U_n\Bigg)F_i \cdot \Bigg(\prod_{j=2}^m T_{i_0,j}^{a_j(n)}U_n\Bigg) F_0 \\
        \cdot \prod_{d=1}^D\prod_{t=1}^{r_d}\Bigg(T_{i_t^{(d)},d}^{a_d(n)}\prod_{\substack{j=1 \\ j\neq d}}^m T_{i_0,j}^{a_j(n)}U_n\Bigg)F_{t,d},
    \end{multline}
    then we also have convergence for $A_N(\CS,\CF)$.
\end{enumerate}
\end{Proposition}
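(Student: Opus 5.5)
The proof is essentially a careful bookkeeping of the complexity reduction procedure just described, so I would organise it around the three items in turn, choosing $i_0$ to be the unique element of $C_0^{(D)}$.

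\emph{Item (1).} This follows directly from the construction. Since $C_0^{(D)}\subseteq C_0^{(d)}$ for every $d\le D$, we have $T_{i_0,d}=T_{i_0^{(d)},d}$ for each $d\in[D]$. By construction $T_{i_0^{(d)},d}$ attains $c_3(\Loc_{d-1}(\CS))$ and is $\neq I$. For $d=1$, $\Loc_0(\CS)=\CS$, so $T_{i_0,1}$ attains $c_3$. For $d=1$ the set $\{T_{i,1}\colon i\in C_0^{(0)}\}$ contains only non-identity maps, so $T_{i_t^{(1)},1}\neq I$. Finally, the listed transformations $T_{i_t^{(d)},d}$, $0\le t\le r_d$, are distinct by definition, so $T_{i_t^{(d)},d}\neq T_{i_0,d}$ for $t\ge1$.

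\emph{Item (2).} Here I would formalise the iterated argument by an induction on the number of applications of \cref{Katai for actions}. The inductive statement is: after eliminating positions $a_1,\ldots,a_{d-1}$ and the classes $C_1^{(d)},\ldots,C_s^{(d)}$, the quantity $\limsup\|A_N\|_2^{2^{q}}$, with $q=r_1+\dots+r_{d-1}+s+1$, is bounded by an averaged integral of the displayed cubic form. The inductive step has three parts.
\begin{itemize}
\item Compose with $\prod_{j\ge d}T_{i_{s+1}^{(d)},j}^{-a_j(n)}$.
\item Apply Cauchy--Schwarz, then \cref{Katai for actions} with $\CP(a_d)$, which is large. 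Actions with $T_{i,d}T_{i_{s+1}^{(d)},d}^{-1}=I$ drop out because the functions have modulus one.
\item Compose back.
\end{itemize}
The hypothesis of \cref{Katai for actions}, namely that the relevant limits exist, is where the inductive assumption ``$\textbf{c}'<\textbf{c}$ converges'' enters. The averages in question are indexed by a proper subcollection of actions normalised by one of them, so I must check that their complexity vector is strictly smaller. I would argue as follows. Composing with the inverse of an action whose $a_1$-entry is $T_{i_0^{(1)},1}$ turns that entry into $I$ for all $c_3$ of its occurrences. This lowers $c(T_{i_0^{(1)},1})$, and hence lowers $c_3$ or $c_2$, while $c_1$ does not increase (some position may disappear entirely, which lowers $c_1$). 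In later steps, the surviving actions form subcollections of $\Loc_{d-1}(\CS)$, in which the $a_1$-position carries at most one non-identity map after normalisation. In fact, by \cref{remark about complexity vectors}, all earlier positions are already eliminated.

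After position $D$ only $i_0$ remains. Composing with its remaining action and letting $K\to\infty$ gives, by the definition \eqref{eq_expanding the seminorm}, the $2^{r+1}$-th power of the box seminorm. Extending from unimodular to $1$-bounded functions uses remark (a) and multilinearity.

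\emph{Item (3).} I would pass to the magic extension provided by \cref{prop_Nikos_and_Borys}. Its hypothesis holds: the seminorm directions generate the same group as $T_{i_0,1}$ and the $T_{i_t^{(d)},d}$, so after relabelling they generate a subset of the generators. Convergence is inherited by factors, so we may work in the extension. Decompose $F_{i_0}=\mathcal{E}(F_{i_0}|\CZ)+(\text{rest})$; the rest contributes $0$ by item (2). By the magic property and an $L^2$ approximation (the average is uniformly Lipschitz in $F_{i_0}$ in $L^2$ since the other functions are bounded), reduce to products $F_0\prod F_{t,d}$ with the stated invariances. Then rewrite each factor using the invariances: $T_{i_0,1}F_0=F_0$, and $T_{i_0,d}F_{t,d}=T_{i_t^{(d)},d}F_{t,d}$. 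This yields exactly \eqref{reduced average eq}.

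The main obstacle is the complexity check inside item (2): verifying that every intermediate average used to justify existence of limits has strictly smaller complexity vector. This is especially delicate for the steps $d\ge2$, where identity maps may be among the $T_{i_t^{(d)},d}$.
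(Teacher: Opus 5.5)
Your proposal is correct and follows the paper's own argument essentially step by step: $i_0$ is the unique element of $C_0^{(D)}$, and the positions $a_1,\ldots,a_D$ are eliminated by repeated normalisation, Cauchy--Schwarz and \cref{Katai for actions}, with the needed limits supplied by the lower-complexity hypothesis; one then passes to a magic extension via \cref{prop_Nikos_and_Borys} and substitutes products of invariant functions to reach \eqref{reduced average eq}. The complexity check you single out as most delicate for $d\ge 2$ is in fact immediate, because once the $a_1$-position is eliminated the relevant collections involve only $a_2,\ldots,a_m$ and so have smaller first parameter; the checks that actually need an argument are those inside the $a_1$-position, where normalising by an action with a given $a_1$-entry removes that entry, so $c(a_1)$ drops to at most $c_2-1$ or the position disappears.
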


In view of \cref{complexity reduction procedure thm}, given a collection of actions $\CS$, there is a collection of actions which we denote by $\CR(\CS)$ and is uniquely determined by the complexity reduction procedure, such that convergence for $A_N(\CR(\CS),\mathcal{G})$ for any collection $\mathcal{G}$ of measurable functions with modulus $1$, implies convergence for $A_N(\CS,\CF)$ for any collection $\CF$ of measurable functions with modulus $1$. Then, we say that {\em $\CS$ is reduced to $\CR(\CS)$ after one application of the complexity reduction procedure}. Under the notation of \cref{complexity reduction procedure thm}, the collection $\CR(\CS)$ consists of all the actions appearing in \eqref{reduced average eq}, that is,
\begin{multline}\label{RS defn}
    \CR(\CS):=\{S_i\colon i\in[\ell],i\neq i_0\}\cup\{(S_{i_0,n}T_{i_0,1}^{-a_1(n)})_{n\in\N}\}\\
    \cup\Bigg\{\Bigg(T_{i_t^{(d)},d}^{a_d(n)}\prod_{\substack{j=1 \\ j\neq d}}^mT_{i_0,j}^{a_j(n)}U_n\Bigg)_{n\in\N}\colon d\in[D], t\in[r_d]\Bigg\}.
\end{multline}

\subsection{The proof of \cref{main_theorem_simple}}
\label{subsection_proof}

Whenever convergence for a collection of actions with complexity vector $\textbf{c}$ can be deduced by the convergence for another collection with complexity vector $\textbf{c}'<\textbf{c}$ via finitely many applications of the complexity reduction procedure, we write
$$\begin{tikzcd}[row sep=1cm, nodes={inner sep=2pt, minimum height=1em}]
    \textbf{c} \arrow[d] \\
    \textbf{c}'
\end{tikzcd}$$

To prove \cref{main_theorem_simple}, it suffices to establish the next proposition.

\begin{Proposition}\label{arrows prop}
    Let $\textbf{c}=(c_1,c_2,c_3)\in\mathbf{C}$. The following hold:
    \begin{enumerate}[label=(\alph*)]
        \item\label{hard arrow} If $c_3>1$, then
        $$\begin{tikzcd}[row sep=1cm, nodes={inner sep=2pt, minimum height=1em}]
            (c_1,c_2,c_3) \arrow[d] \\
            (c_1,c_2,1)
        \end{tikzcd}$$

        \item\label{easy arrow 1} If $c_2>1$ and $c_3=1$, then
        $$\begin{tikzcd}[row sep=1cm, nodes={inner sep=2pt, minimum height=1em}]
            (c_1,c_2,1) \arrow[d] \\
            (c_1,c_2-1,c_3')
        \end{tikzcd}$$
        for some $c_3'$.

        \item\label{easy arrow 2} If $c_1>1$, $c_2=1$ and $c_3=1$, then
        $$\begin{tikzcd}[row sep=1cm, nodes={inner sep=2pt, minimum height=1em}]
            (c_1,1,1) \arrow[d] \\
            (c_1-1,c_2',c_3')
        \end{tikzcd}$$
        for some $c_2',c_3'$.
    \end{enumerate}
\end{Proposition}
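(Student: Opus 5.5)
The plan is to apply \cref{complexity reduction procedure thm} to $\CS$ and compute the complexity vector of the reduced collection $\CR(\CS)$ from \eqref{RS defn}. In each of the three cases we iterate this as many times as needed. Throughout, the position $a_1$ realizes $c_2=c(a_1)$, and $T_{i_0,1}$ attains $c_3$ there.

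The key observation concerns the $a_1$-position of $\CR(\CS)$. The action $S_{i_0}$ is replaced by $S_{i_0,n}T_{i_0,1}^{-a_1(n)}$, which has the identity in the $a_1$-position. The new actions indexed by $d\in[D]$ with $d\geq 2$ keep $T_{i_0,1}$ in the $a_1$-position. The new actions indexed by $d=1$, $t\in[r_1]$, have $T_{i_t^{(1)},1}$ in the $a_1$-position; these are non-identity by item \eqref{item1} and already occur in that position. All other actions are unchanged. So the set of non-identity transformations in position $a_1$ is unchanged, except that $T_{i_0,1}$ loses the occurrence coming from $S_{i_0}$ and gains one occurrence for each $d\geq2$ with $r_d\geq1$.

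\textbf{Case \ref{easy arrow 1}.} Here $c_3=1$, so $T_{i_0,1}$ appears only once, namely in $S_{i_0}$. Then $C_0^{(1)}=\{i_0\}$, hence $D=1$, and no new copies of $T_{i_0,1}$ are created. Therefore $c(a_1;\CR(\CS))=c_2-1$. Positions $j\geq2$ receive only transformations that already appeared, so their counts $c(a_j)$ do not increase, and $c_1$ is unchanged. This gives $c_2(\CR(\CS))\le c_2-1$. Strictly we get $(c_1,c_2'',\cdot)$ with $c_2''\le c_2-1$; if $c_2''<c_2-1$, the complexity is lower still, and the arrow is interpreted up to lexicographic lowering, which is all that is used.

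\textbf{Case \ref{easy arrow 2}.} Here $c_2=c_3=1$, so only one action has a non-identity transformation in the $a_1$-position. The reduction removes it, and position $a_1$ becomes identically $I$. Then $a_1$ can be dropped, or merged into the common part $U$, which reduces $c_1$ to $c_1-1$. One must check that the relabeling into the form \eqref{actions final form eq} is legitimate: a position in which every action carries the same transformation can be absorbed into $U_n$.

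\textbf{Case \ref{hard arrow}.} This is the main obstacle. Now $c_3>1$, so $C_0^{(1)}$ has $c_3$ elements and $D\ge 2$. After one reduction, $S_{i_0}$ loses $T_{i_0,1}$. However, actions $(T_{i_0,1}$ in $a_1$, $T_{i_t^{(d)},d}$ in $a_d)$ are created for each $d\ge2$, so the multiplicity of $T_{i_0,1}$ need not drop. The strategy is a secondary induction on the localization depth. The new copies of $T_{i_0,1}$ differ from the remaining members of $\Loc_1$ at the earlier position $d$, where they carry $T_{i_t^{(d)},d}\ne T_{i_0,d}$. Hence each new copy has a strictly smaller localization chain, with $C_0^{(d-1)}$ shrinking. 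I would define an auxiliary potential, for instance the multiset of sizes $(|C_0^{(1)}|,\dots,|C_0^{(D)}|)$ ordered lexicographically, and show that it decreases at each application. I would also show that $c_2$ stays fixed and $c_3$ never exceeds its value, provided the minimizer is chosen consistently. Repeating the reduction finitely many times then drives the multiplicity of the minimizing transformation in $a_1$ down to $1$, giving $(c_1,c_2,1)$. The delicate points are that the minimizer may switch to another transformation $T_{i_t^{(1)},1}$ whose multiplicity grew, and that the potential must be well-founded across all positions; controlling both is where I expect the real work to lie.
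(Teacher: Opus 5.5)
Your arguments for (b) and (c) are the paper's. In (b) you even get $c_2(\CR(\CS))=c_2-1$ exactly, not just $\le$: every $S_i$ with $i\neq i_0$ survives, and $S_{i_0,n}T_{i_0,1}^{-a_1(n)}$ keeps its entries in positions $j\ge 2$, so those positions have unchanged transformation sets and $c(a_j)\ge c_2$.

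The genuine gap is in (a). You only outline a strategy, and the potential you propose does not decrease; already $|C_0^{(1)}|$ can grow. Take $m=3$, write an action as the triple of its transformations in positions $a_1,a_2,a_3$, and let $\CS=\{(T,A,E),(T,B,F),(T,C,G),(R,A,E),(R,B,F),(R,C,G)\}$ with all eight transformations distinct, so $\mathbf{c}=(3,2,3)$. Reducing with respect to $T$, and choosing $A$ in the $a_2$-position, gives $C_0^{(2)}=\{(T,A,E)\}$, $D=2$ and
\[
\CR(\CS)=\{(T,B,F),(T,C,G),(R,A,E),(R,B,F),(R,C,G),(I,A,E),(T,B,E),(T,C,E)\},
\]
where the new action $(R,A,E)$ merges with an old one. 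Consequences:
\begin{itemize}
\item The $T$-localization now has four elements instead of three.
\item $c_3=3$ is now attained only by $R$.
\item One further reduction with respect to $R$ yields complexity $(3,2,4)$, for every choice in the $a_2$-position.
\end{itemize}
So neither a potential led by $|C_0^{(1)}|$ nor the rule ``follow the current minimizer'' is monotone, and your worry about switching is justified. (In the paper this is the content of \cref{same c_1,c_2 and transformation}; the example shows it should be read as ``keep reducing with respect to $T$''.) Reducing with respect to the fixed $T$ is legitimate: the limit-existence steps inside the procedure only require convergence for collections with smaller $(c_1,c_2)$, and these reductions preserve $(c_1,c_2)$. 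The length of the $T$-localization then serves as an upper bound for $c_3$.

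What is missing is the mechanism that makes this length eventually equal to $1$, namely \cref{last technical lemma} combined with induction on $c_1$. View $\Loc(\CS)$ as a collection on the positions $a_2,\dots,a_m$, with $T^{a_1(n)}$ absorbed into the common part, so that $c_1(\Loc(\CS))=c_1-1$. Once the $a_1$-position is eliminated, the procedure for $\CR_{k-1}(\CS)$ is literally the procedure for its localization. Hence the actions created at positions $d\ge 2$ are exactly those created by reducing the localization, which gives $\Loc(\CR_k(\CS))^\sharp=\CR_k(\Loc(\CS))^\sharp$ and $\Loc(\CR_k(\CS))\subseteq\CR_k(\Loc(\CS))$. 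In the example, $\Loc(\CS)$ has complexity $(2,3,1)$, while $\CR(\Loc(\CS))=\{(B,F),(C,G),(I,E),(B,E),(C,E)\}$ has complexity $(2,2,2)$, even though its length went up.

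The proposition at level $c_1-1$, with all three arrows, then drives $\CR_k(\Loc(\CS))$ down to complexity $(1,1,1)$, hence to length $1$. Therefore $c_3(\CR_k(\CS))\le\ell(\Loc(\CR_k(\CS)))\le\ell(\CR_k(\Loc(\CS)))=1$. Without this comparison between reducing and localizing, your secondary induction has no decreasing quantity.
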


\cref{main_theorem_simple} follows from \cref{arrows prop} and \cref{thm_PMET}, since any complexity vector can be eventually reduced to $\mathbf{1}=(1,1,1)$, because any complexity vector $\textbf{c}=(c_1,c_2,c_3)$ with $c_1=c_2=1$ must have $c_3=1$, which corresponds to an average of the form 
$\E_{n\in[N]} S_nF$,
and this converges in $L^2(\mu)$ by \cref{thm_PMET}.
Thus, it remains to establish \cref{arrows prop}.

\begin{proof}[Proof of \ref{easy arrow 1} and \ref{easy arrow 2} of \cref{arrows prop}]
The proofs of \ref{easy arrow 1} and \ref{easy arrow 2} are essentially identical and each  of them requires just one application of the complexity reduction procedure. We therefore present only the proof of \ref{easy arrow 1}. 
Let $\ell\in\N$, $m\in\N$, and $\CS=\CS_{\ell,m}$ be a collection of actions, where for each $i\in[\ell]$, $n\in\N$, 
$$S_{i,n}=\prod_{j=1}^mT_{i,j}^{a_j(n)}U_n,$$
with complexity vector $\textbf{c}=(c_1,c_2,c_3)\in\mathbf{C}$ with $c_1=m$, $c_2>1$ and $c_3=1$. We apply the complexity reduction procedure. By \cref{complexity reduction procedure thm} \eqref{item1}, there is $i_0\in[\ell]$ such that $c_3$ is attained by $T_{i_0,1}$, which means that $T_{i_0,1}$ appears only once in the $a_1$-position in the actions of $\CS$, since $c_3=1$. By \cref{complexity reduction procedure thm} \eqref{item3}, $\CS$ is reduced to $\CR(\CS)$, where the latter is as in \eqref{RS defn}. It is clear that none of the actions in $\CR(\CS)$ has $T_{i_0,1}$ in the $a_1$-position, and then the set of all the transformations appearing in the $a_1$-position in the actions of $\CR(\CS)$ is $\{T_{i,1}\colon i\in[\ell],i\neq i_0\} = \{T_{i_t^{(1)},1}\colon t\in[r_1]\}$. Since $r_1=c(a_1)-1=c_2-1$, this set has cardinality $c_2-1$, thus, $c_2(\CR(\CS)) = c_2-1$, and this concludes the proof.
\end{proof}

It remains to prove \ref{hard arrow} of \cref{arrows prop}. Since the $1$-localization will be used often here, we simply call it {\em localization of} $\CS$ and we denote it by $\Loc(\CS)$. 
Given a collection of actions $\CS$, we define:
\begin{itemize}
    \item $\CS^\sharp:=\{S\in\CS\colon \text{the transformation in the}\ a_2\text{-position is not}\ I\};$ and
    \item $\CS^\flat = \CS\setminus\CS^\sharp$.
\end{itemize}
In addition, for $k\in\N$, we define the collection of actions $\CR_k(\CS)$ recursively as follows:
\begin{itemize}
    \item Let $\CR_0(\CS):=\CS$.
    \item For any positive integer $k$, we define $\CR_k(\CS):=\CR(\CR_{k-1}(\CS))$ if $\Loc(\CR_{k-1}(\CS))$ has length strictly greater than $1$.
\end{itemize}

The key observation and the reason for considering the localization is that for any collection of actions $\CS$, we have $c_3(\CS) = \ell(\Loc(\CS))$.
Consequently, to find $k\in \N$ such that $c_3(\CR_k(\CS))=1$, through finitely many applications of the complexity reduction procedure, it is enough to show $\ell(\Loc(\CR_k(\CS)))=1$.

\begin{Remark}\label{same c_1,c_2 and transformation}
    By the definition of the $\CR_k(\CS)$'s and by \cref{complexity reduction procedure thm}, we have that $c_1(\CR_k(\CS))=c_1(\CS)$ and $c_2(\CR_k(\CS))=c_2(\CS)$ for all $k\in\N$, and in addition, $c_3(\CR_k(\CS))$ is attained by the same transformation for all $k\in\N_0$.
\end{Remark}

\begin{Lemma}\label{last technical lemma}
    Let $\ell\in\N$, $m\in\N$, and $\CS=\CS_{\ell,m}$ be a collection of actions. For any $k\in\N_0$ for which $\CR_k(\CS)$ is defined, the following hold:
    \begin{itemize}
        \item[(i)] $\Loc(\CR_k(\CS))^\sharp = \CR_k(\Loc(\CS))^\sharp$.
        \item[(ii)] $\Loc(\CR_k(\CS))^\flat 
        = 
        \begin{cases}
            \CR_k(\Loc(\CS))^\flat, & \text{if}\ \Loc(\CS)^\flat\neq\emptyset,\\
            \emptyset, & \text{otherwise}.
        \end{cases}$
    \end{itemize}
    In particular, 
    $\Loc(\CR_k(\CS)) \subseteq \CR_k(\Loc(\CS))$, and they are equal if and only if $\Loc(\CS)^\flat\neq\emptyset$.
\end{Lemma}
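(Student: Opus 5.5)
Both claims are proved together by induction on $k$; the case $k=0$ is trivial because $\Loc(\CS)$ splits as $\Loc(\CS)^\sharp\cup\Loc(\CS)^\flat$. For the inductive step, write $\CS_k:=\CR_k(\CS)$ and $\CL_k:=\CR_k(\Loc(\CS))$, and assume (i) and (ii) for $k-1$. The key input is the explicit description \eqref{RS defn} of one application of $\CR$. The $i_0$-th action loses its $a_1$-component. Every other action is kept. For each $d\in[D]$ and $t\in[r_d]$, a new action is added, equal to $S_{i_0}$ with its $a_d$-entry replaced by $T_{i_t^{(d)},d}$. By \cref{same c_1,c_2 and transformation}, the transformation $T:=T_{i_0,1}$ attaining $c_3$ is the same at every stage, so $\Loc(\CS_k)$ always consists of the actions of $\CS_k$ with $T$ in the $a_1$-position.

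First I match the data of the procedure on $\CS_{k-1}$ and on $\CL_{k-1}$. The procedure starts by passing to the $1$-localization $C_0^{(1)}$, which is exactly $\Loc(\CS_{k-1})$. By the inductive hypothesis, this set coincides with $\CL_{k-1}$ when $\Loc(\CS)^\flat\neq\emptyset$, and with $\CL_{k-1}^\sharp$ otherwise. Everything from step $d=2$ onward (the choice of $j_d$, $r_d$, $i_t^{(d)}$ and the distinguished action $i_0$) depends only on $\Loc_1$. Hence, when the two localizations agree, the reductions of $\CS_{k-1}$ and $\CL_{k-1}$ use the same $i_0$ and the same data for $d\ge 2$. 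Step $d=1$ does differ, since $\CS_{k-1}$ has extra transformations in the $a_1$-position. However, the new actions generated at $d=1$ carry $T_{i_t^{(1)},1}\neq T$ in the $a_1$-position, so they do not belong to $\Loc(\CS_k)$. For $\CL_{k-1}$ we have $r_1=0$, so no such actions appear. The new actions generated at $d\ge2$ keep $T$ in the $a_1$-position and are identical in both reductions. The modified action $i_0$ now has $I$ in the $a_1$-position, so it drops out of both localizations. Putting these together gives $\Loc(\CS_k)=\Loc(\CL_k)$, and since $\CL_k$ has only $T$ in the $a_1$-position apart from the $i_0$-drop, this is $\CL_k$ up to that one action. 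I will need to check this bookkeeping against the convention for $\CR_k$ (that the localization, not the $a_1$-removed action, is what is tracked), and then split into the $\sharp$ and $\flat$ parts to obtain (i) and (ii).

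The main obstacle is the case $\Loc(\CS)^\flat=\emptyset$, where the two localizations differ by the $\flat$ part. In that case $C_0^{(1)}$ consists entirely of $\sharp$ actions, so the $a_2$-position contains no $I$ and step $d=2$ counts the same distinct transformations in both $\CS_{k-1}$ and $\CL_{k-1}$. I must also show that no $\flat$ action is ever created. New actions arise only by replacing one non-identity entry of $S_{i_0}$ with some $T_{i_t^{(d)},d}$; for $d=2$ this entry is never $I$, because $I$ is absent from that position. So the $\flat$ part stays empty throughout, giving the second case of (ii), while (i) follows from the $\sharp$ matching above. The ``in particular'' statement then follows by taking unions: the two collections are equal exactly when the $\flat$ parts agree. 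That happens precisely when $\Loc(\CS)^\flat\neq\emptyset$, since otherwise $\CR_k(\Loc(\CS))^\flat$ is nonempty by construction.
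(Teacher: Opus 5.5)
There is a genuine gap, located exactly at the mechanism behind the $\sharp/\flat$ dichotomy. In the paper, $\Loc(\CS)$ is treated as a collection in the positions $a_2,\ldots,a_m$ only. The common factor $T^{a_1(n)}$ is absorbed into the common part, $\bar S_{i,n}=\prod_{j\geq 2}\bar T_{i,j}^{a_j(n)}\wt{U}_n$, so that $c_1(\Loc(\CS))=c_1-1$; this is what part (a) of \cref{arrows prop} uses. Hence the procedure for a localization starts at the $a_2$-position, and its distinguished action loses its $a_2$-entry, not its $a_1$-entry. Thus $\CR(\Loc(\CR_{k-1}(\CS)))$ contains $(\bar S_{1,n}\bar T_{1,2}^{-a_2(n)})_{n}$. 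This action still has $T$ in the $a_1$-position and has $I$ in the $a_2$-position, so it is a $\flat$ action. Your claim that the modified $i_0$-action ``has $I$ in the $a_1$-position, so it drops out of both localizations'' is therefore false on the $\CR_k(\Loc(\CS))$ side. Under your reading (stripping $a_1$), that action would be a $\sharp$ action of $\CR_k(\Loc(\CS))$ not lying in $\Loc(\CR_k(\CS))$, and (i) would fail. The bookkeeping you postpone (``check this against the convention for $\CR_k$'') is the actual content of the lemma. Your final assertion that $\CR_k(\Loc(\CS))^\flat\neq\emptyset$ ``by construction'' has no support in your argument, and it contradicts your drop-out claim.

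What is needed is the following comparison. Suppose the $a_2$-position of $\Loc(\CR_{k-1}(\CS))$ contains $I$ (by induction, iff $\Loc(\CS)^\flat\neq\emptyset$). Then the procedure for $\CR_{k-1}(\CS)$ counts $I$ as one of the classes at step $d=2$ (note the indicator in the definition of $r_d$ for $d>1$), say $\bar T_{i_1^{(2)},2}=I$. The procedure for the localization instead discards $I$ at its first step. So the data for $d\geq 2$ are not literally the same, as you assert; they match only after removing this identity class. The direction $\bar T_{1,2}\bar T_{i_1^{(2)},2}^{-1}=\bar T_{1,2}$ matches the leading direction of the localization's seminorm. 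The new action indexed by $(d,t)=(2,1)$ in $\CR(\CR_{k-1}(\CS))$ is exactly $\bar S_{1,n}\bar T_{1,2}^{-a_2(n)}$, which is why the two collections coincide in this case.

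If $I$ does not occur there, no $d=2$ new action is flat; that part of your argument is correct. But $\CR(\Loc(\CR_{k-1}(\CS)))$ still contains the flat stripped action, so the two collections differ by exactly one $\flat$ action (for $k\geq 1$). This is the source of the second case of (ii) and of the ``only if'' in the last sentence. In that case you must also relate the reduction of $\Loc(\CR_{k-1}(\CS))=\CR_{k-1}(\Loc(\CS))^\sharp$ to that of $\CR_{k-1}(\Loc(\CS))$, whose $a_2$-position does contain $I$ once $k\geq 2$. The counts at that position agree, and the $\sharp$ parts of the two reductions coincide, only because the first step of the procedure disregards the identity and the actions carrying it.
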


\begin{proof}[Proof of \cref{arrows prop} \ref{hard arrow} assuming \cref{last technical lemma}] 
We use induction on $c_1$. The base case $c_1=1$ holds trivially as the statement is void. For the induction step, let $c_1=m>1$ and suppose that the claim holds for $1,2,\ldots,m-1$. Let $\ell\in\N$ and $\CS=\CS_{\ell,m}$ be a collection of actions with complexity vector $\textbf{c}=(c_1,c_2,c_3)$, $c_3>1$. We want to show that $c_3(\CR_k(\CS))=1$ for some $k\in\N$. From the discussion right above Remark~\ref{same c_1,c_2 and transformation}, it is enough to show that $\ell(\Loc(\CR_k(\CS)))=1$ for some $k$. In view of \cref{last technical lemma}, since $\ell(\Loc(\CR_k(\CS)))\leq \ell(\CR_k(\Loc(\CS)))$, it suffices to prove that $\ell(\CR_k(\Loc(\CS)))=1$ for some $k$. It is not hard to see that $c_1(\Loc(\CS))=c_1-1$, hence, by induction hypothesis, there is some ${k_1}\in\N$ such that $c_3(\CR_{k_1}(\Loc(\CS)))=1$. Then, by \cref{arrows prop} \ref{easy arrow 1}, there is $k_2$ such that $c_2(\CR_{k_2}(\Loc(\CS)))=c_2-1$. By induction hypothesis again, there is $k_3$ such that $c_2(\CR_{k_3}(\Loc(\CS)))=c_2-1$ and $c_3(\CR_{k_3}(\Loc(\CS)))=1$, and now we apply \cref{last technical lemma} \ref{easy arrow 1} to find $k_4$ such that $c_2(\CR_{k_4}(\Loc(\CS)))=c_2-2$. Iterating this, we find $k'$ such that $c_2(\CR_{k'}(\Loc(\CS)))=1$. Now, we apply \cref{arrows prop} \ref{easy arrow 2} to find $k_1'$ such that $c_1(\CR_{k_1'}(\Loc(\CS)))=m-2$. Repeating this whole process, we can find $k_2'$ such that $c_1(\CR_{k_2'}(\Loc(\CS)))=m-3$, until we eventually find $k''$ such that $c_1(\CR_{k''}(\Loc(\CS)))=1$. This forces that $c_3(\CR_{k''}(\Loc(\CS)))=1$, so we may write $(1,c_2',1)$ for the complexity vector of $\CR_{k''}(\Loc(\CS))$. Using \cref{arrows prop} \ref{easy arrow 1}, we find $k_1''$ such that $\CR_{k_1''}(\Loc(\CS))$ has complexity vector $(1,c_2'-1,c_3')$, and since the first complexity parameter is $1$, this forces $c_3'=1$. Using \cref{arrows prop} \ref{easy arrow 1} once again, we find $k_2''$ such that $\CR_{k_2''}(\Loc(\CS))$ has complexity vector $(1,c_2'-2,c_3'')$, and as before, we reduce to $c_3''=1$. Repeating this, we eventually find $k$ such that $\CR_k(\Loc(\CS))$ has complexity vector $(1,1,1)$. It is straightforward to see that any such collection must have length $1$, that is, $\ell(\CR_k(\Loc(\CS)))=1$, concluding the induction step, thus, completing the proof.
\end{proof}

It remains to establish \cref{last technical lemma}. 

\begin{proof}[Proof of \cref{last technical lemma}]
Let $\ell\in\N$, $m\in\N$, and $\CS = \{S_i\colon i\in[\ell]\}$ be a collection of actions, where for each $i\in[\ell]$, $n\in\N$, 
$$S_{i,n}=\prod_{j=1}^mT_{i,j}^{a_j(n)}U_n,$$
with complexity vector $\textbf{c}=(c_1,c_2,c_3)$, where $c_1=m$, and we may assume that $c_2=c(a_1)$ and $c_3$ is attained by $T_{1,1}:=T$ (i.e., $i_0=1$ in the complexity reduction procedure). Thus, $\Loc(\CS)$ is precisely the subset of $\CS$ consisting of the actions $S_i$ such that $T_{i,1}=T$. By relabeling, we may assume that $\Loc(\CS)=\{S_i\colon i\in[c_3]\}$. Thus,
$$S_{i,n} = T^{a_1(n)}\prod_{j=2}^{m}T_{i,j}^{a_j(n)}U_n$$
for all $i\in[c_3]$.
We distinguish cases depending on whether $\Loc(\CS)^\flat$ is empty or not. In both cases, the proof is done by induction on $k\in\N_0$. The base case $k=0$ is trivial. 
For the induction step, let $k\in\N$, suppose that the claim holds for $k-1$, and we will show that it holds for $k$ as well. We write $\CR_{k-1}(\CS)=\{\bar{S}_i\colon i\in[\bar{\ell}]\}$, for some $\bar{\ell}\in\N$, and for simplicity, we denote its complexity vector by $\bar{\textbf{c}}=(\bar{c}_1,\bar{c}_2,\bar{c}_3)$. 
By Remark~\ref{same c_1,c_2 and transformation}, $\bar{c}_1=c_1=m$, $\bar{c}_2=c_2$, and $\bar{c}_3$ (which might be different from $c_3$) is attained by $T$. 
Then, by \cref{complexity reduction procedure thm} \eqref{item3} (iterated $k-1$ times), we can write,
$$\bar{S}_{i,n} = \prod_{j=1}^m \bar{T}_{i,j}^{a_j(n)}U_n,\ i\in[\bar{\ell}], \quad\text{and}\quad \bar{S}_{i,n} = T^{a_1(n)}\prod_{j=2}^m \bar{T}_{i,j}^{a_j(n)}U_n = \prod_{j=2}^m\bar{T}_{i,j}^{a_j(n)}\wt{U}_n,\ i\in[\bar{c}_3],$$
where, in view of \cref{complexity reduction procedure thm} \eqref{item1}, for each $j$, $\{\bar{T}_{i,j}\colon i\in[\bar{\ell}]\}\subseteq\{T_{i,j}\colon i\in[\ell]\}$, and $\bar{T}_{1,1}=T$.

Suppose that $\Loc(\CS)^\flat$ is empty. Then, $T_{i,2}\neq I$ for all $i\in[c_3]$. By induction hypothesis, $\Loc(\CR_{k-1}(\CS))^\sharp=\CR_{k-1}(\Loc(\CS))^\sharp$ and $\Loc(\CR_{k-1}(\CS))^\flat=\emptyset$, hence $\Loc(\CR_{k-1}(\CS))=\CR_{k-1}(\Loc(\CS))^\sharp$. It follows from the last equality that $\bar{T}_{i,2}\neq I$ for all $i\in[\bar{c}_3]$, since the transformations $\bar{T}_{i,2}$ with $i\in[\bar{c}_3]$ are exactly the ones in the $a_2$-position of $\Loc(\CR_{k-1}(\CS))$ which is equal to $\CR_{k-1}(\Loc(\CS))^\sharp$, and the latter consists of actions with non-identity transformations in the $a_2$-position by definition.
Let $\CF$ be any collection of functions $F_i\in L^\infty(\mu)$, $i\in[\ell]$, with $|F_i|=1$. The complexity reduction procedure for $(\CR_{k-1}(\CS),\CF)$ starts with the $a_1$-position, and once it is eliminated, we have bounded an $\Oh(1)$ power of $A_N(\CR_{k-1}(\CS);\CF)$ by 
\begin{multline}\label{finishing with a_1-position}
    \E_{\substack{k_{t,u_t}\in[K] \\ 0\leq t\leq r_1,\underline{u}\in\{0,1\}^{r_1+1}}}\lim_{N\to\infty}\E_{n\in[N]}\int_X \prod_{i=1}^{\bar{c}_3}\prod_{j=2}^m \bar{T}_{i,j}^{a_j(n)} \\
    \prod_{\underline{u}\in\{0,1\}^{r_1+1}}\bigg(T^{k_{0,u_0}}\prod_{t=1}^{r_1} \big(T\bar{T}_{i_t^{(1)},1}^{-1}\big)^{k_{t,u_t}}\bigg)C^{|\underline{u}|}F_i\ d\mu,
\end{multline}
for all $K\in\N$, where, by a slight abuse of notation, the parameters $r_1$, $(i_t^{(1)})_{t\in[r_1]}$ are associated with $\CR_{k-1}(\CS)$ (and not with $\CS$--see \cref{complexity reduction procedure subsection} to recall their definition). Now, we define $\Loc(\CF)=\{F_i\colon i\in[\bar{c}_3]\}$ and we run the complexity reduction procedure for $(\Loc(\CR_{k-1}(\CS)),\Loc(\CF))$, which starts with the $a_2$-position. Recalling that the actions in the collection are $T^{a_1(n)}\prod_{j=2}^m\bar{T}_{i,j}^{a_j(n)}$, $i\in[\bar{c}_3]$ (compare them to the action in \eqref{finishing with a_1-position}), and since $\bar{T}_{i,2}\neq I$ for all $i\in[\bar{c}_3]$, it follows that after the first application of \cref{cor_Katai variant}, the integral in the resulting bound has exactly the same actions as the integral in \eqref{finishing with a_1-position}. This means that from this point onwards, the complexity reduction procedure for $(\Loc(\CR_{k-1}(\CS)),\Loc(\CF))$ is exactly the same as that for $(\CR_{k-1}(\CS),\CF)$ after the elimination of the $a_1$-position. Consequently, by \cref{complexity reduction procedure thm} \eqref{item2}, the seminorm bound after the one application of the reduction for $(\CR_{k-1}(\CS),\CF)$ is
$$\nnorm{F_1}_{T,T\bar{T}_{i_1^{(1)},1}^{-1},\ldots,T\bar{T}_{i_{r_1}^{(1)},1}^{-1},\bar{T}_{1,2}\bar{T}_{i_1^{(2)},2}^{-1},\ldots,\bar{T}_{1,2}\bar{T}_{i_{r_2}^{(2)},2}^{-1},\ldots,\bar{T}_{1,D}\bar{T}_{i_1^{(D)},D}^{-1},\ldots,\bar{T}_{1,D}\bar{T}_{i_{r_D}^{(D)},D}^{-1}},$$
where, once again, by a slight abuse of notation, the parameters $(r_d)_{d\in[D]}$, $(i_t^{(d)})_{\substack{d\in[D] \\ t\in[r_d]}}$ are associated with $\CR_{k-1}(\CS)$ (and not with $\CS$).
We note that $1\leq i_t^{(d)}\leq \bar{c}_3$, for all $2\leq d\leq D$ and all $t\in[r_d]$, by definition of the complexity reduction procedure. On the other hand, by \cref{complexity reduction procedure thm} \eqref{item2} the seminorm bound after one application of the complexity reduction procedure for $(\Loc(\CR_{k-1}(\CS)),\Loc(\CF))$ is 
$$\nnorm{F_1}_{\bar{T}_{1,2},\bar{T}_{1,2}\bar{T}_{i_1^{(2)},2}^{-1},\ldots,\bar{T}_{1,2}\bar{T}_{i_{r_2}^{(2)},2}^{-1},\ldots,\bar{T}_{1,D}\bar{T}_{i_1^{(D)},D}^{-1},\ldots,\bar{T}_{1,D}\bar{T}_{i_{r_D}^{(D)},D}^{-1}}.$$
Thus, by \cref{complexity reduction procedure thm} \eqref{item3}, $\CR_k(\CS)=\CR(\CR_{k-1}(\CS))$ is
\begin{multline}\label{1st set}
    \CR_k(\CS) = \{\bar{S}_i\colon 2\leq i\leq \bar{\ell}\}\cup\{(\bar{S}_{1,n}T^{-a_1(n)})_{n\in\N}\} \\ 
    \cup \Bigg\{\Bigg(\bar{T}_{i_t^{(d)},d}^{a_d(n)}\prod_{\substack{j=1 \\ j\neq d}}^m \bar{T}_{1,j}^{a_j(n)}U_n\Bigg)_{n\in\N}\colon d\in[D], t\in[r_d]\Bigg\},
\end{multline}
while
\begin{multline}\label{2nd set}
    \CR(\Loc(\CR_{k-1}(\CS))) = \{\bar{S}_i\colon 2\leq i\leq \bar{c}_3,\}\cup\{(\bar{S}_{1,n}\bar{T}_{1,2}^{-a_2(n)})_{n\in\N}\} \\
    \cup \Bigg\{\Bigg(\bar{T}_{i_t^{(d)},d}^{a_d(n)}\prod_{\substack{j=1 \\ j\neq d}}^m \bar{T}_{1,j}^{a_j(n)}U_n\Bigg)_{n\in\N}\colon 2\leq d\leq D, t\in[r_d]\Bigg\}.
\end{multline}
Recalling that $1\leq i_t^{(d)}\leq \bar{c}_3$, for all $2\leq d\leq D$ and all $t\in[r_d]$, we have
\begin{equation}\label{3rd set}
    \Loc(\CR_k(\CS)) = \{\bar{S}_i\colon 2\leq i\leq \bar{c}_3\}\cup \Bigg\{\Bigg(\bar{T}_{i_t^{(d)},d}^{a_d(n)}\prod_{\substack{j=1 \\ j\neq d}}^m \bar{T}_{1,j}^{a_j(n)}U_n\Bigg)_{n\in\N}\colon 2\leq d\leq D, t\in[r_d]\Bigg\}.
\end{equation}
Since $\bar{T}_{i,2}\neq I$ for all $i\in[\bar{c}_3]$, we get $\Loc(\CR_k(\CS))^\flat=\emptyset$. Hence $\Loc(\CR_k(\CS))^\sharp=\Loc(\CR_k(\CS))$, and since the action $(\bar{S}_{1,n}\bar{T}_{1,2}^{-a_2(n)})_{n\in\N}$ has the transformation $I$ in the $a_2$-position, then this is the only action of $\CR(\Loc(\CR_{k-1}(\CS)))$ that is not in $\CR(\Loc(\CR_{k-1}(\CS)))^\sharp$. It follows that $\CR(\Loc(\CR_{k-1}(\CS)))^\sharp = \Loc(\CR_k(\CS))$.
Therefore, 
$$\Loc(\CR_k(\CS))^\sharp = \Loc(\CR_k(\CS)) = \CR(\Loc(\CR_{k-1}(\CS)))^\sharp = \CR(\CR_{k-1}(\Loc(\CS)))^\sharp=\CR_k(\Loc(\CS))^\sharp,$$
where the third equality follows from the induction hypothesis. This establishes the claim in the case where $\Loc(\CS)^\flat$ is empty.

Now suppose that $\Loc(\CS)^\flat$ is non-empty. The proof in this case follows the same overall strategy, with one important difference. To avoid unnecessary repetition, we omit some of the details and focus on the modifications required relative to the proof of the previous case. By the induction hypothesis, $\Loc(\CR_{k-1}(\CS))^\sharp=\CR_{k-1}(\Loc(\CS))^\sharp$ and $\Loc(\CR_{k-1}(\CS))^\flat=\CR_{k-1}(\Loc(\CS))^\flat$. As before, we consider a family of functions $\CF$ and we apply the complexity reduction procedure for $(\CR_{k-1}(\CS),\CF)$ to find that after the elimination of the $a_1$-position, we have bounded an $\Oh(1)$ power of $A_N(\CR_{k-1}(\CS);\CF)$ by \eqref{finishing with a_1-position}. The difference compared to the previous case is that there are some values of $i\in[\bar{c}_3]$ such that $\bar{T}_{i,2}=I$. Recall that, by the complexity reduction procedure, we may write $\{\bar{T}_{i,2}\colon i\in[\bar{c}_3]\} = \{\bar{T}_{i_1^{(2)},2},\ldots,\bar{T}_{i_{r_2}^{(2)},2}\}$, and then $\bar{T}_{i_t^{(2)},2}=I$ for a unique $t\in[r_2]$. Without loss of generality, we may assume that $t=1$, thus, $\bar{T}_{i_1^{(2)},2}=I$. In addition, we have $C_1^{(2)}=\{i\in C_0^{(1)}\colon \bar{T}_{i,2}=\bar{T}_{i_1^{(2)},2}\} = \{i\in[\bar{c}_3]\colon \bar{T}_{i,2}=I\}$. So, we may write the collection of actions in \eqref{finishing with a_1-position} as the following disjoint union
$$\Bigg\{\prod_{j=2}^m\bar{T}_{i,j}^{a_j(n)}\colon i\in[\bar{c}_3]\setminus C_1^{(2)}\Bigg\}\cup\Bigg\{\prod_{j=2}^m\bar{T}_{i,j}^{a_j(n)}\colon i\in C_1^{(2)}\Bigg\},$$
where the actions in the second set are exactly those with $I$ in the $a_2$-position. Hence, continuing to this position in the complexity reduction procedure, after the first application of \cref{cor_Katai variant}, the actions appearing in the integral in the resulting bound are exactly the ones in the first set above, and the first transformation of the $a_2$-position appearing in the seminorm bound is $\bar{T}_{1,2}$.
Now, let $\Loc(\CF)$ be as in the previous case and we run the complexity reduction procedure for $(\Loc(\CR_{k-1}(\CS)),\Loc(\CF))$. Writing
$$\Loc(\CR_{k-1}(\CS)) = \Bigg\{T^{a_1(n)}\prod_{j=2}^m\bar{T}_{i,j}^{a_j(n)}\colon i\in[\bar{c}_3]\setminus C_1^{(2)}\Bigg\}\cup\Bigg\{T^{a_1(n)}\prod_{j=2}^m\bar{T}_{i,j}^{a_j(n)}\colon i\in C_1^{(2)}\Bigg\},$$
it is straightforward to check that the complexity reduction procedure for  $(\Loc(\CR_{k-1}(\CS)),$ $\Loc(\CF))$ is identical to that of $(\CR_{k-1}(\CS),\CF)$ after the elimination of the $a_1$-position. Consequently, in view of \cref{complexity reduction procedure thm} \eqref{item2}, the seminorm bound for $(\CR_{k-1}(\CS),\CF)$ is 
$$\nnorm{F_1}_{T,T\bar{T}_{i_1^{(1)},1}^{-1},\ldots,T\bar{T}_{i_{r_1}^{(1)},1}^{-1},\bar{T}_{1,2},\bar{T}_{1,2}\bar{T}_{i_2^{(2)},2}^{-1},\ldots,\bar{T}_{1,2}\bar{T}_{i_{r_2}^{(2)},2}^{-1},\ldots,\bar{T}_{1,D}\bar{T}_{i_1^{(D)},D}^{-1},\ldots,\bar{T}_{1,D}\bar{T}_{i_{r_D}^{(D)},D}^{-1}},$$
while that for $(\Loc(\CR_{k-1}(\CS)),\Loc(\CF))$ is 
$$\nnorm{F_1}_{\bar{T}_{1,2},\bar{T}_{1,2}\bar{T}_{i_2^{(2)},2}^{-1},\ldots,\bar{T}_{1,2}\bar{T}_{i_{r_2}^{(2)},2}^{-1},\ldots,\bar{T}_{1,D}\bar{T}_{i_1^{(D)},D}^{-1},\ldots,\bar{T}_{1,D}\bar{T}_{i_{r_D}^{(D)},D}^{-1}}.$$
Thus, by \cref{complexity reduction procedure thm} \eqref{item3}, $\CR_k(\CS)$ is as in \eqref{1st set}, while
\begin{multline*}
    \CR(\Loc(\CR_{k-1}(\CS))) = \{\bar{S}_i\colon 2\leq i\leq \bar{c}_3\} \\
    \cup \Bigg\{\Bigg(\bar{T}_{i_t^{(d)},d}^{a_d(n)}\prod_{\substack{j=1 \\ j\neq d}}^m \bar{T}_{1,j}^{a_j(n)}U_n\Bigg)_{n\in\N}\colon 2\leq d\leq D, t\in[r_d]\Bigg\}.
\end{multline*}
Then, as in the previous case, $\Loc(\CR_k(\CS))$ is as in \eqref{3rd set}. Since $\bar{T}_{i_1^{(2)},2}=I$ and, by \cref{complexity reduction procedure thm} \eqref{item1}, $\bar{T}_{i,2}\neq I$ for all $i\in[\bar{c}_3]$, then
\begin{multline*}
    \Loc(\CR_k(\CS))^\sharp = \{\bar{S}_i\colon 2\leq i\leq \bar{c}_3, i\not\in C_1^{(2)}\}\cup\Bigg\{\Bigg(\bar{T}_{i_t^{(2)},2}^{a_2(n)}\prod_{\substack{j=1 \\ j\neq 2}}^m\bar{T}_{1,j}^{a_j(n)}U_n\Bigg)_{n\in\N}\colon 2\leq t\leq r_2\Bigg\} \\
    \cup\Bigg\{\Bigg(\bar{T}_{i_t^{(d)},d}^{a_d(n)}\prod_{\substack{j=1 \\ j\neq d}}^m\bar{T}_{1,j}^{a_j(n)}U_n\Bigg)_{n\in\N}\colon 3\leq d\leq D, 1\leq t\leq r_d\Bigg\}
    = \CR(\Loc(\CR_{k-1}(\CS)))^\sharp,
\end{multline*}
and 
$$\Loc(\CR_k(\CS))^\flat = \{\bar{S}_i\colon i\in C_1^{(2)}\}\cup\Bigg\{\Bigg(\bar{T}_{i_1^{(2)},2}^{a_2(n)}\prod_{\substack{j=1 \\ j\neq 2}}^m\bar{T}_{1,j}^{a_j(n)}U_n\Bigg)_{n\in\N}\Bigg\} = \CR(\Loc(\CR_{k-1}(\CS)))^\flat.$$
We conclude that
$$\Loc(\CR_k(\CS))^\sharp = \CR(\Loc(\CR_{k-1}(\CS)))^\sharp = \CR(\CR_{k-1}(\Loc(\CS)))^\sharp = \CR_k(\Loc(\CS))^\sharp,$$
and 
$$\Loc(\CR_k(\CS))^\flat = \CR(\Loc(\CR_{k-1}(\CS)))^\flat = \CR(\CR_{k-1}(\Loc(\CS)))^\flat = \CR_k(\Loc(\CS))^\flat.$$
This establishes the claim in the case $\Loc(\CS)^\flat$ is non-empty.

Since both cases of the claim have now been proven for $k$, the induction step is complete, and therefore the lemma follows.
\end{proof}

\bibliographystyle{abbrv}
\bibliography{refs}

\end{document}